\documentclass[11pt]{article}

\usepackage{amsmath, amsfonts, amssymb, amsthm,  graphicx, mathtools, enumerate,float}
\usepackage{mathrsfs}
\usepackage{enumitem}
\usepackage{blindtext}

\usepackage[blocks, affil-it]{authblk}

\usepackage[numbers, square]{natbib}
\usepackage[CJKbookmarks=true,
            bookmarksnumbered=true,
			bookmarksopen=true,
			colorlinks=true,
			citecolor=red,
			linkcolor=blue,
			anchorcolor=red,
			urlcolor=blue]{hyperref}
\usepackage[usenames]{color}

\usepackage[letterpaper, left=1.2truein, right=1.2truein, top = 1.2truein, bottom = 1.2truein]{geometry}
\usepackage[ruled, vlined, lined, commentsnumbered]{algorithm2e}

\usepackage{prettyref,soul}

\newtheorem{lemma}{Lemma}[section]
\newtheorem{proposition}{Proposition}[section]
\newtheorem{thm}{Theorem}[section]

\newtheorem{example}{Example}[section]
% \everymath{\displaystyle}

\newrefformat{eq}{(\ref{#1})}
\newrefformat{chap}{Chapter~\ref{#1}}
\newrefformat{sec}{Section~\ref{#1}}
\newrefformat{algo}{Algorithm~\ref{#1}}
\newrefformat{fig}{Fig.~\ref{#1}}
\newrefformat{tab}{Table~\ref{#1}}
\newrefformat{rmk}{Remark~\ref{#1}}
\newrefformat{clm}{Claim~\ref{#1}}
\newrefformat{def}{Definition~\ref{#1}}
\newrefformat{cor}{Corollary~\ref{#1}}
\newrefformat{lmm}{Lemma~\ref{#1}}
\newrefformat{lemma}{Lemma~\ref{#1}}
\newrefformat{prop}{Proposition~\ref{#1}}
\newrefformat{app}{Appendix~\ref{#1}}
\newrefformat{ex}{Example~\ref{#1}}
\newrefformat{exer}{Exercise~\ref{#1}}
\newrefformat{soln}{Solution~\ref{#1}}
\newrefformat{cond}{Condition~\ref{#1}}

 % the symbol P for probability used the sans serif letter
 % the symbol E for expectation used the sans serif letter
 % the symbol Cov for covariance used the sans serif letter
 % the symbol Var for covariance used the sans serif letter

 % bold Greek letter
 % bold Greek letter used for script

\def\text#1{\mbox{\rm #1}}

\def\X{\mathscr{X}}
\def\Z{\mathcal{Z}}

\DeclarePairedDelimiter{\ceil}{\lceil}{\rceil}

\newcommand{\argmin}{\mathop{\rm argmin}}
\newcommand{\argmax}{\mathop{\rm argmax}}

\newcommand{\indc}[1]{{\mathbf{1}_{\left\{{#1}\right\}}}}
\newcommand{\norm}[1]{\|{#1} \|}

\newcommand{\wh}{\widehat}
\newcommand{\wt}{\widetilde}

\newcommand{\R}{\mathbb{R}}

\newcommand{\floor}[1]{{\left\lfloor {#1} \right \rfloor}}

\title{Convergence Rates of Empirical Bayes Posterior Distributions: A Variational Perspective
%\thanks{funding}
}
% \iffalse
\author{Fengshuo Zhang}
\author{Chao Gao}
\affil{University of Chicago
}

\begin{document}
\maketitle

\begin{abstract}
We study the convergence rates of empirical Bayes posterior distributions for nonparametric and high-dimensional inference. We show that as long as the hyperparameter set is discrete, the empirical Bayes posterior distribution induced by the maximum marginal likelihood estimator can be regarded as a variational approximation to a hierarchical Bayes posterior distribution. This connection between empirical Bayes and variational Bayes allows us to leverage the recent results in the variational Bayes literature \citep{alquier2020concentration,yang2020alpha,zhang2017convergence}, and directly obtains the convergence rates of empirical Bayes posterior distributions from a variational perspective. For a more general hyperparameter set that is not necessarily discrete, we introduce a new technique called ``prior decomposition" to deal with prior distributions that can be written as convex combinations of probability measures whose supports are low-dimensional subspaces. This leads to generalized versions of the classical ``prior mass and testing" conditions for the convergence rates of empirical Bayes. Our theory is applied to a number of statistical estimation problems including nonparametric density estimation and sparse linear regression.

\smallskip

%\textbf{Keywords.}  posterior contraction, mean-field variational inference, density estimation, Gaussian sequence model, piecewise constant model, empirical Bayes
\end{abstract}
% \begin{keyword}[class=AMS]
% \kwd[Primary ]{62H12}
% \kwd[; secondary ]{62C20}
% \end{keyword}
% \begin{keyword}
% \kwd{Convex programming, group Lasso, Minimax rates, Rates of convergence, Sparse CCA (SCCA)}
% \end{keyword}

% \end{frontmatter}

%[to be filled]

\section{Introduction}

Given a likelihood function $p(X|\theta)$ and a prior $\theta\sim \Pi_{\lambda}$, the posterior distribution can be calculated via the Bayes formula
\begin{equation}
d\Pi_{\lambda}(\theta|X)\propto p(X|\theta)d\Pi_{\lambda}(\theta). \label{eq:basic-lambda-setting}
\end{equation}
In many statistical estimation problems, the prior is usually indexed by a hyperparameter $\lambda$ that controls the regularity of the distribution. To achieve minimax optimal estimation error from a frequentist perspective, the hyperparameter $\lambda$ should be selected according to the underlying structure of the data generating process. One popular selection method is the empirical Bayes principle. That is, find
\begin{equation}
\wh{\lambda} = \argmax_{\lambda\in\Lambda}\log\left[w(\lambda)\int p(X|\theta)d\Pi_{\lambda}(\theta)\right], \label{eq:MMLE-EB}
\end{equation}
and then use the distribution $\Pi_{\wh{\lambda}}(\cdot|X)$ for posterior inference. The formula (\ref{eq:MMLE-EB}) is known as the maximum marginal likelihood estimator (MMLE) when $w(\lambda)=1$ for all $\lambda\in\Lambda$. One can also use a more general weight function $w(\lambda)$ to reflect the prior knowledge of the space $\Lambda$. In this paper, we study frequentist properties of the empirical Bayes posterior distribution $\Pi_{\wh{\lambda}}(\cdot|X)$ by assuming a frequentist data generating process $X\sim P^*$.

Our main approach relies on the recent progress in the theoretical analysis of variational Bayes posterior distributions \citep{alquier2020concentration,yang2020alpha,zhang2017convergence}. Given a variational class $\mathcal{S}$, a set of distributions, the variational approximation to a posterior distribution $\Pi(\cdot|X)$ is defined by
\begin{equation}
\wh{Q}=\argmin_{Q\in\mathcal{S}}D(Q\|\Pi(\cdot|X)), \label{eq:VB}
\end{equation}
where $D(\cdot\|\cdot)$ is the Kullback-Leibler divergence. The data-dependent probability measure $\wh{Q}$ is called variational posterior distribution, and is widely used in the context of machine learning and complex high-dimensional models because of the potential computational intractability of the posterior distribution \citep{blei2017variational}. The recent work \cite{alquier2020concentration,yang2020alpha,zhang2017convergence} formulate conditions on prior, likelihood, and variational class, under which convergence rates of variational posterior distributions can be established in general settings of nonparametric and high-dimensional estimation. In addition, it is observed in \cite{zhang2017convergence} that for a very special class of models, the empirical Bayes posterior distribution can be regarded as a variational posterior distribution for a specific choice of $\mathcal{S}$ in (\ref{eq:VB}). It is further suggested by \cite{zhang2017convergence} that this connection between variational Bayes and empirical Bayes may lead to results on the convergence rates of the empirical Bayes posterior as well.

In this paper, we follow the suggestion of \cite{zhang2017convergence} and formally establish this connection between variational Bayes and empirical Bayes in a more general setting. As a result, the theoretical properties of the variational posterior proved in \cite{alquier2020concentration,yang2020alpha,zhang2017convergence} are easily applied to establish the convergence rates of the empirical Bayes posterior distributions. We show that as long as the hyperparameter set $\Lambda$ is discrete and $\sum_{\lambda\in\Lambda}w(\lambda)<\infty$, there exists a variational class $\mathcal{S}_{\rm EB}$, such that the empirical Bayes posterior $\Pi_{\wh{\lambda}}(\cdot|X)$ can be equivalently written as (\ref{eq:VB}) with $\mathcal{S}=\mathcal{S}_{\rm EB}$. The posterior distribution $\Pi(\cdot|X)$ in (\ref{eq:VB}) is given by $d\Pi(\cdot|X)\propto p(X|\theta)d\Pi(\theta)$, where
$$\Pi=\frac{\sum_{\lambda\in\Lambda}w(\lambda)\Pi_{\lambda}}{\sum_{\lambda\in\Lambda}w(\lambda)}.$$
In other words, the empirical Bayes posterior $\Pi_{\wh{\lambda}}(\cdot|X)$ can be regarded as a variational approximation to a hierarchical Bayes posterior distribution. This connection automatically makes the results on the variational posterior convergence rates directly applicable to a general class of empirical Bayes posterior distributions. Moreover, since the conditions of \cite{alquier2020concentration,yang2020alpha,zhang2017convergence} are formulated in the classical ``prior mass and testing" style \citep{schwartz1965bayes,barron1988exponential,barron1999consistency,ghosal1999posterior,shen2001rates,ghosal2000convergence,ghosal2007convergence}, the obtained theory of empirical Bayes posterior uses the same set of conditions, and thus can be easily verified in many important nonparametric and high-dimensional estimation problems.

In addition to the theory of discrete $\Lambda$, we also propose a method of ``prior decomposition" to analyze the empirical Bayes posterior when the hyperparameter set $\Lambda$ is continuous (uncountable). For a collection of priors $\{\Pi_{\lambda}:\lambda\in\Lambda\}$ that can be decomposed as convex combinations of probability distributions whose supports are low-dimensional subspaces, we extend the ``prior mass and testing" conditions in \cite{schwartz1965bayes,barron1988exponential,barron1999consistency,ghosal1999posterior,shen2001rates,ghosal2000convergence,ghosal2007convergence} to derive sharp convergence rates for the empirical Bayes posterior distributions. The theory is applied to a number of high-dimensional estimation problems including the popular spike-and-slab priors for sparse estimation.

\paragraph{Connection to the literature.}

Convergence rates of empirical Bayes posterior distributions have been investigated in a number of settings in the literature. This includes the selection of hyperparameters of the spike-and-slab prior \citep{johnstone2004needles,castillo2018empirical,castillo2020spike,castillo2018spike} and the horseshoe prior \citep{van2014horseshoe,van2017uncertainty} for sparse sequence model, the scaling of Gaussian processes \citep{szabo2013empirical,szabo2015frequentist,knapik2016bayes,sniekers2015adaptive} for both nonparametric regression and inverse problems, and empirical Bayesian model selection \citep{rousseau2016asymptotic} for sieve priors. Theoretical properties of $\Pi_{\wh{\lambda}}(\cdot|X)$ in general settings have also been established by \cite{petrone2014bayes,donnet2018posterior,rousseau2017asymptotic}. In particular, \cite{petrone2014bayes} studied the asymptotic behavior of $\Pi_{\wh{\lambda}}(\cdot|X)$ for general parametric models, while \cite{donnet2018posterior} provides sufficient conditions for the convergence rates of $\Pi_{\wh{\lambda}}(\cdot|X)$ in nonparametric settings when $\wh{\lambda}$ is known to belong to a set $\Lambda_0$ that has nice properties. Perhaps the most general result for nonparametric models is the work \cite{rousseau2017asymptotic}. Sufficient conditions were formulated by \cite{rousseau2017asymptotic} to prove $\wh{\lambda}\in\Lambda_0$ with high probability with $\Lambda_0=\{\lambda:\epsilon(\lambda)\lesssim \epsilon_0\}$, where $\epsilon(\lambda)$ is understood to be the convergence rate of the posterior distribution $\Pi_{\lambda}(\cdot|X)$, and $\epsilon_0$ is the convergence rate to be established for the empirical Bayes posterior. The result $\wh{\lambda}\in\Lambda_0$, together with a standard analysis for $\sup_{\lambda\in\Lambda_0}\Pi_{\lambda}(\cdot|X)$, leads to the desired convergence rate for $\Pi_{\wh{\lambda}}(\cdot|X)$.

Despite the generality of \cite{rousseau2017asymptotic}, checking the sufficient conditions that lead to $\wh{\lambda}\in\Lambda_0$ are usually quite difficult. For each example of applications, one needs to first construct a very specific measure that is not necessarily a probability distribution, and then establish a testing error condition under this measure. In comparison, the classical ``prior mass and testing" conditions \citep{schwartz1965bayes,barron1988exponential,barron1999consistency,ghosal1999posterior,shen2001rates,ghosal2000convergence,ghosal2007convergence} for posterior convergence rates work with the likelihood directly and the conditions are much more straightforward to check. The conditions in our theory, derived from a variational approximation perspective, are almost identical to the classical ``prior mass and testing" conditions. This leads to some significant simplifications of \cite{rousseau2017asymptotic} when applying the theory to specific examples. Moreover, for the more general continuous hyperparameter set, the proposed method of ``prior decomposition" leads to conditions that can be applied to a number of high-dimensional models with unbounded parameter spaces. For these examples, we believe the theory of \cite{rousseau2017asymptotic}  will lead to unnecessary logarithmic factors for the convergence rates because of the unboundedness of the model parameters.

Last but not least, let us emphasize that this paper only deals with empirical Bayes procedures defined by the MMLE (\ref{eq:MMLE-EB}). Sometimes the terminology is also used for general data-dependent probability measures that summarize the information of likelihood and prior. For this line of research, we refer the readers to \cite{martin2014asymptotically,martin2017empirical,martin2019data,belitser2019empirical,belitser2019general} and references therein.

\paragraph{Paper organization.} The rest of the paper is organized as follows. In Section \ref{sec:VB}, we review the recent theory of convergence rates for variational posterior distributions. Then, we formally establish the connection between empirical Bayes and variational Bayes in Section \ref{sec:EB-VB} and derive the convergence rates of empirical Bayes posterior distributions in the same section. The result is generalized to continuous hyperparameter set and unbounded parameter space in Section \ref{sec:gen}. Finally in Section \ref{sec:pf}, additional proofs of all technical results are presented.

\paragraph{Notation.} For an integer $d$, we use $[d]$ to denote the set $\{1,2,...,d\}$. Given two numbers $a,b\in\mathbb{R}$, we use $a\vee b=\max(a,b)$ and $a\wedge b=\min(a,b)$. We also write $a_+=\max(a,0)$. For two positive sequences $\{a_n\},\{b_n\}$, $a_n\lesssim b_n$ or $a_n=O(b_n)$ means $a_n\leq Cb_n$ for some constant $C>0$ independent of $n$, $a_n=\Omega(b_n)$ means $b_n=O(a_n)$, and $a_n\asymp b_n$ means $a_n\lesssim b_n$ and $b_n\lesssim a_n$. We also write $a_n=o(b_n)$ when $\limsup_n\frac{a_n}{b_n}=0$. For a set $S$, we use $\indc{S}$ to denote its indicator function and $|S|$ to denote its cardinality. For a vector $v\in\mathbb{R}^d$, its norms are defined by $\norm{v}_1=\sum_{i=1}^d|v_i|$, $\norm{v}^2=\sum_{i=1}^dv_i^2$ and $\norm{v}_{\infty}=\max_{1\leq i\leq d}|v_i|$. Given two probability distributions $P$ and $Q$ and $\rho>1$, the $\rho$-R\'{e}nyi divergence is defined by $D_{\rho}(P\|Q)=\frac{1}{\rho-1}\log\int\left(\frac{dP}{dQ}\right)^{\rho-1}dP$. The Kullback-Leibler divergence is defined by $D(P\|Q)=\int \log\left(\frac{dP}{dQ}\right)dP$, and the Hellinger distance is defined by $H(P,Q)=\sqrt{\frac{1}{2}\int(\sqrt{dP}-\sqrt{dQ})^2}$. The notation $\mathbb{P}$ and $\mathbb{E}$ are used for generic probability and expectation whose distribution is determined from the context.

\section{Preliminaries on Variational Posterior Convergence}\label{sec:VB}

Assume the observation $X$ is generated from a probability measure $P^*$, and $\wh{Q}$ is the variational posterior distribution defined by (\ref{eq:VB}). It is a fundamental question whether the data-dependent measure $\wh{Q}$ can learn the data generating process $P^*$. The convergence of $\wh{Q}$ was established by \cite{wang2019frequentist} for parametric models. For nonparametric settings, this question has been recently investigated by three independent papers \citep{alquier2020concentration,yang2020alpha,zhang2017convergence}. The main result of this line of work can be summarized as the following theorem.

\begin{thm}[\cite{alquier2020concentration,yang2020alpha,zhang2017convergence}]\label{thm:VB}
Consider a non-negative loss function $L(\cdot,\cdot)$ and a rate $\epsilon_*\geq 1$. Let $C, C_1, C_2, C_3>0$ be constants such that $C>C_2+C_3+2$. We assume
\begin{itemize}
\item For any $\epsilon\geq \epsilon_*$, there exists a set $\mathcal{F}$ and a testing function $\phi$, such that
\begin{equation}
P^*\phi + \sup_{\theta\in\mathcal{F}:L(P^*,P_{\theta})\geq C_1\epsilon^2}P_{\theta}(1-\phi) \leq \exp\left(-C\epsilon^2\right). \label{eq:con-test}
\end{equation}
\item For any $\epsilon \geq \epsilon_*$, the set $\mathcal{F}$ above satisfies
\begin{equation}
\Pi(\mathcal{F}^c)\leq\exp\left(-C\epsilon^2\right). \label{eq:con-sieve}
\end{equation}
\item For some constant $\rho>1$,
\begin{equation}
\Pi\left(D_{\rho}(P^*\|P_{\theta})\leq C_3\epsilon_*^2\right)\geq \exp\left(-C_2\epsilon_*^2\right). \label{eq:con-pm}
\end{equation}
\end{itemize}
Then, for the variational posterior defined in (\ref{eq:VB}), we have
$$P^*\wh{Q}L(P^*,P_{\theta})\leq M(\epsilon_*^2 + \gamma^2),$$
for some constant $M>0$ only depending on $C,C_1$ and $\rho$, where the quantity $\gamma^2$ is defined as
$$\gamma^2=\inf_{Q\in\mathcal{S}}P^*D(Q\|\Pi(\cdot|X)).$$
\end{thm}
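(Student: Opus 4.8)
The plan is to bound the expected loss under the variational posterior by comparing it to a carefully chosen reference distribution $Q$ in the variational class $\mathcal{S}$, and then use the defining optimality of $\wh{Q}$ together with a change-of-measure argument. The key object is the evidence lower bound (ELBO) decomposition: for any $Q \in \mathcal{S}$, the KL divergence to the true posterior factors as $D(Q\|\Pi(\cdot|X)) = -\int \log p(X|\theta)\, dQ(\theta) + D(Q\|\Pi) + \log p(X)$, where $p(X) = \int p(X|\theta)\, d\Pi(\theta)$ is the marginal. Since the $\log p(X)$ term does not depend on $Q$, minimizing $D(Q\|\Pi(\cdot|X))$ over $\mathcal{S}$ is equivalent to minimizing $-\int \log p(X|\theta)\, dQ + D(Q\|\Pi)$. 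I would exploit this by plugging the minimizer $\wh{Q}$ into this expression and comparing it against the reference $Q^\dagger$ achieving (or nearly achieving) $\gamma^2$.

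The core of the argument uses a Donsker--Varadhan-type variational representation together with the testing function $\phi$ from assumption (\ref{eq:con-test}). First I would split $P^*\wh{Q}L(P^*,P_\theta)$ using the test: on the event where $\phi$ fires we control things by $P^*\phi \le \exp(-C\epsilon^2)$, and on the complement we localize to $\theta \in \mathcal{F}$ with $L(P^*,P_\theta) \ge C_1\epsilon^2$ (the sieve complement $\mathcal{F}^c$ being handled by (\ref{eq:con-sieve})). The heart of the proof is to show that $\wh{Q}$ cannot place too much mass on the ``bad'' region $\{\theta : L(P^*,P_\theta) \ge C_1\epsilon^2\}$. This is where the optimality of $\wh{Q}$ enters: because $\wh{Q}$ minimizes the ELBO objective, one derives an inequality of the form
\begin{equation}
\int L(P^*,P_\theta)\, d\wh{Q}(\theta) \lesssim \gamma^2 + \frac{1}{n}\log\frac{1}{\wh{Q}\text{-mass on good set}} + (\text{test/sieve error terms}).\nonumber
\end{equation}
The Rényi divergence condition (\ref{eq:con-pm}) is the ingredient that lower-bounds the denominator $p(X)$ (the normalizing constant / marginal likelihood) in expectation under $P^*$, via a Fubini argument and the fact that $\E_{P^*} \exp(-(\rho-1) D_\rho(\cdot))$ is controlled. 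This controls the $\log p(X)$ term so that the prior-mass assumption translates into the $\epsilon_*^2$ rate.

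Concretely, the steps I would carry out in order are: (i) write the ELBO identity and use the defining inequality $P^*D(\wh{Q}\|\Pi(\cdot|X)) \le \inf_{Q\in\mathcal{S}} P^*D(Q\|\Pi(\cdot|X)) = \gamma^2 + \E_{P^*}\log p(X)$ modulo the common constant; (ii) apply the Donsker--Varadhan inequality to transfer the bound on $\wh{Q}$ into a bound on $\wh{Q}L(P^*,P_\theta)$, introducing an auxiliary exponential moment $\int \exp(t L(P^*,P_\theta) - \ell(\theta))\, d\Pi(\theta)$ where $\ell$ is the log-likelihood ratio; (iii) bound that exponential moment in $P^*$-expectation by decomposing over $\mathcal{F}$ versus $\mathcal{F}^c$ and the test $\phi$, invoking (\ref{eq:con-test}) and (\ref{eq:con-sieve}) so that the integrand is $\exp(-C\epsilon^2)$ small outside a neighborhood of $P^*$; and (iv) combine with the prior-mass lower bound (\ref{eq:con-pm}) to absorb the $\log p(X)$ term, then integrate the tail bound over $\epsilon \ge \epsilon_*$ to obtain the $M(\epsilon_*^2 + \gamma^2)$ rate. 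The constant constraint $C > C_2 + C_3 + 2$ is exactly what makes the geometric-type sum over dyadic scales of $\epsilon$ converge.

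The main obstacle I anticipate is step (iii): controlling the exponential moment of the likelihood ratio \emph{simultaneously} over the whole bad region while keeping the testing and sieve errors from accumulating. The subtlety is that the Donsker--Varadhan bound produces a $\log$ of an integral against the \emph{prior} $\Pi$, and one must carefully balance the Rényi parameter $\rho$ against the test's exponential rate $C$ so that the change of measure from $P_\theta$ to $P^*$ does not lose the factor gained from (\ref{eq:con-test}). Getting the constants to line up — ensuring the exponent $-C\epsilon^2$ survives after paying for the prior mass $\exp(-C_2\epsilon_*^2)$ and the Rényi slack $\exp(C_3\epsilon_*^2)$ — is precisely the delicate bookkeeping encoded in the hypothesis $C > C_2 + C_3 + 2$, and this is where I would expect the real work to lie.
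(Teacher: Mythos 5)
The paper does not prove this theorem itself; it imports it verbatim as Theorem 2.1 of \cite{zhang2017convergence}, so there is no in-paper proof to compare against. Your outline follows essentially the same route as the proof in that reference: the KL/ELBO decomposition together with the pointwise (in $X$) optimality of $\wh{Q}$ to reduce the problem to $\gamma^2$, the Donsker--Varadhan (compression) inequality to convert the KL bound into a bound on $\wh{Q}L(P^*,P_{\theta})$ at the price of an exponential moment under $\Pi(\cdot|X)$, the test-and-sieve splitting of the numerator of that moment, and the R\'enyi-neighborhood lower bound on the normalizing constant (the same Markov-plus-Jensen argument the present paper uses for the event $H$ in the proof of its Theorem 4.1). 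The only caveat is that yours is a plan rather than a proof: the quantitative step you yourself flag as ``where the real work lies'' --- bounding $P^*\log\int e^{tL}\,d\Pi(\cdot|X)$ by peeling the bad region into shells $\{\theta: j\epsilon_*^2\le L(P^*,P_{\theta})<(j+1)\epsilon_*^2\}$ and verifying that $C>C_2+C_3+2$ makes the resulting sum converge after paying the $\exp((C_2+C_3)\epsilon_*^2)$ cost of the denominator --- is precisely the content of the cited proof, and you have correctly identified both the mechanism and the role of each hypothesis.
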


The above theorem can be found as Theorem 2.1 in \cite{zhang2017convergence}. Similar conclusions have also been obtained in \cite{alquier2020concentration,yang2020alpha} independently. Theorem \ref{thm:VB} shows that the convergence rate of a variational posterior distribution can be established under almost the same set of prior mass and testing conditions \citep{schwartz1965bayes,barron1988exponential,barron1999consistency,ghosal1999posterior,shen2001rates,ghosal2000convergence,ghosal2007convergence} that lead to the convergence rates of the true posterior $\Pi(\cdot|X)$. The influence of the variational approximation is characterized by the additional term $\gamma^2$ in the error bound. Ideally, one would like to establish the additional inequality $\gamma^2\lesssim \epsilon_*^2$ so that the variational posterior enjoys the same frequentist convergence rate as the true posterior. This goal can be achieved by the following proposition.

\begin{proposition}\label{prop:check-gamma}
Suppose there exists some distribution $Q\in\mathcal{S}$ such that
\begin{eqnarray}
\label{eq:AR1} D(Q\|\Pi) &\leq& C_1'\epsilon_*^2, \\
\label{eq:AR2} QD(P^*\|P_{\theta}) &\leq& C_2'\epsilon_*^2,
\end{eqnarray}
for some constants $C_1',C_2'>0$. Then, we have $\gamma^2\leq (C_1'+C_2')\epsilon_*^2$.
\end{proposition}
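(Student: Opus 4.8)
The plan is to prove the bound on $\gamma^2$ by exhibiting a single feasible competitor: since $\gamma^2=\inf_{Q\in\mathcal{S}}P^*D(Q\|\Pi(\cdot|X))$ is an infimum over $\mathcal{S}$, it suffices to plug in the particular $Q$ guaranteed by the hypothesis and to show that $P^*D(Q\|\Pi(\cdot|X))\le(C_1'+C_2')\epsilon_*^2$. The main tool is the standard ``evidence'' decomposition of the KL divergence to a posterior distribution. Writing $d\Pi(\theta|X)=p(X|\theta)\,d\Pi(\theta)/Z(X)$ with marginal likelihood $Z(X)=\int p(X|\theta')\,d\Pi(\theta')$, the chain rule for Radon--Nikodym derivatives gives $\frac{dQ}{d\Pi(\cdot|X)}(\theta)=\frac{dQ}{d\Pi}(\theta)\cdot\frac{Z(X)}{p(X|\theta)}$, and hence the pointwise (in $X$) identity
\begin{equation*}
D(Q\|\Pi(\cdot|X))=D(Q\|\Pi)+\log\frac{Z(X)}{p^*(X)}+\int\log\frac{p^*(X)}{p(X|\theta)}\,dQ(\theta),
\end{equation*}
where I have inserted the true density $p^*$ of $P^*$ in order to organize the likelihood contribution into recognizable divergences.

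Next I would take the expectation under $X\sim P^*$ and interchange it with the $Q$-integral in the last term (justified by Fubini, with integrability supplied by the second hypothesis). The last term then becomes $\int D(P^*\|P_\theta)\,dQ(\theta)=Q\,D(P^*\|P_\theta)$; the first term is free of $X$ and stays $D(Q\|\Pi)$; and the middle term becomes $P^*\log\frac{Z(X)}{p^*(X)}=-D(P^*\|P_Z)$, where $P_Z$ is the marginal distribution whose density is $Z(\cdot)$, a genuine probability density since each $p(\cdot|\theta)$ integrates to one and so does its $\Pi$-mixture.

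The single useful observation, and the only place where the argument could slip, is that this middle term is a \emph{negative} KL divergence, hence nonpositive, so it may simply be discarded. This yields $P^*D(Q\|\Pi(\cdot|X))=D(Q\|\Pi)-D(P^*\|P_Z)+Q\,D(P^*\|P_\theta)\le D(Q\|\Pi)+Q\,D(P^*\|P_\theta)$, and invoking the two hypotheses bounds the right-hand side by $(C_1'+C_2')\epsilon_*^2$. Taking the infimum over $\mathcal{S}$ on the left then gives $\gamma^2\le(C_1'+C_2')\epsilon_*^2$, completing the proof. I anticipate no substantive difficulty: the work is entirely in the algebra of the decomposition and in checking that $Z$ is a bona fide density, so that dropping $D(P^*\|P_Z)\ge0$ is legitimate; the interchange of integration and the finiteness of every term follow from the stated hypotheses.
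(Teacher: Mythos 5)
Your argument is correct and is exactly the standard one: the paper does not reprove this proposition but attributes it to \cite{alquier2020concentration} and \cite{zhang2017convergence}, where the same decomposition $P^*D(Q\|\Pi(\cdot|X))=D(Q\|\Pi)-D(P^*\|P_Z)+QD(P^*\|P_\theta)$ and the observation that the middle term is a nonpositive $-D(P^*\|P_Z)$ constitute the proof. No gaps beyond the routine measure-theoretic caveats you already flag (absolute continuity from $D(Q\|\Pi)<\infty$, and Fubini via the fact that the negative part of $\log(p^*/p_\theta)$ is integrable since $\int(\log\frac{p_\theta}{p^*})_+\,dP^*\leq 1$).
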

The two conditions in Proposition \ref{prop:check-gamma} are first formulated by \cite{alquier2020concentration}, and are slightly extended in \cite{zhang2017convergence}. These two conditions are very easy to work with. As a first example, one can check that that when the variational class $\mathcal{S}$ is the set of all distributions so that $\wh{Q}=\Pi(\cdot|X)$, the two conditions automatically hold and thus the result of Theorem \ref{thm:VB} is reduced to the convergence rate of the true posterior distribution \citep{shen2001rates,ghosal2000convergence,ghosal2007convergence}. To see this, one can use the conditioning method and define a distribution $Q$ by
$$Q(B)=\frac{\Pi(B\cap K)}{\Pi(K)},$$
for any measurable set $B$. The set $K$ is set as the Kullback-Leibler neighborhood,
$$K=\left\{\theta: D(P^*\|P_{\theta})\leq C_3\epsilon_*^2\right\}.$$
It is clear that $Q\in\mathcal{S}$. Since $Q$ is supported on $K$, we must have $QD(P^*\|P_{\theta})\leq C_3\epsilon_*^2$. Moreover, by the condition (\ref{eq:con-pm}) and the fact that $D(P^*\|P_{\theta})\leq D_{\rho}(P^*\|P_{\theta})$ for all $\rho>1$, we have $\Pi(K)\geq\exp(-C_2\epsilon_*^2)$. This leads to the bound $D(Q\|\Pi)=\log\frac{1}{\Pi(K)}\leq C_2\epsilon_*^2$. Hence, (\ref{eq:AR1}) and (\ref{eq:AR2}) hold with $C_1'=C_2$ and $C_2'=C_3$ for the same constants $C_2$ and $C_3$ in (\ref{eq:con-pm}). This example shows that the prior mass condition (\ref{eq:con-pm}) alone suffices to guarantee (\ref{eq:AR1}) and (\ref{eq:AR2}) when $\mathcal{S}$ is the set of all distributions.

When $\mathcal{S}$ is not necessarily the the entire set of distributions, one needs to generalize the prior mass condition (\ref{eq:con-pm}) to accommodate the additional structure imposed by the variational class $\mathcal{S}$. We consider a concrete example of a mean-field class,
$$\mathcal{S}_{\rm MF}=\left\{Q: dQ(\theta)=\prod_{j=1}^ddQ_j(\theta_j)\right\}.$$
The convergence rate of the mean-field variational posterior was established by Theorem 2.4 in \cite{zhang2017convergence}, and we state this result below.
\begin{thm}\label{thm:mf}
Under the same setting of Theorem \ref{thm:VB}, assume the prior distribution satisfies $d\Pi(\theta)=\prod_{j=1}^dd\Pi_j(\theta_j)$. Suppose the conditions (\ref{eq:con-test}), (\ref{eq:con-sieve}) and (\ref{eq:con-pm}) hold. Furthermore, there exists a rectangular subset $\otimes_{j=1}^d\Theta_j\subset\left\{\theta: D(P_{\theta^*}\|P_{\theta})\leq C_4\epsilon_*^2\right\}$ such that
\begin{equation}
\Pi\left(\otimes_{j=1}^d\Theta_j\right) \geq \exp\left(-C_5\epsilon_*^2\right), \label{eq:rec-pm}
\end{equation}
for some constants $C_4,C_5>0$. Then, for the variational posterior defined in (\ref{eq:VB}) with $\mathcal{S}=\mathcal{S}_{\rm MF}$, we have
$$P^*\wh{Q}L(P^*,P_{\theta})\leq M\epsilon_*^2,$$
for some constant $M>0$ only depending on $C,C_1,C_4,C_5$ and $\rho$.
\end{thm}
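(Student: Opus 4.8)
The plan is to invoke Theorem \ref{thm:VB} with the variational class $\mathcal{S}=\mathcal{S}_{\rm MF}$ and then show that the approximation error $\gamma^2$ is of order $\epsilon_*^2$. Since the testing condition (\ref{eq:con-test}), the sieve condition (\ref{eq:con-sieve}), and the prior mass condition (\ref{eq:con-pm}) are all assumed, Theorem \ref{thm:VB} immediately yields $P^*\wh{Q}L(P^*,P_\theta)\le M(\epsilon_*^2+\gamma^2)$ with $\gamma^2=\inf_{Q\in\mathcal{S}_{\rm MF}}P^*D(Q\|\Pi(\cdot|X))$. It therefore remains to exhibit a single mean-field distribution $Q\in\mathcal{S}_{\rm MF}$ verifying the two hypotheses (\ref{eq:AR1}) and (\ref{eq:AR2}) of Proposition \ref{prop:check-gamma}, which then gives $\gamma^2\lesssim\epsilon_*^2$ and hence the claimed bound.

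The candidate distribution is obtained by conditioning the product prior coordinatewise onto the rectangle $\otimes_{j=1}^d\Theta_j$. Concretely, I would set $dQ_j=\frac{\indc{\Theta_j}}{\Pi_j(\Theta_j)}\,d\Pi_j$ for each $j\in[d]$ and define $Q=\prod_{j=1}^d Q_j$. By construction $Q$ is a product measure, so $Q\in\mathcal{S}_{\rm MF}$, and it is supported on $\otimes_{j=1}^d\Theta_j$. The rectangular structure is exactly what makes this work: because the prior $\Pi=\prod_j\Pi_j$ already factorizes and the target set is itself a product set, the conditioned measure stays a product measure and remains inside the mean-field family. Conditioning onto a general (non-rectangular) Kullback--Leibler neighborhood, as in the true-posterior argument recalled just before the statement, would break this factorization and leave $\mathcal{S}_{\rm MF}$.

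Verifying (\ref{eq:AR2}) is then immediate: since $Q$ is supported on $\otimes_{j=1}^d\Theta_j\subset\{\theta:D(P^*\|P_\theta)\le C_4\epsilon_*^2\}$, we obtain $QD(P^*\|P_\theta)\le C_4\epsilon_*^2$, so (\ref{eq:AR2}) holds with $C_2'=C_4$. For (\ref{eq:AR1}) I would use the tensorization of the Kullback--Leibler divergence between product measures, $D(Q\|\Pi)=\sum_{j=1}^d D(Q_j\|\Pi_j)$, together with the elementary identity $D(Q_j\|\Pi_j)=-\log\Pi_j(\Theta_j)$ for a prior restricted to $\Theta_j$. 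Summing gives $D(Q\|\Pi)=-\log\prod_{j=1}^d\Pi_j(\Theta_j)=-\log\Pi(\otimes_{j=1}^d\Theta_j)$, so the rectangular prior mass bound (\ref{eq:rec-pm}) yields $D(Q\|\Pi)\le C_5\epsilon_*^2$, i.e. (\ref{eq:AR1}) with $C_1'=C_5$. Proposition \ref{prop:check-gamma} then gives $\gamma^2\le(C_4+C_5)\epsilon_*^2$, and substituting into Theorem \ref{thm:VB} completes the argument.

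The analytic steps here are all short and standard once the construction is in place, so the real content is the \emph{design} of $Q$ rather than any estimate. The crux is recognizing that the hypothesis is phrased with a rectangular Kullback--Leibler neighborhood precisely so that the natural conditioning construction lands in $\mathcal{S}_{\rm MF}$ and so that the KL cost tensorizes into the single prior-mass quantity controlled by (\ref{eq:rec-pm}). I expect the only point requiring care is confirming the identification $P^*=P_{\theta^*}$ implicit in the statement, so that membership in the set $\{D(P_{\theta^*}\|P_\theta)\le C_4\epsilon_*^2\}$ of (\ref{eq:rec-pm}) translates directly into the bound $QD(P^*\|P_\theta)\le C_4\epsilon_*^2$ demanded by (\ref{eq:AR2}).
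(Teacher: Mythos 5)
Your proposal is correct and follows essentially the same route as the paper: it invokes Theorem \ref{thm:VB} and controls $\gamma^2$ via Proposition \ref{prop:check-gamma} using exactly the coordinatewise conditioned product measure $Q_j(B)=\Pi_j(B\cap\Theta_j)/\Pi_j(\Theta_j)$ that the paper constructs, with (\ref{eq:AR1}) following from KL tensorization and the rectangular prior mass bound, and (\ref{eq:AR2}) from the support of $Q$ lying in the Kullback--Leibler neighborhood. The paper leaves the verification of (\ref{eq:AR1}) and (\ref{eq:AR2}) as ``directly verified''; your write-up simply fills in those short computations correctly.
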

The condition (\ref{eq:rec-pm}) can be viewed as a generalization of the prior mass condition (\ref{eq:con-pm}). It requires the existence of a \textit{rectangular} Kullback-Leibler neighborhood that receives a not too small prior mass. The rectangular shape is coherent with the product structure imposed by the variational class $\mathcal{S}_{\rm MF}$. Too see why (\ref{eq:rec-pm}) leads to (\ref{eq:AR1}) and (\ref{eq:AR2}), we can still use the conditioning method and define a product distribution $dQ(\theta)=\prod_{j=1}^ddQ_j(\theta_j)$ by
$$Q_j(B)=\frac{\Pi_j(B\cap \Theta_j)}{\Pi_j(\Theta_j)},$$
for any measurable set $B$. It is easy to see that $Q\in\mathcal{S}_{\rm MF}$ and (\ref{eq:AR1}) and (\ref{eq:AR2}) can be directly verified. The existence of the rectangular neighborhood $\otimes_{j=1}^d\Theta_j$ is critical in the construction of the product measure $Q$ above.

Our review of the results on the convergence rates of variational posterior distributions largely follows our previous work \cite{zhang2017convergence}. We also recommend the two concurrent papers \citep{alquier2020concentration,yang2020alpha} to the readers. In addition to the results that are similar to Theorem \ref{thm:VB}, the work \cite{alquier2020concentration} also studied convergence rates under model misspecification, and \cite{yang2020alpha} considered a more general setting that can handle latent variables.

\section{Empirical Bayes and Variational Bayes}\label{sec:EB-VB}

In this section, we will establish a connection between empirical Bayes and variational Bayes. We first review a sieve prior example considered by \cite{zhang2017convergence}. Then, through an appropriate reparametrization of the problem, we show for a general model selection prior, the empirical Bayes posterior can be viewed as a variational Bayes posterior. This connection leads to the convergence rates for general empirical Bayes procedures as long as the hyperparameter set is discrete.

\subsection{Sieve Priors}

Consider a statistical model $\left\{P_{\theta}:\theta\in\otimes_{j=1}^{\infty}\Theta_j\right\}$. We assume that for each coordinate $j$, there is decomposition $\Theta_j=\Theta_{j1}\cup\Theta_{j2}$ and $\Theta_{1j}\cap\Theta_{2j}=\emptyset$. A sieve prior is specified by the following sampling process
\begin{enumerate}
\item Sample an integer $k\sim \pi$;
\item Conditioning on $k$, sample $\theta_j\sim f_{j1}$ for all $j\leq k$, and sample $\theta_j\sim f_{j2}$ for all $j>k$.
\end{enumerate}
As a concrete example, consider $\Theta_{j1}=\mathbb{R}\backslash\{0\}$ and $\Theta_{j2}=\{0\}$. When $f_{1j}$ is Gaussian and $f_{2j}$ is a delta measure at $0$, the sieve prior can be used for Bayesian estimation of a smooth signal in a Sobolev space \citep{shen2001rates,rivoirard2012posterior,gao2016rate}. More generally, we can assume that the densities $f_{j1}$ and $f_{j2}$ satisfy $\int_{\Theta_{j1}}f_{j1}=1$ and $\int_{\Theta_{j2}}f_{j2}=1$. Then, the posterior distribution induced by the sieve prior is
\begin{equation}
d\Pi(\theta|X) = \frac{\sum_k \pi(k) p(X|\theta)\prod_{j\leq k}f_{j1}(\theta_j)\prod_{j>k}f_{j2}(\theta_j)d\theta}{\sum_k \pi(k)\int p(X|\theta)\prod_{j\leq k}f_{j1}(\theta_j)\prod_{j>k}f_{j2}(\theta_j)d\theta}. \label{eq:sieve-post}
\end{equation}
It is observed in \cite{zhang2017convergence} that the mean-field variational approximation of (\ref{eq:sieve-post}) has a form that is very similar to the empirical Bayes procedure. In particular, the empirical Bayes posterior is defined as
\begin{equation}
d\wh{Q}_{\rm EB}(\theta) \propto p(X|\theta)\prod_{j\leq \wh{k}}f_{j1}(\theta_j)\prod_{j>\wh{k}}f_{j2}(\theta_j)d\theta,\label{eq:EB-post-def}
\end{equation}
where $\wh{k}$ is selected according to
$$\wh{k}=\argmax_k\log\left[\pi(k)\int p(X|\theta)\prod_{j\leq k}f_{j1}(\theta_j)\prod_{j>k}f_{j2}(\theta_j)d\theta\right].$$
The following proposition is established as Theorem 5.2 in \cite{zhang2017convergence}.

\begin{proposition}\label{prop:VB-EB-sieve}
Define the following set
\begin{eqnarray*}
\mathcal{S}_{\rm EB} &=& \Bigg\{Q: Q\left(\left(\otimes_{j\leq k}\Theta_{j1}\right)\bigotimes\left(\otimes_{j>k}\Theta_{j2}\right)\right) = 1\text{ for some integer }k
\Bigg\}.
\end{eqnarray*}
Then, the empirical Bayes posterior $\wh{Q}_{\rm EB}$ defined by (\ref{eq:EB-post-def}) is the variational approximation to the hierarchical Bayes posterior (\ref{eq:sieve-post}) in the variational class $\mathcal{S}_{\rm EB}$. In particular, $\wh{Q}_{\rm EB}$ can be written as (\ref{eq:VB}) with $\mathcal{S}=\mathcal{S}_{\rm EB}$ and $\Pi(\cdot|X)$ given by (\ref{eq:sieve-post}).
\end{proposition}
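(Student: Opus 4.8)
The plan is to exploit the fact that the variational family $\mathcal{S}_{\rm EB}$ is a \emph{union} of slices. Writing $S_k = \left(\otimes_{j\le k}\Theta_{j1}\right)\bigotimes\left(\otimes_{j>k}\Theta_{j2}\right)$, every $Q\in\mathcal{S}_{\rm EB}$ is supported on exactly one $S_k$: the $S_k$ are pairwise disjoint because $\Theta_{j1}\cap\Theta_{j2}=\emptyset$ forces coordinate $k+1$ to separate $S_k$ (where that coordinate lives in $\Theta_{(k+1)2}$) from any $S_{k'}$ with $k'>k$ (where it lives in $\Theta_{(k+1)1}$). Hence the optimization (\ref{eq:VB}) over $\mathcal{S}_{\rm EB}$ splits into an inner minimization over all distributions supported on a fixed slice $S_k$, followed by an outer minimization over the integer $k$. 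I would carry out these two stages separately.

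For the inner stage I would use the standard conditioning (I-projection) identity. Fixing $k$ with $\Pi(S_k|X)>0$, I claim that for every $Q$ supported on $S_k$,
\[
D(Q\|\Pi(\cdot|X)) = D\bigl(Q\,\|\,\Pi(\cdot|X,S_k)\bigr) - \log \Pi(S_k|X),
\]
where $\Pi(\cdot|X,S_k)$ is the posterior conditioned on $S_k$. This follows by factoring $\frac{dQ}{d\Pi(\cdot|X)} = \frac{dQ}{d\Pi(\cdot|X,S_k)}\cdot\frac{d\Pi(\cdot|X,S_k)}{d\Pi(\cdot|X)}$ and noting that the second factor equals $1/\Pi(S_k|X)$ on $S_k$. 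Since the first term on the right is nonnegative and vanishes precisely at $Q=\Pi(\cdot|X,S_k)$, the inner minimizer is the conditional posterior and the inner minimum value is $-\log \Pi(S_k|X)$.

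The crucial structural step is then to identify $\Pi(\cdot|X,S_k)$ explicitly. Because the slices are disjoint and $f_{j1},f_{j2}$ are supported on $\Theta_{j1},\Theta_{j2}$ respectively, only the $k$-th term of the mixture in (\ref{eq:sieve-post}) charges $S_k$; conditioning on $S_k$ therefore cancels both the global normalizer and the weight $\pi(k)$, leaving exactly the level-$k$ posterior $\propto p(X|\theta)\prod_{j\le k}f_{j1}(\theta_j)\prod_{j>k}f_{j2}(\theta_j)$. In particular, the conditional posterior on slice $\wh{k}$ coincides with $\wh{Q}_{\rm EB}$ in (\ref{eq:EB-post-def}). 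At the same time, $\Pi(S_k|X) = m_k/\sum_{k'}m_{k'}$ with $m_k = \pi(k)\int p(X|\theta)\prod_{j\le k}f_{j1}\prod_{j>k}f_{j2}\,d\theta$, so minimizing the inner value $-\log\Pi(S_k|X)$ over $k$ is the same as maximizing $\log m_k$ --- which is exactly the selection rule defining $\wh{k}$.

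Combining the two stages, the global minimizer over $\mathcal{S}_{\rm EB}$ is the conditional posterior on the slice $k=\wh{k}$, i.e.\ $\wh{Q}_{\rm EB}$, proving the claim. I expect the only genuine subtlety (rather than obstacle) to be the bookkeeping in the inner stage: verifying the Radon--Nikodym factorization and the resulting Pythagorean identity rigorously, and handling degenerate slices with $m_k=0$ (for which any supported $Q$ has infinite KL divergence and which are never selected). Everything else is a direct consequence of the disjointness of the slices.
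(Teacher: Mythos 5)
Your proposal is correct and follows essentially the same route as the paper's argument (given there for the more general Proposition \ref{prop:VB-EB-general}): use the disjointness of the slices to reduce the KL objective on each slice to the conditional posterior plus the term $-\log\Pi(S_k|X)$, identify the conditional posterior with the level-$k$ posterior, and observe that the outer minimization over $k$ reproduces the MMLE selection rule. The only cosmetic difference is that you package the inner step as the I-projection identity, whereas the paper expands the KL integrand directly; the content is identical.
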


The proof of Proposition \ref{prop:VB-EB-sieve} is straightforward by taking advantage of the sieve prior structure, especially the fact that the supports of $f_{j1}$ and $f_{j2}$ are disjoint for each $j$. Interestingly, there is no requirement for the likelihood function $p(X|\theta)$. However, the result is still restrictive, since it only applies to the class of sieve priors. We will extend Proposition \ref{prop:VB-EB-sieve} to arbitrary priors with a discrete hyperparameter in the next section.

\subsection{Model Selection Priors}

Consider a general statistical model
$$\mathcal{M} = \left\{P_{k,\theta^{(k)}}: k\in\mathcal{K}, \theta^{(k)}\in\Theta^{(k)}\right\},$$
where $\mathcal{K}$ is a countable set and $\Theta^{(k)}$ is some general parameter space. One can think of $\mathcal{K}$ as the set of model index, and we do not require that $\mathcal{K}$ to be an integer set. Given a specific $k$, the likelihood is parameterized by model parameter $\theta^{(k)}$.
A hierarchical prior first sample a model index $k\sim\pi$, and then conditioning on $k$ sample $\theta^{(k)}|k\sim \Pi^{(k)}$. This is arguably the most general prior distribution one can write down with a discrete hyperparameter. It clearly includes the sieve prior as a special case. Furthermore, we allow the likelihood to be parametrized differently with different $k$, and thus the form is even more general than (\ref{eq:basic-lambda-setting}).
The empirical Bayes posterior is given by
$$d\Pi^{(\wh{k})}(\theta^{(\wh{k})}|X)\propto p(X|\wh{k},\theta^{(\wh{k})})d\Pi^{(\wh{k})}(\theta^{(\wh{k})}),$$
with $\wh{k}$ selected according to
\begin{equation}
\wh{k}=\argmax_{k\in\mathcal{K}}\log\left[\pi(k)\int p(X|k,\theta^{(k)})d\Pi^{(k)}(\theta^{(k)})\right]. \label{eq:EB-k-hat-0}
\end{equation}
It turns out to characterize $\Pi^{(\wh{k})}(\cdot|X)$ as a variational approximation is a lot harder than the case of sieve prior. The main difficulty is that the support of $\Pi^{(k)}(\cdot|X)$ is different for each $k\in\mathcal{K}$.

The critical step is to embed different $\theta^{(k)}$'s into a common parameter space. For each $k\in\mathcal{K}$, we define
$$\Xi_k=\left\{\xi=(\theta^{(l)})_{l\in\mathcal{K}}:\theta^{(k)}\in\Theta^{(k)}\text{ and }\theta^{(l)}=*\text{ for all }l\in\mathcal{K}\backslash\{k\}\right\}.$$
Then, define
\begin{eqnarray*}
\Xi &=& \cup_{k\in\mathcal{K}}\Xi_k \\
&=& \left\{\xi=(\theta^{(l)})_{l\in\mathcal{K}}:\theta^{(k)}\in\Theta^{(k)}\text{ for some }k\in\mathcal{K}\text{ and }\theta^{(l)}=*\text{ for all }l\in\mathcal{K}\backslash\{k\}\right\}.
\end{eqnarray*}
The symbol $*$ is used for some arbitrary value outside of any parameter space $\Theta_k$. For any $\xi\in\Xi$, there exists some $k\in\mathcal{K}$ and some $\theta^{(k)}\in\Theta^{(k)}$, such that $\xi=(*,\cdots,*,\theta^{(k)},*,\cdots)$. Moreover, given any pair $(k,\theta^{(k)})$, there corresponds a unique $\xi\in\Xi$. In other words, for each $k\in\mathcal{K}$, there is a bijection between $\Theta_k$ and $\Xi_k$. In this way, we have a common parameter space for different models. With some slight abuse of notation, we can write
$$p(X|k,\theta^{(k)})=p(X|\xi).$$
Given the bijection between $\Theta_k$ and $\Xi_k$, there exists a distribution $\bar{\Pi}^{(k)}$ on $\Xi_k$, such that $\theta^{(k)}\sim \Pi^{(k)}$ is equivalent to $\xi\sim\bar{\Pi}^{(k)}$. The hierarchical model can be equivalently written as $k\sim \pi$ and $\xi|k\sim \bar{\Pi}^{(k)}$. The hierarchical Bayes posterior distribution is
\begin{equation}
d\bar{\Pi}(\xi|X) = \frac{\sum_{k\in\mathcal{K}}\pi(k) p(X|\xi)d\bar{\Pi}^{(k)}(\xi)}{\sum_{k\in\mathcal{K}}\pi(k)\int p(X|\xi)d\bar{\Pi}^{(k)}(\xi)}. \label{eq:HB-gen}
\end{equation}
We can also define an empirical Bayes posterior on the space $\Xi$ by
\begin{equation}
d\wh{Q}_{\rm EB}(\xi) \propto p(X|\xi)d\bar{\Pi}^{(\wh{k})}(\xi), \label{eq:Q-EB-gen}
\end{equation}
where $\wh{k}$ is selected according to
\begin{equation}
\wh{k}=\argmax_{k\in\mathcal{K}}\log\left[\pi(k)\int p(X|\xi)d\bar{\Pi}^{(k)}(\xi)\right]. \label{eq:EB-k-hat}
\end{equation}
Since $\int p(X|k,\theta^{(k)})d\Pi^{(k)}(\theta^{(k)})=\int p(X|\xi)d\bar{\Pi}^{(k)}(\xi)$, the two definitions (\ref{eq:EB-k-hat-0}) and (\ref{eq:EB-k-hat}) are equivalent. Similar to the relation between $\Pi^{(k)}$ and $\bar{\Pi}^{(k)}$, we also know that $\theta^{(\wh{k})}\sim \Pi^{(\wh{k})}(\cdot|X)$ is equivalent to $\xi\sim \wh{Q}_{\rm EB}$.
A variational perspective of the empirical Bayes posterior $\wh{Q}_{\rm EB}$ is given by the following result.

\begin{proposition}\label{prop:VB-EB-general}
Define the following set
$$\mathcal{S}_{\rm EB}=\left\{Q: Q(\Xi_k)=1\text{ for some }k\in\mathcal{K}\right\}$$
Then, the empirical Bayes posterior $\wh{Q}_{\rm EB}$ defined by (\ref{eq:Q-EB-gen}) is the variational approximation to the hierarchical Bayes posterior (\ref{eq:HB-gen}) in the variational class $\mathcal{S}_{\rm EB}$. In particular, $\wh{Q}_{\rm EB}$ can be written as (\ref{eq:VB}) with $\mathcal{S}=\mathcal{S}_{\rm EB}$ and $\Pi(\cdot|X)$ given by (\ref{eq:HB-gen}).
\end{proposition}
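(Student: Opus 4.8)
The plan is to prove the claim by directly showing that $\wh{Q}_{\rm EB}$ solves the minimization problem $\argmin_{Q\in\mathcal{S}_{\rm EB}}D(Q\|\bar{\Pi}(\cdot|X))$, where $\bar{\Pi}(\cdot|X)$ is the hierarchical posterior in (\ref{eq:HB-gen}). The structural feature that makes this tractable is that every $Q\in\mathcal{S}_{\rm EB}$ is, by definition, supported entirely on a single slice $\Xi_k$. I would therefore split the optimization into a nested form,
\[
\inf_{Q\in\mathcal{S}_{\rm EB}}D(Q\|\bar{\Pi}(\cdot|X))=\inf_{k\in\mathcal{K}}\ \inf_{Q:\,Q(\Xi_k)=1}D(Q\|\bar{\Pi}(\cdot|X)),
\]
and solve the inner problem (fixed $k$) and the outer problem (over $k$) in turn.

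For the inner problem the key observation is that the slices $\Xi_k$ are pairwise disjoint: a point of $\Xi_k$ has its $k$-th coordinate in $\Theta^{(k)}$ and all other coordinates equal to $*$, and since $*$ lies outside every parameter space, an element cannot simultaneously belong to $\Xi_k$ and $\Xi_{l}$ for $l\neq k$. Consequently $\bar{\Pi}^{(l)}(\Xi_k)=0$ for $l\neq k$, so when we restrict the mixture prior $\bar{\Pi}=\sum_l \pi(l)\bar{\Pi}^{(l)}$ to $\Xi_k$ only the single term $\pi(k)\bar{\Pi}^{(k)}$ survives. Writing $Z=\sum_{l}\pi(l)\int p(X|\xi)d\bar{\Pi}^{(l)}(\xi)$ for the global normalizing constant and $Z_k=\int p(X|\xi)d\bar{\Pi}^{(k)}(\xi)$ for the within-model marginal likelihood, this gives, for $\xi\in\Xi_k$, the identity $d\bar{\Pi}(\xi|X)=\pi(k)p(X|\xi)\,d\bar{\Pi}^{(k)}(\xi)/Z$. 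Letting $\wh{Q}_{\rm EB}^{(k)}$ denote the within-model empirical Bayes posterior with density proportional to $p(X|\xi)\,d\bar{\Pi}^{(k)}(\xi)$, a routine substitution yields the decomposition $D(Q\|\bar{\Pi}(\cdot|X))=D(Q\|\wh{Q}_{\rm EB}^{(k)})+\log Z-\log\!\big(\pi(k)Z_k\big)$ for every $Q$ supported on $\Xi_k$.

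Given this decomposition the two remaining steps are immediate. By Gibbs' inequality (nonnegativity of the KL divergence), the inner minimization over $Q$ with $Q(\Xi_k)=1$ is solved uniquely by $Q=\wh{Q}_{\rm EB}^{(k)}$, with optimal value $\log Z-\log(\pi(k)Z_k)$. Since $\log Z$ does not depend on $k$, the outer minimization over $k$ amounts to maximizing $\log(\pi(k)Z_k)=\log\!\big[\pi(k)\int p(X|\xi)d\bar{\Pi}^{(k)}(\xi)\big]$, which is exactly the selection rule (\ref{eq:EB-k-hat}) defining $\wh{k}$. Hence the global minimizer is $\wh{Q}_{\rm EB}^{(\wh{k})}$, which is precisely $\wh{Q}_{\rm EB}$ of (\ref{eq:Q-EB-gen}), establishing the proposition. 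I do not expect a serious obstacle here; the only points requiring care are the disjointness of the slices $\Xi_k$ (which is the whole purpose of the $*$-embedding and is what collapses the mixture to a single term on each slice) and the measure-theoretic bookkeeping ensuring the KL divergences are well defined and the additive constant $\log Z$ is finite and independent of both $Q$ and $k$ so that it drops out of the $\argmin$.
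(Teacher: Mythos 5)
Your proposal is correct and follows essentially the same route as the paper's proof: both exploit the disjointness of the slices $\Xi_k$ (the orthogonality $\bar{\Pi}^{(l)}(\Xi_k)=0$ for $l\neq k$) to reduce $D(Q\|\bar{\Pi}(\cdot|X))$ on each slice to a within-model KL term plus $\log p_{\bar{\Pi}}(X)-\log\bigl(\pi(k)\int p(X|\xi)d\bar{\Pi}^{(k)}(\xi)\bigr)$, then minimize the inner problem by Gibbs' inequality and the outer problem by the MMLE selection rule (\ref{eq:EB-k-hat}). Your write-up merely makes the nested $\inf_k\inf_Q$ structure and the additive-constant bookkeeping slightly more explicit than the paper does.
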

\begin{proof}
By the construction of $\Xi_k$, and $\bar{\Pi}_k$, we have for any $k,l\in\mathcal{K}$,
\begin{equation}
\bar{\Pi}^{(l)}(\Xi_k)=\begin{cases}
1, & k=l, \\
0, & k\neq l.
\end{cases} \label{eq:og}
\end{equation}
For any $Q\in\mathcal{S}_{\rm EB}$, there exists some $k\in\mathcal{K}$, such that $Q(\Xi_k)=1$. Then,
\begin{eqnarray}
\nonumber D(Q\|\bar{\Pi}(\cdot|X)) &=& \int_{\Xi_k}\log\frac{dQ(\xi)p_{\bar{\Pi}}(X)}{\sum_{l\in\mathcal{K}}\pi(l) p(X|\xi)d\bar{\Pi}^{(l)}(\xi)}dQ(\xi) \\
\label{eq:fs-z} &=& \int_{\Xi_k}\log\frac{dQ(\xi)}{\pi(k)p(X|\xi)d\bar{\Pi}^{(k)}(\xi)}dQ(\xi) + \log p_{\bar{\Pi}}(X),
\end{eqnarray}
where $p_{\bar{\Pi}}(X)=\sum_{k\in\mathcal{K}}\pi(k) \int p(X|\xi)d\bar{\Pi}^{(k)}(\xi)$ is the marginal distribution. The equality (\ref{eq:fs-z}) is derived from the property (\ref{eq:og}). Therefore, for each specific $k\in\mathcal{K}$, $D(Q\|\bar{\Pi}(\cdot|X))$ is minimized by
$$dQ^{(k)}(\xi)\propto p(X|\xi)d\bar{\Pi}^{(k)}(\xi),$$
among all $Q$ such that $Q(\Xi_k)=1$ holds. In other words, we have $Q^{(k)}=\bar{\Pi}^{(k)}(\cdot|X)$. Plug $Q^{(k)}=\bar{\Pi}^{(k)}(\cdot|X)$ back into the objective function $D(Q\|\bar{\Pi}(\cdot|X))$, and we can see that $\wh{Q}_{\rm EB}=\bar{\Pi}^{(\wh{k})}(\cdot|X)=\argmin_{Q\in\mathcal{S}_{\rm EB}}D(Q\|\bar{\Pi}(\cdot|X))$, and $\wh{k}$ is determined by (\ref{eq:EB-k-hat}).
\end{proof}

The result of Proposition \ref{prop:VB-EB-general} is very general, thanks to the embedding of the original parameter spaces into a common $\Xi$. It basically covers all empirical Bayes procedures with a discrete hyperparameter. There is no assumption on the likelihood and the prior. Perhaps the only assumption is the hyperparameter weight $\pi(k)$ used in (\ref{eq:EB-k-hat-0}) or (\ref{eq:EB-k-hat}) needs to be a probability distribution. In fact, even this condition can be relaxed. By scrutinizing the proof of Proposition \ref{prop:VB-EB-general}, all we need is that hierarchical Bayes posterior (\ref{eq:HB-gen}) is well defined. This means the conclusion of Proposition \ref{prop:VB-EB-general} continues to hold with a more general
$$\wh{k}=\argmax_{k\in\mathcal{K}}\log\left[w(k)\int p(X|\xi)d\bar{\Pi}^{(k)}(\xi)\right],$$
as long as $\sum_{k\in\mathcal{K}}w(k) \int p(X|\xi)d\bar{\Pi}^{(k)}(\xi)<\infty$ holds almost surely.

Proposition \ref{prop:VB-EB-general} is more general than Proposition \ref{prop:VB-EB-sieve}. In fact, when specialized to the setting of sieve priors, Proposition \ref{prop:VB-EB-general} removes the assumption $\Theta_{1j}\cap\Theta_{2j}=\emptyset$ required by Proposition \ref{prop:VB-EB-sieve} for the sieve prior. This is because the target of the variational approximation of Proposition \ref{prop:VB-EB-general} is a hierarchical Bayes posterior on the space $\Xi$, where the distributions $\{\bar{\Pi}^{(k)}\}_{k\in\mathcal{K}}$ naturally satisfy the orthogonality condition (\ref{eq:og}). In comparison, Proposition \ref{prop:VB-EB-sieve} works with a variational approximation to a hierarchical Bayes posterior on the original parameter space. The condition $\Theta_{1j}\cap\Theta_{2j}=\emptyset$ is then necessary to guarantee the orthogonality of $\{{\Pi}^{(k)}\}_{k\in\mathcal{K}}$.

Since $\wh{Q}_{\rm EB}$ is a variational posterior according to Proposition \ref{prop:VB-EB-general}, we can then derive its convergence rate using Theorem \ref{thm:VB}.
\begin{thm}\label{thm:EB-ms}
Consider a non-negative loss function $L(\cdot,\cdot)$ and a rate $\epsilon_*\geq 1$. Let $C, C_1, C_2, C_3>0$ be constants such that $C>C_2+C_3+2$. We assume
\begin{itemize}
\item For any $\epsilon\geq \epsilon_*$, there exist subsets $\bar{\mathcal{K}}\subseteq\mathcal{K}$, $\mathcal{F}_k\subseteq\Theta^{(k)}$ and a testing function $\phi$, such that
\begin{equation}
P^*\phi + \sup_{\{k\in\bar{\mathcal{K}},\theta^{(k)}\in\mathcal{F}_k:L(P^*,P_{k,\theta^{(k)}})\geq C_1\epsilon^2\}}P_{k, \theta^{(k)}}(1-\phi) \leq \exp\left(-C\epsilon^2\right). \label{eq:EB-con-test}
\end{equation}
\item For any $\epsilon \geq \epsilon_*$, the subsets $\bar{\mathcal{K}}$ and $\{\mathcal{F}_k\}_{k\in\mathcal{K}}$ defined above satisfy
\begin{equation}
\sum_{k\not\in\bar{\mathcal{K}}}\pi(k)+\sum_{k\in\bar{\mathcal{K}}}\pi(k)\Pi^{(k)}(\mathcal{F}_k^c)\leq\exp\left(-C\epsilon^2\right). \label{eq:EB-con-sieve}
\end{equation}
\item There exists some $k^*\in\mathcal{K}$ and some constant $\rho>1$ such that
\begin{equation}
\pi(k^*)\Pi^{(k^*)}\left(\left\{\theta^{(k^*)}\in\Theta^{(k^*)}: D_{\rho}(P^*\|P_{k^*,\theta^{(k^*)}})\leq C_3\epsilon_*^2\right\}\right) \geq\exp(-C_2\epsilon_*^2). \label{eq:EB-pm-0}
\end{equation}
\end{itemize}
Then, for $\wh{k}$ defined by (\ref{eq:EB-k-hat-0}), we have
$$P^*\int L(P^*,P_{\wh{k},\theta^{(\wh{k})}})d\Pi^{(\wh{k})}(\theta^{(\wh{k})}|X)\leq M\epsilon_*^2,$$
for some $M>0$ only depending on $C,C_1$ and $\rho$.
\end{thm}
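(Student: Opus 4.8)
The plan is to apply Theorem~\ref{thm:VB} to $\wh{Q}_{\rm EB}$, which by Proposition~\ref{prop:VB-EB-general} is the variational posterior $\argmin_{Q\in\mathcal{S}_{\rm EB}}D(Q\|\bar{\Pi}(\cdot|X))$ on the embedded space $\Xi$ with mixture prior $\bar{\Pi}=\sum_{k\in\mathcal{K}}\pi(k)\bar{\Pi}^{(k)}$. The work then splits into two tasks: first, showing that the per-model hypotheses (\ref{eq:EB-con-test})--(\ref{eq:EB-pm-0}) imply the global conditions (\ref{eq:con-test})--(\ref{eq:con-pm}) of Theorem~\ref{thm:VB} on $\Xi$; second, bounding the variational gap $\gamma^2$.

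For the translation I would use the bijection between $\Theta^{(k)}$ and $\Xi_k$, the identity $p(X|k,\theta^{(k)})=p(X|\xi)$, and crucially the orthogonality (\ref{eq:og}) that $\bar{\Pi}^{(l)}(\Xi_k)=\indc{k=l}$. Taking $\mathcal{F}=\cup_{k\in\bar{\mathcal{K}}}\{\xi\in\Xi_k:\theta^{(k)}\in\mathcal{F}_k\}$ with the same test $\phi$ makes (\ref{eq:con-test}) on $\Xi$ identical to (\ref{eq:EB-con-test}). Since each $\bar{\Pi}^{(k)}$ is supported on $\Xi_k$, orthogonality gives $\bar{\Pi}(\mathcal{F}^c)=\sum_{k\notin\bar{\mathcal{K}}}\pi(k)+\sum_{k\in\bar{\mathcal{K}}}\pi(k)\Pi^{(k)}(\mathcal{F}_k^c)$ (the complement contributes $1$ when $k\notin\bar{\mathcal{K}}$ and $\Pi^{(k)}(\mathcal{F}_k^c)$ otherwise), which is exactly the left side of (\ref{eq:EB-con-sieve}). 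Finally, keeping only the $k=k^*$ term in $\bar{\Pi}(\{D_{\rho}(P^*\|P_\xi)\leq C_3\epsilon_*^2\})=\sum_k\pi(k)\bar{\Pi}^{(k)}(\{D_{\rho}\leq C_3\epsilon_*^2\})$ and passing through the bijection recovers (\ref{eq:EB-pm-0}), so all three conditions hold on $\Xi$.

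Next I would bound $\gamma^2=\inf_{Q\in\mathcal{S}_{\rm EB}}P^*D(Q\|\bar{\Pi}(\cdot|X))$ through Proposition~\ref{prop:check-gamma}, using the conditioning construction localized to the single model $k^*$. Setting $K=\{\xi\in\Xi_{k^*}:D_{\rho}(P^*\|P_\xi)\leq C_3\epsilon_*^2\}$ and $dQ=\frac{\indc{K}}{\bar{\Pi}^{(k^*)}(K)}d\bar{\Pi}^{(k^*)}$ gives a distribution supported on $\Xi_{k^*}$, hence $Q\in\mathcal{S}_{\rm EB}$. Condition (\ref{eq:AR2}) is immediate because $Q$ lives on $K$, where $D(P^*\|P_\theta)\leq D_{\rho}(P^*\|P_\theta)\leq C_3\epsilon_*^2$. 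For (\ref{eq:AR1}), orthogonality again does the essential work: on $\Xi_{k^*}$ the mixture collapses to $d\bar{\Pi}=\pi(k^*)d\bar{\Pi}^{(k^*)}$, so $\frac{dQ}{d\bar{\Pi}}=\frac{1}{\pi(k^*)\bar{\Pi}^{(k^*)}(K)}$ on $K$ and therefore $D(Q\|\bar{\Pi})=\log\frac{1}{\pi(k^*)\bar{\Pi}^{(k^*)}(K)}\leq C_2\epsilon_*^2$ by (\ref{eq:EB-pm-0}). Proposition~\ref{prop:check-gamma} then yields $\gamma^2\leq(C_2+C_3)\epsilon_*^2$.

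Combining, Theorem~\ref{thm:VB} gives $P^*\wh{Q}_{\rm EB}L(P^*,P_\xi)\leq M(\epsilon_*^2+\gamma^2)\lesssim\epsilon_*^2$, and the equivalence $\xi\sim\wh{Q}_{\rm EB}\Leftrightarrow\theta^{(\wh{k})}\sim\Pi^{(\wh{k})}(\cdot|X)$ together with $P_\xi=P_{\wh{k},\theta^{(\wh{k})}}$ rewrites the left side as the stated integral, finishing the proof (the extra dependence on $C_2,C_3$ is absorbed since $C>C_2+C_3+2$). The step demanding the most care is the $\gamma^2$ bound: one must exhibit a \emph{single-model} variational distribution that is simultaneously close to the full \emph{mixture} prior $\bar{\Pi}$ and concentrated on a KL neighborhood, and it is precisely the orthogonality (\ref{eq:og}) that reduces $D(Q\|\bar{\Pi})$ to the clean expression $\log\frac{1}{\pi(k^*)\bar{\Pi}^{(k^*)}(K)}$ instead of a quantity entangling the whole mixture.
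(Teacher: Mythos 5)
Your proposal is correct and follows essentially the same route as the paper's own proof: invoke Proposition \ref{prop:VB-EB-general} to cast $\wh{Q}_{\rm EB}$ as a variational posterior on $\Xi$, translate (\ref{eq:EB-con-test})--(\ref{eq:EB-con-sieve}) into (\ref{eq:con-test})--(\ref{eq:con-sieve}) via the bijections and orthogonality (\ref{eq:og}), and bound $\gamma^2$ through Proposition \ref{prop:check-gamma} by conditioning $\bar{\Pi}$ on the R\'enyi neighborhood $K\subseteq\Xi_{k^*}$ (your normalization by $\bar{\Pi}^{(k^*)}(K)$ and the paper's by $\bar{\Pi}(K)$ define the same distribution since $\bar{\Pi}(K)=\pi(k^*)\bar{\Pi}^{(k^*)}(K)$). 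Your identification of the orthogonality property as the key to collapsing $D(Q\|\bar{\Pi})$ to $\log\frac{1}{\pi(k^*)\bar{\Pi}^{(k^*)}(K)}$ matches the paper's argument exactly.
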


\begin{proof}
By Proposition \ref{prop:VB-EB-general}, we can view $\wh{Q}_{\rm EB}=\bar{\Pi}^{(\wh{k})}(\cdot|X)$ as the minimizer of $D(Q\|\bar{\Pi}(\cdot|X))$ under the constraint $Q\in\mathcal{S}_{\rm EB}$. Therefore, we can directly apply Theorem \ref{thm:VB}. According to the bijection between $\Theta_k$ and $\Xi_k$ and the relation between $\Pi^{(k)}$ and $\bar\Pi^{(k)}$, (\ref{eq:EB-con-test}) and (\ref{eq:EB-con-sieve}) are equivalent to 
$$
P^*\phi +\sup_{\xi\in\mathcal{F}:L(P^*,P_{\xi})\geq C_1\epsilon^2}P_{\xi}(1-\phi) \leq \exp\left(-C\epsilon^2\right),
$$
and
$$
\bar\Pi(\mathcal{F}^c)\leq\exp\left(-C\epsilon^2\right).
$$
with $\mathcal{F} = \cup_{k\in\bar{\mathcal{K}}}\left\{\xi = (*,\cdots,*,\theta^{(k)},*,\cdots): \theta^{(k)}\in\mathcal{F}_k\right\}$ and $\bar\Pi = \sum_{k\in\mathcal{K}}\pi(k)\bar\Pi^{(k)}$. Thus, (\ref{eq:con-test}) and (\ref{eq:con-sieve}) in Theorem \ref{thm:VB} are satisfied and it is sufficient to bound $\gamma^2$ by checking the two conditions (\ref{eq:AR1}) and (\ref{eq:AR2}) of Proposition \ref{prop:check-gamma}. To do this, we define a distribution $Q$ by
$$Q(B)=\frac{\bar{\Pi}(B\cap K)}{\bar{\Pi}(K)},$$
with $\bar{\Pi}=\sum_{k\in\mathcal{K}}\pi(k)\bar{\Pi}^{(k)}$ and
$$K=\left\{\xi\in\Xi_{k^*}: D_{\rho}(P^*\|P_{\xi})\leq C_3\epsilon_*^2\right\}.$$
This construction implies that $Q(\Xi_{k^*})=1$ and thus $Q\in\mathcal{S}_{\rm EB}$. Since $Q$ is supported on $K$, we clearly have $QD(P^*\|P_{\xi})\leq QD_{\rho}(P^*\|P_{\xi})\leq C_3\epsilon_*^2$. By (\ref{eq:EB-pm-0}), we also have
$$D(Q\|\bar{\Pi})=\log\frac{1}{\bar{\Pi}(K)}=\log\frac{1}{\pi(k^*)\Pi^{(k^*)}\left(\left\{\theta^{(k^*)}\in\Theta^{(k^*)}: D_{\rho}(P^*\|P_{k^*,\theta^{(k^*)}})\leq C_3\epsilon_*^2\right\}\right)}\leq C_2\epsilon_*^2.$$
Hence, (\ref{eq:AR1}) and (\ref{eq:AR2}) hold with $C_1'=C_2$ and $C_2'=C_3$. Finally, note that (\ref{eq:EB-pm-0}) also implies (\ref{eq:con-pm}), and thus by the conclusion of Theorem \ref{thm:VB}, we have
$$P^*\Pi^{(\wh{k})}\left(L(P^*,P_{\wh{k},\theta^{(\wh{k})}})\Big|X\right)=P^*\bar{\Pi}^{(\wh{k})}\left(L(P^*,P_{\xi})\Big|X\right)\lesssim \epsilon_*^2,$$
as desired.
\end{proof}

Theorem \ref{thm:EB-ms} derives a convergence rate for the empirical Bayes posterior. With the variational approximation perspective, the proof of Theorem \ref{thm:EB-ms} is almost straightforward. Note that all the conditions of Theorem \ref{thm:EB-ms} are stated with respect to the original parameter space. The space $\Xi$ and the corresponding variational approximation are only used in the proof. The form of Theorem \ref{thm:EB-ms} is very similar to that of Theorem \ref{thm:mf}. Both Theorem \ref{thm:EB-ms} and Theorem \ref{thm:mf} generalize the prior mass condition in \cite{schwartz1965bayes,barron1988exponential,barron1999consistency,ghosal1999posterior,shen2001rates,ghosal2000convergence,ghosal2007convergence} to accommodate the structure of the variational classes. A sufficient condition for (\ref{eq:EB-pm-0}) is the existence of $k^*\in\mathcal{K}$ such that
\begin{eqnarray}
\label{eq:EB-pm-model} \pi(k^*) &\geq& \exp(-C_2'\epsilon_*^2), \\
\label{eq:EB-pm-parameter} \Pi^{(k^*)}\left(\left\{\theta^{(k^*)}\in\Theta^{(k^*)}: D_{\rho}(P^*\|P_{k^*,\theta^{(k^*)}})\leq C_3\epsilon_*^2\right\}\right) &\geq& \exp(-C_2''\epsilon_*^2),
\end{eqnarray}
with $C_2'+C_2''=C_2$. That is, there exists a model, such that both the prior probability of this model and the prior probability of the information neighborhood of $P^*$ within the model are not too small. In fact, conditions similar to (\ref{eq:EB-pm-model}) and (\ref{eq:EB-pm-parameter}) are already found in the literature of hierarchical Bayes convergence rates \citep{rousseau2010rates,rivoirard2012posterior,han2017bayes}. This suggests that many nonparametric Bayesian estimation problems that are solved in the literature by hierarchical Bayes have the same theoretical guarantees with empirical Bayes procedures.

\subsection{Some Examples}

In this section, we illustrate the results of Theorem \ref{thm:EB-ms} with two examples of nonparametric estimation.

\paragraph{Infinite dimensional exponential families.} Define a probability measure $P_{\theta}$ by
$$\frac{dP_{\theta}}{d\ell}=\exp\left(\sum_{j=0}^{\infty}\theta_j\phi_j-c(\theta)\right),$$
where $\ell$ denotes the Lebesgue measure on $[0,1]$, and $\phi_j$ is the $j$th Fourier basis function of $L^2[0,1]$, and $c(\theta)$ is given by
$$c(\theta)=\log\int_0^1\exp\left(\sum_{j=0}^{\infty}\theta_j\phi_j(x)\right)dx.$$
Since $\phi_0(x ) = 1$ and $\theta_0$ can take an arbitrary value without changing $P_{\theta}$, we simply set $\theta_0=0$. In other words, $P_{\theta}$ is fully parameterized by $\theta=(\theta_1,\theta_2,...)$.
Given i.i.d. observations from the product measure $P_{\theta^*}^n$, our goal is to estimate $P_{\theta^*}$, where $\theta^*$ is assumed to belong to the Sobolev ball,
$$\Theta_{\alpha}(R)=\left\{\theta=(\theta_j)_{j=1}^{\infty}:\sum_{j=1}^{\infty}j^{2\alpha}\theta_j^2\leq R^2\right\}.$$
The smoothness parameter $\alpha$ and the radius $R$ are assumed to be constants throughout the section.

For any integer $k$, consider the prior distribution $\Pi^{(k)}=\left(\otimes_{j\leq k}N(0,\sigma^2)\right)\bigotimes \left(\otimes_{j>k}\delta_0\right)$. That is, to sample $\theta\sim\Pi^{(k)}$, one first sample independent $\theta_j\sim N(0,\sigma^2)$ for all $j\leq k$ and then set $\theta_j=0$ for all $j>0$. This leads to the prior distribution
$$d\Pi^{(k)}(\theta|X_1,\cdots,X_n)\propto \prod_{i=1}^np(X_i|\theta)d\Pi^{(k)}(\theta).$$
We select $k$ via the empirical Bayes principle. That is,
$$
\wh{k}=\argmax_{k\in[n]}\log\left[w(k)\int \prod_{i=1}^np(X_i|\theta)d\Pi^{(k)}(\theta)\right],
$$
where the weight function is chosen to be proportional to the probability mass function of Poisson distribution, $w(k)=\tau^k/k!$. We then obtain the empirical Bayes posterior $\Pi^{(\wh{k})}(\cdot|X_1,\cdots,X_n)$.
\begin{thm}\label{thm:den-exp-EB}
Consider the above empirical Bayes posterior distribution defined by $\Pi^{(k)}$ and $w(k)$ with some constants $\sigma^2,\tau>0$. Assume $\alpha>1/2$. We then have
$$P_{\theta^*}^n\int H^2(P_{\theta},P_{\theta^*})d\Pi^{(\wh{k})}(\theta|X_1,\cdots,X_n)\leq Mn^{-\frac{2\alpha}{2\alpha+1}}(\log n)^{\frac{2\alpha}{2\alpha+1}},$$
for some constant $M>0$ uniformly over all $\theta^*\in\Theta_{\alpha}(R)$.
\end{thm}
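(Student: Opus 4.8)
The plan is to apply Theorem \ref{thm:EB-ms} with the i.i.d. product measures $P^*=P_{\theta^*}^n$ and $P_{k,\theta^{(k)}}=P_\theta^n$, taking the loss to be $L(P_{\theta^*}^n,P_\theta^n)=nH^2(P_\theta,P_{\theta^*})$ and the rate $\epsilon_*^2=n\bar\epsilon_n^2$ with the per-observation rate $\bar\epsilon_n^2=n^{-2\alpha/(2\alpha+1)}(\log n)^{2\alpha/(2\alpha+1)}$. With this scaling the conclusion of Theorem \ref{thm:EB-ms} reads $nP_{\theta^*}^n\int H^2\,d\Pi^{(\wh k)}\leq Mn\bar\epsilon_n^2$, which is exactly the desired bound after dividing by $n$ (and $\epsilon_*\ge 1$ for large $n$). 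The resolution level will be taken as $k^*\asymp(n/\log n)^{1/(2\alpha+1)}$, the standard truncation balancing the squared bias $(k^*)^{-2\alpha}$ against the complexity $k^*(\log n)/n$, and the sieve will be built from the models $k\le\bar k$ with $\bar k\asymp n\bar\epsilon_n^2/\log n$ together with a magnitude restriction $\norm\theta\le B_k$ inside each model. A preliminary observation is that $\alpha>1/2$ forces $\sum_j|\theta_j^*|\le(\sum_j j^{-2\alpha})^{1/2}R<\infty$, so $\sum_j\theta_j^*\phi_j$ is uniformly bounded and $P_{\theta^*}$ has a density bounded above and below; this regularity is what makes $H^2(P_\theta,P_{\theta'})$, $D(P_{\theta^*}\|P_\theta)$ and $D_\rho(P_{\theta^*}\|P_\theta)$ all comparable to the squared distance $\norm{\theta-\theta'}^2$ on bounded sets, via second-order control of the log-partition function $c(\theta)$, and this equivalence is the workhorse behind every condition.

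First I would verify the prior-mass conditions (\ref{eq:EB-pm-model}) and (\ref{eq:EB-pm-parameter}), which are sufficient for (\ref{eq:EB-pm-0}). For (\ref{eq:EB-pm-model}), Stirling's formula applied to the Poisson weight $\pi(k^*)\propto\tau^{k^*}/k^*!$ gives $-\log\pi(k^*)\asymp k^*\log k^*\asymp k^*\log n\lesssim n\bar\epsilon_n^2$. For (\ref{eq:EB-pm-parameter}), let $\theta^{*,(k^*)}$ be the truncation of $\theta^*$ to its first $k^*$ coordinates; the Sobolev tail bound gives $\norm{\theta^*-\theta^{*,(k^*)}}^2\le R^2(k^*)^{-2\alpha}\lesssim\bar\epsilon_n^2$, so by the equivalence above an $\ell_2$-ball of radius $\asymp\bar\epsilon_n$ around $\theta^{*,(k^*)}$ lies inside $\{D_\rho(P_{\theta^*}^n\|P_\theta^n)\le C_3\epsilon_*^2\}$. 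A Gaussian small-ball estimate for $N(0,\sigma^2 I_{k^*})$ then yields $-\log\Pi^{(k^*)}(\text{ball})\lesssim\norm{\theta^{*,(k^*)}}^2/\sigma^2+k^*\log(1/\bar\epsilon_n)\lesssim k^*\log n\lesssim n\bar\epsilon_n^2$, giving (\ref{eq:EB-pm-parameter}).

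It remains to check the testing condition (\ref{eq:EB-con-test}) and the sieve condition (\ref{eq:EB-con-sieve}). For testing, the Lipschitz property of $\theta\mapsto P_\theta$ into the Hellinger metric reduces the Hellinger covering number of $\mathcal{F}_k=\{\norm\theta\le B_k\}$ to an $\ell_2$-covering of a ball in $\R^{k}$, of log-cardinality $\lesssim k\log(B_k/\bar\epsilon_n)$; summing over $k\le\bar k$ keeps the total entropy of $\cup_{k\le\bar k}\mathcal{F}_k$ at scale $\bar\epsilon_n$ bounded by $\lesssim n\bar\epsilon_n^2$, and the usual $n$-sample Hellinger testing construction (Le Cam--Birg\'e) then produces, for every $\epsilon\ge\epsilon_*$, a single test $\phi$ meeting (\ref{eq:EB-con-test}). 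For (\ref{eq:EB-con-sieve}), the super-exponential decay of the Poisson weights controls $\sum_{k>\bar k}\pi(k)$, while choosing $B_k$ of order $\sqrt{n}\,\bar\epsilon_n$ makes each Gaussian tail $\Pi^{(k)}(\norm\theta>B_k)$ smaller than $\exp(-C\epsilon^2)$; combining the two gives the required bound.

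The main obstacle is the regularity analysis of the exponential family: one must establish, uniformly over the growing sieves, that $H^2$, $D$ and $D_\rho$ are all comparable to $\norm{\theta-\theta^*}^2$, which requires bounding $c(\theta)$ and its first two derivatives as $\theta$ ranges over $\{\norm\theta\le B_k\}$ with $B_k$ growing in $n$, and simultaneously controlling the Hellinger entropy of that sieve. The unbounded support of the Gaussian prior is the recurring nuisance that forces the magnitude cutoff $B_k$ and the attendant tail estimates; calibrating $B_k$ to be just large enough for (\ref{eq:EB-con-sieve}) yet small enough for the entropy bound, without losing any logarithmic factor, is the delicate part of the argument.
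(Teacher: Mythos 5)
Your proposal is correct and follows essentially the same route as the paper: apply Theorem \ref{thm:EB-ms} with loss $nH^2$, rate $\epsilon_*^2=n^{1/(2\alpha+1)}(\log n)^{2\alpha/(2\alpha+1)}$, and $k^*\asymp(n/\log n)^{1/(2\alpha+1)}$, verifying (\ref{eq:EB-pm-model}) by Stirling on the Poisson weight and (\ref{eq:EB-pm-parameter}) by truncation plus a Gaussian small-ball bound combined with the comparability of $D_\rho$ and $\|\theta-\theta^*\|^2$ (the paper's Lemma B.12 citation, where the constant depends on $\|\theta-\theta^*\|_1$, which your $\alpha>1/2$ observation controls). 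The only difference is that the paper imports the testing and sieve conditions wholesale from Lemma B.10 of \cite{zhang2017convergence}, whereas you sketch the standard entropy/Le Cam--Birg\'e construction directly; this is the same argument unpacked rather than a different approach.
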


Adaptive Bayesian density estimation with infinite exponential family approximation has been studied by \cite{scricciolo2006convergence,rivoirard2012posterior}. In particular, it was shown by \cite{rivoirard2012posterior} that with the additional prior distribution $k\sim \text{Poisson}(\tau)$ on the hyperparameter, the hierarchical Bayes posterior achieves the near optimal convergence rate. The work \cite{zhang2017convergence} shows that a Gaussian mean-field variational approximation to this hierarchical Bayes posterior can achieve the same rate. Theorem \ref{thm:den-exp-EB} complements the results of \cite{rivoirard2012posterior,zhang2017convergence} by showing that the same theoretical guarantee can be established by the empirical Bayes posterior as well.

The result of Theorem \ref{thm:den-exp-EB} can be easily derived from Theorem \ref{thm:EB-ms}. In fact, thanks to the familiar prior mass condition (\ref{eq:EB-pm-0}), the proof of Theorem \ref{thm:EB-ms} directly follows the arguments used in \cite{rivoirard2012posterior,zhang2017convergence}. There is no need to develop any new technical tool to prove the result for the empirical Bayes posterior!
We also remark that the the specific choice of $N(0,\sigma^2)$ and $\text{Poisson}(\tau)$ in our prior construction can be easily replaced by more general class of distributions as considered in \cite{rivoirard2012posterior,zhang2017convergence}. We omit such an extension for the simplicity of presentation.

\paragraph{Density estimation via location-scale mixtures.}  Our second example considers Bayesian density estimation via location-scale mixture models. The location-scale mixture density is defined as
\begin{equation}\label{eq:mixture}
p(x|k,\theta^{(k)}) = \sum_{j=1}^kw_j\psi_\sigma(x-\mu_j),
\end{equation}
where $k\in\mathbb{N}_+$, $\theta^{(k)} = (\mu, w,\sigma)$ with $\sigma>0$, $\mu = (\mu_1,\cdots,\mu_k)\in\R^k$, $w = (w_1,\cdots, w_k)\in\Delta_k = \left\{w\in\R^k: w_j\geq 0\mbox{ for $1\leq j\leq k$ and }\sum_{j=1}^kw_j = 1\right\}$ and
\begin{equation}\label{eq:kernel}
\psi_\sigma(x) = \frac{1}{2\sigma\Gamma\left(1+\frac{1}{p}\right)}\exp(-(|x|/\sigma)^p),
\end{equation} 
for some positive even integer $p$. The kernel $\psi_{\sigma}(\cdot)$ has a pre-specified form, for example, Gaussian density when $p=2$, while the parameters $k$ and $\theta^{(k)} = (w,\mu,\sigma)$ are to be learned from the data.

Given i.i.d. observations $X_1,...,X_n$ sampled from some density function $f^*$, our goal is to estimate the density $f^*$ through the location-scale mixture model (\ref{eq:mixture}). We denote the probability distribution of the mixture density $p(x|k,\theta^{(k)})$ as $P_{k,\theta^{(k)}}$ and a probability distribution with a general density $f$ as $P_{f}$. In \cite{kruijer2010adaptive}, a hierarchical Bayes procedure is proposed and a nearly minimax optimal convergence rate is derived for the posterior distribution. The recent work \cite{zhang2017convergence} shows the same theoretical result can be obtained by two different variational approximations to the hierarchical Bayes posterior distribution, one with a mean-field variational class and the other involving an additional approximation through latent variables of the clustering labels.

We will construct an empirical Bayes procedure to achieve the same theoretical result by applying Theorem \ref{thm:EB-ms}. For any integer $k$, consider a prior $\theta^{(k)} = (\mu, w,\sigma)\sim\Pi^{(k)}$ by sampling $\mu_j\sim N(0,\sigma_0^2)$, $w\sim\rm{Dir}(\alpha_0,\alpha_0,\cdots,\alpha_0)$ with $\alpha_0<1$ and $\tau\sim\Gamma(a_0, b_0)$ independently. This leads to the posterior distribution
$$d\Pi^{(k)}(\theta^{(k)}|X_1,\cdots,X_n)\propto \prod_{i=1}^np(X_i|k,\theta^{(k)})d\Pi^{(k)}(\theta^{(k)}).$$
The number of clusters $k$ is selected according to
$$\wh{k}=\argmax_{k\in[n]}\log\left[w(k)\int \prod_{i=1}^np(X_i|k,\theta^{(k)})d\Pi^{(k)}(\theta^{(k)})\right],$$
where the weight function is chosen to be proportional to the probability mass function of Poisson distribution, $w(k)=\xi_0^k/k!$. Again, we remark that more general prior distributions on $\theta^{(k)} = (\mu, w,\sigma)$ and more general weight function used in \cite{kruijer2010adaptive,zhang2017convergence} can also be considered.

Next, we list the conditions on the true density function $f^*$:
\begin{enumerate}[label=B\arabic*]
\item \label{eq:B1}(Smoothness) The logarithmic density function $\log f^*$ is assumed to be locally $\alpha$-H\"older smooth. In other words, for the derivative $l_j(x) = \frac{d^j}{dx^j}\log f^*(x)$, there exists a polynomial $L(\cdot)$ and a constant $\gamma>0$ such that,
\begin{equation}\label{eq:true-condition1}
|l_{\floor{\alpha}}(x)-l_{\floor{\alpha}}(y)|\leq L(x)|x-y|^{\alpha-\floor{\alpha}},
\end{equation}
for all $x,y$ that satisfies $|x-y|\leq\gamma$. Here, the degree and the coefficients of the polynomial $L(\cdot)$ are all assumed to be constants.
Moreover, the derivative $l_j(x)$ satisfies the bound $\int |l_j(x)|^{\frac{2\alpha+\epsilon}{j}}f^*(x)dx<s_{\max}$ for all $j=1,...,\floor{\alpha}$ with  some constants $\epsilon, s_{\max}>0$.
\item \label{eq:B2}(Tail) There exist positive constants $T$, $\xi_1$, $\xi_2$, $\xi_3$ such that
\begin{equation}\label{eq:true-condition3}
f^*(x)\leq \xi_1e^{-\xi_2|x|^{\xi_3}},
\end{equation}
for all $|x|\geq T$.
\item \label{eq:B3}(Monotonicity) There exist constants $x_m<x_M$ such that $f^*$ is nondecreasing on $(-\infty, x_m)$ and is nonincreasing on $(x_M,\infty)$. Without loss of generality, we assume $f^*(x_m) = f^*(x_M) = c$ and $f^*(x)\geq c$ for all $x_m<x<x_M$ with some constant $c>0$.
\end{enumerate}
These conditions are exactly the same as in \cite{kruijer2010adaptive}. The conditions allow a well-behaved approximation to the true density by a location-scale mixture.

\begin{thm}\label{thm:den-mixture-EB}
Consider the above empirical Bayes posterior distribution defined by $\Pi^{(k)}$ and $w(k)$ with some constants $\xi_0,\sigma_0, \alpha_0, a_0, b_0>0$. Assume the density function $f^*$ satisfies (\ref{eq:B1})-(\ref{eq:B3}). We then have
$$P_{f^*}^n\int H^2(P_{\wh{k},\theta^{(\wh{k})}},P_{f^*})d\Pi^{(\wh{k})}(\theta^{(\wh{k})}|X_1,\cdots,X_n)\leq Mn^{-\frac{2\alpha}{2\alpha+1}}(\log n)^{\frac{2\alpha r}{\alpha+1}},$$
for some constant $M>0$, where $r=\frac{p}{\min\{p,\xi_3\}}+\max\{1,\frac{2}{\min\{p,\xi_3\}}\}$, with $p$ and $\xi_3$ defined in (\ref{eq:kernel}) and (\ref{eq:true-condition3}).
\end{thm}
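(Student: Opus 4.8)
The plan is to derive Theorem \ref{thm:den-mixture-EB} as a direct application of Theorem \ref{thm:EB-ms}, taking the loss to be $L(P_{f^*},P_{k,\theta^{(k)}})=nH^2(P_{f^*},P_{k,\theta^{(k)}})$ (a loss between the $n$-fold product measures, expressed through the single-observation Hellinger distance) and the rate to be $\epsilon_*^2=n\epsilon_n^2$ with $\epsilon_n^2=n^{-\frac{2\alpha}{2\alpha+1}}(\log n)^{\frac{2\alpha r}{\alpha+1}}$, so that $\epsilon_*\geq 1$ and the conclusion $P_{f^*}^n\int L\,d\Pi^{(\wh{k})}\le M\epsilon_*^2$ becomes exactly the claimed bound after dividing by $n$. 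Since the likelihood factorizes over the $n$ observations, $D_{\rho}(P_{f^*}^n\|P_{k,\theta^{(k)}}^n)=nD_{\rho}(P_{f^*}\|P_{k,\theta^{(k)}})$, so the three conditions (\ref{eq:EB-con-test})--(\ref{eq:EB-pm-0}) reduce to statements about single-observation Hellinger and R\'enyi divergences with $n\epsilon_n^2$ in the exponents. The crucial point, stressed already after Theorem \ref{thm:EB-ms}, is that these are precisely the prior-mass-and-testing conditions verified for the hierarchical Bayes posterior by \cite{kruijer2010adaptive}; consequently most of the analytic work can be imported, and only the bookkeeping of the weight $w(k)$ and the per-model prior $\Pi^{(k)}$ needs to be matched to (\ref{eq:EB-pm-model})--(\ref{eq:EB-pm-parameter}).

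For the testing and sieve conditions (\ref{eq:EB-con-test}) and (\ref{eq:EB-con-sieve}), I would take $\bar{\mathcal{K}}=\{k:k\le k_n\}$ with $k_n\asymp n\epsilon_n^2/\log n$, and define $\mathcal{F}_k$ by truncating the locations $|\mu_j|\le M_n$ and confining the bandwidth $\sigma$ to an interval $[\underline\sigma_n,\bar\sigma_n]$, with $M_n,\bar\sigma_n$ polynomial in $\log n$ and $\underline\sigma_n$ polynomially small. Standard entropy estimates for location-scale mixtures bound the Hellinger metric entropy of $\cup_{k\le k_n}\mathcal{F}_k$ by a multiple of $n\epsilon_n^2$, and the Le Cam--Birg\'e construction then yields a test $\phi$ with the errors in (\ref{eq:EB-con-test}) at most $\exp(-Cn\epsilon_n^2)$. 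The remaining-mass bound (\ref{eq:EB-con-sieve}) splits into the super-exponential Poisson tail $\sum_{k>k_n}\xi_0^k/k!$ and the prior probabilities $\Pi^{(k)}(\mathcal{F}_k^c)$, which are controlled by the Gaussian tail of the $\mu_j$ together with the Gamma tails of $\sigma$ at both endpoints; each is made smaller than $\exp(-Cn\epsilon_n^2)$ by the choice of $M_n,\underline\sigma_n,\bar\sigma_n$.

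The heart of the argument, and the step I expect to be the main obstacle, is the prior mass condition (\ref{eq:EB-pm-0}), which I would verify through the sufficient pair (\ref{eq:EB-pm-model})--(\ref{eq:EB-pm-parameter}). First one invokes the approximation theory under the assumptions (\ref{eq:B1})--(\ref{eq:B3}): the local H\"older smoothness together with the tail and monotonicity hypotheses guarantee a location-scale mixture with bandwidth $\sigma^*\asymp\epsilon_n^{1/\alpha}$ and $k^*\asymp(\sigma^*)^{-1}\log n$ components whose R\'enyi divergence from $P_{f^*}$ is at most $C_3\epsilon_n^2$; this is the construction of \cite{kruijer2010adaptive}, and it is where the exponent $r$ in the logarithmic factor enters, through the interplay between the bandwidth, the tail exponent $\xi_3$, and the kernel exponent $p$. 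With this $k^*$, the model-weight bound (\ref{eq:EB-pm-model}) is immediate from $w(k^*)=\xi_0^{k^*}/k^*!$, which gives $\log w(k^*)\gtrsim -k^*\log k^*\gtrsim -n\epsilon_n^2$ after normalization. The parameter-mass bound (\ref{eq:EB-pm-parameter}) is the delicate part: one must show that the product prior on $(\mu,w,\sigma)$ places mass at least $\exp(-C_2''n\epsilon_n^2)$ on a R\'enyi neighborhood of the approximating mixture. Here the Dirichlet prior with $\alpha_0<1$ is essential, since it concentrates on nearly sparse weight vectors and so supplies the correct $\exp(-ck^*\log(1/\epsilon_n))$ lower bound on the relevant small simplex cells; the Gaussian prior contributes the location mass and the Gamma prior the bandwidth mass, each at the cost of only $\exp(-Cn\epsilon_n^2)$. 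Assembling these three factors reproduces the lower bound required by (\ref{eq:EB-pm-parameter}).

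Once the three conditions are in place, Theorem \ref{thm:EB-ms} delivers $P_{f^*}^n\int nH^2(P_{f^*},P_{\wh{k},\theta^{(\wh{k})}})\,d\Pi^{(\wh{k})}(\theta^{(\wh{k})}|X_1,\dots,X_n)\le Mn\epsilon_n^2$, which is the assertion of the theorem after dividing through by $n$. I emphasize that, in contrast with the hierarchical Bayes analysis, no separate control of $\wh{k}$ is needed: the variational identity of Proposition \ref{prop:VB-EB-general} folds the selection of $\wh{k}$ into the same prior-mass-and-testing bookkeeping, so the proof is essentially a transcription of the existing hierarchical calculations.
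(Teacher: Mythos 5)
Your overall strategy---apply Theorem \ref{thm:EB-ms} with loss $nH^2$, rate $\epsilon_*^2=n^{\frac{1}{2\alpha+1}}(\log n)^{\frac{2\alpha r}{\alpha+1}}$, and import the testing/sieve/prior-mass calculations from the hierarchical Bayes analysis of \cite{kruijer2010adaptive}---matches the paper's. But there is a genuine gap at the step you yourself flag as the heart of the argument: you propose to verify the prior mass condition (\ref{eq:EB-pm-0}) directly with $P^*=P_{f^*}^n$, i.e.\ to exhibit a finite mixture with $D_{\rho}(P_{f^*}\|P_{k^*,\theta^{(k^*)}})\leq C_3\epsilon_n^2$. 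This cannot work in general. For any $\rho>1$, $D_{\rho}(P_{f^*}\|P_{\theta})=\frac{1}{\rho-1}\log\int (f^*)^{\rho}p_{\theta}^{1-\rho}$, and a finite mixture built from the kernel (\ref{eq:kernel}) decays like $\exp(-(|x|/\sigma)^p)$ in the far tail, while (\ref{eq:B2}) only guarantees $f^*(x)\leq \xi_1e^{-\xi_2|x|^{\xi_3}}$. Whenever $\xi_3<p$ (e.g.\ a Gaussian kernel with a heavier-tailed truth, a case the theorem covers), the integrand $\exp(-\rho\xi_2|x|^{\xi_3}+(\rho-1)(|x|/\sigma)^p)$ diverges and the R\'enyi ball in (\ref{eq:EB-pm-0}) is empty. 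Note also that \cite{kruijer2010adaptive} control first and second moments of $\log(f^*/p_{\theta})$, not a $\rho$-R\'enyi divergence with $\rho>1$, so their construction does not give you what you attribute to it.

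The paper's proof avoids this by first introducing a surrogate density $\wt{f}^*(x)=f^*(x)\indc{x\in E}/\int_Ef^*$ with $E=\{x:f^*(x)\geq n^{-4}(\log n)^{4r}\}$, verifying all three conditions of Theorem \ref{thm:EB-ms} with $P^*=P_{\wt{f}^*}^n$ (where the R\'enyi divergence is controllable because $\wt f^*$ lives on the high-density region), and then transferring the resulting risk bound back to $P_{f^*}^n$ via the reduction inequality (\ref{eq:very-long}), which contributes an extra $o\bigl(n^{-\frac{2\alpha}{2\alpha+1}}(\log n)^{\frac{2\alpha r}{\alpha+1}}\bigr)$ term. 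This truncation-and-transfer step is not optional bookkeeping: it is what makes the prior mass condition verifiable at all, and it requires its own argument (comparing expectations under $P_{f^*}^n$ and $P_{\wt f^*}^n$ and bounding $H^2(P_{\wt f^*},P_{f^*})$). Your sketches of the testing and sieve conditions are fine in outline, but the proposal as written would stall at (\ref{eq:EB-pm-0}).
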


Theorem \ref{thm:den-mixture-EB} shows that the empirical Bayes posterior distributions achieves the near minimax rate of estimating a density function that is H\"older smooth. According to Proposition \ref{prop:VB-EB-general}, this result can be viewed as the third variational approximation, in addition to the previous two variational approximations considered in \cite{zhang2017convergence}, to the hierarchical Bayes posterior in \cite{kruijer2010adaptive} that enjoys the same theoretical guarantee.

\section{A General Analysis of Empirical Bayes Posterior} \label{sec:gen}

In this section, we study the theoretical properties of empirical Bayes posterior distributions when the hyperparameter set $\Lambda$ is not necessarily discrete. One way to deal with a general $\Lambda$ is discretization. This is the technique used by \cite{rousseau2017asymptotic}, but it leads to empirical process and entropy conditions that may not be easy to verify. We introduce a new ``prior decomposition" technique, and we will demonstrate its applications through various high-dimensional estimation problems.

\subsection{A General Theorem}

Consider a general prior distribution $\theta\sim \Pi_{\lambda}$ that is supported on $\Theta$ and indexed by some hyperparameter $\lambda\in\Lambda$. The posterior distribution given some $\lambda\in\Lambda$ is defined by
$$\Pi_{\lambda}(B|X)=\frac{\int_B p(X|\theta)d\Pi_{\lambda}(\theta)}{\int p(X|\theta)d\Pi_{\lambda}(\theta)},$$
for any measurable set $B$.
Following the empirical Bayes principle, we define the empirical Bayes posterior distribution $\Pi_{\wh{\lambda}}(\cdot|X)$ by selecting $\wh{\lambda}$ via
\begin{equation}
\wh{\lambda}=\argmax_{\lambda\in\Lambda}\log\left[w(\lambda)\int p(X|\theta)d\Pi_{\lambda}(\theta)\right]. \label{eq:hat-lambda}
\end{equation}
Our goal in this section is to establish convergence rates of the empirical Bayes procedures that allow both continuous hyperparameter set $\Lambda$ and unbounded parameter space $\Theta$. We assume there exists a discrete collection of subspaces $\{\Theta_{Z}:Z\in\mathcal{Z}\}$ of $\Theta$, such that $\Pi_{\lambda}$ admits the following decomposition
\begin{equation}
\Pi_{\lambda}=\sum_{Z\in\mathcal{Z}}\nu_{\lambda}(Z)\Gamma_Z, \label{eq:prior-decomp}
\end{equation}
where for each $Z\in\mathcal{Z}$, $\Gamma_Z$ is a probability measure on the subspace $\Theta_Z$. The sequence $\{\nu_{\lambda}(Z):Z\in\mathcal{Z}\}$ is a discrete probability so that $\sum_{Z\in\mathcal{Z}}\nu_{\lambda}(Z)=1$. 

The idea behind (\ref{eq:prior-decomp}) is simple. For almost all nonparametric and high-dimensional models, there exist some underlying low-dimensional structures. The collection of such low-dimensional structures is usually a discrete set. Then, the decomposition (\ref{eq:prior-decomp}) can be understood as a two-step sampling process of $\Pi_{\lambda}$. One first sample some $Z\in\mathcal{Z}$ according to $\nu_{\lambda}(Z)$, and then given $Z$, sample $\theta|Z\sim \Gamma_Z$. This idea is fundamentally different from discretizing the hyperparameter set $\Lambda$. Instead, we choose to directly work with the underlying discrete low-dimensional structures of statistical models. We emphasize that the uniqueness of the decomposition (\ref{eq:prior-decomp}) is not important at all. We only require the existence of some decomposition in the form of (\ref{eq:prior-decomp}) such that appropriate conditions on $\nu_{\lambda}(Z)$ and $\Gamma_Z$ are satisfied.

For each $Z\in\mathcal{Z}$, the decomposition (\ref{eq:prior-decomp}) naturally induces the following quantity,
\begin{equation}
\gamma(Z)=\max_{\lambda\in\Lambda}\left[w(\lambda)\nu_{\lambda}(Z)\right]. \label{eq:effective-weight}
\end{equation}
We call $\gamma(Z)$ the effective weight on the structure $Z$. We are now well prepared to state the following theorem.

\begin{thm}\label{thm:EB-con}
Consider a non-negative loss function $L(\cdot,\cdot)$. Let $C,C_1,C_2,C_3,C_4,C_5>0$ be constants such that $C> 2C_5$. Assume there exist a rate function $\epsilon:Z\mapsto \epsilon(Z)$, $\lambda^*\in\Lambda$ and $Z^*\in\mathcal{Z}$ such that $\theta^*\in\Theta_{Z^*}$, $\epsilon(Z^*)\geq 1$, and the following conditions hold:
\begin{itemize}
\item There exists a testing function $\phi$, such that for any $\epsilon^2\geq \epsilon(Z^*)^2$ and any $Z\in\mathcal{Z}$,
\begin{align}
\nonumber P_{\theta^*}\phi &\leq \exp(-C_1\epsilon(Z^*)^2), \\
\label{eq:EB-test} \sup_{\theta\in\Theta_Z: L(\theta,\theta^*)\geq\epsilon^2} P_{\theta}(1-\phi) & \leq \exp\left(-C\epsilon^2 + C_2\left(\epsilon(Z)^2 + \epsilon(Z^*)^2\right)\right).
\end{align}
\item There exists some map $\delta:\mathcal{Z}\rightarrow\mathbb{R}_+$, such that
\begin{equation}
\sum_{Z\in\mathcal{Z}}\frac{\gamma(Z)\delta(Z)}{w(\lambda^*)\nu_{\lambda^*}(Z^*)\delta(Z^*)}\exp\left(2C_2\epsilon(Z)^2\right) \leq \exp(C_4\epsilon(Z^*)^2). \label{eq:EB-sieve}
\end{equation}
\item For some constant $\rho>1$ and the map $\delta:\mathcal{Z}\rightarrow\mathbb{R}_+$ above,
\begin{equation}
\frac{\Gamma_Z\left(\left\{\theta\in\Theta_Z: L(\theta,\theta^*)\leq \epsilon^2\right\}\right)}{\Gamma_{Z^*}\left(\left\{\theta\in\Theta_{Z^*}: D_{\rho}(P_{\theta^*}\|P_{\theta})\leq C_3\epsilon(Z^*)^2\right\}\right)} \leq \frac{\delta(Z)}{\delta(Z^*)}\exp\left(C_5\epsilon^2+C_2\left(\epsilon(Z)^2+\epsilon(Z^*)^2\right)\right), \label{eq:EB-pm}
\end{equation}
for any $\epsilon^2\geq \epsilon(Z^*)^2$ and $Z\in\mathcal{Z}$.
\end{itemize}
Then, for $\wh{\lambda}$ defined by (\ref{eq:hat-lambda}), we have
$$P_{\theta^*}\Pi_{\wh{\lambda}}\left(L(\theta,\theta^*)>M\epsilon(Z^*)^2\Big|X\right)\leq 4\exp(-C'\epsilon(Z^*)^2),$$
for some constants $M,C'>0$ only depending on $C,C_1,C_2,C_3,C_4,C_5$ and $\rho$.
\end{thm}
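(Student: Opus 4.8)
The plan is to turn the empirical Bayes posterior mass of the bad set into a ratio that no longer involves the data-driven $\wh{\lambda}$, and then run a relative (ratio-form) version of the classical prior-mass-and-testing argument. Write $\epsilon_*=\epsilon(Z^*)$ and let $r(X|\theta)=p(X|\theta)/p(X|\theta^*)$. Using the decomposition (\ref{eq:prior-decomp}) one can write $\Pi_{\wh{\lambda}}(L(\theta,\theta^*)>M\epsilon_*^2\mid X)=N_{\wh{\lambda}}/D_{\wh{\lambda}}$, with $N_{\wh{\lambda}}=\sum_Z\nu_{\wh{\lambda}}(Z)\int_{L>M\epsilon_*^2}p(X|\theta)\,d\Gamma_Z$ and $D_{\wh{\lambda}}=\sum_Z\nu_{\wh{\lambda}}(Z)\int p(X|\theta)\,d\Gamma_Z$. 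The crucial observation is the pair of bounds $w(\wh{\lambda})\nu_{\wh{\lambda}}(Z)\le\gamma(Z)$ for every $Z$, directly from (\ref{eq:effective-weight}), together with the optimality of $\wh{\lambda}$ in (\ref{eq:hat-lambda}), which yields $w(\wh{\lambda})D_{\wh{\lambda}}\ge w(\lambda^*)\nu_{\lambda^*}(Z^*)\int p(X|\theta)\,d\Gamma_{Z^*}$. Multiplying $N_{\wh{\lambda}}/D_{\wh{\lambda}}$ by $w(\wh{\lambda})/w(\wh{\lambda})$ and applying these two bounds eliminates $\wh{\lambda}$ completely, giving $N_{\wh{\lambda}}/D_{\wh{\lambda}}\le\big[\sum_Z\gamma(Z)\int_{L>M\epsilon_*^2}r(X|\theta)\,d\Gamma_Z\big]/\big[w(\lambda^*)\nu_{\lambda^*}(Z^*)\int r(X|\theta)\,d\Gamma_{Z^*}\big]$, which is deterministic in the choice of index.

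Next I would lower bound the denominator. Let $B_{Z^*}=\{\theta\in\Theta_{Z^*}:D_{\rho}(P_{\theta^*}\|P_\theta)\le C_3\epsilon_*^2\}$ be the R\'enyi neighborhood in (\ref{eq:EB-pm}). Restricting the integral to $B_{Z^*}$, normalizing $\Gamma_{Z^*}$ there, and applying Jensen's inequality to the convex map $s\mapsto s^{-(\rho-1)}$ shows the $P_{\theta^*}$-expectation of $\big(\int_{B_{Z^*}}r\,d\Gamma_{Z^*}/\Gamma_{Z^*}(B_{Z^*})\big)^{-(\rho-1)}$ is at most $\sup_{\theta\in B_{Z^*}}\exp((\rho-1)D_\rho(P_{\theta^*}\|P_\theta))\le\exp((\rho-1)C_3\epsilon_*^2)$, where I use the identity $P_{\theta^*}[r(X|\theta)^{-(\rho-1)}]=\exp((\rho-1)D_\rho(P_{\theta^*}\|P_\theta))$. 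A Markov bound then produces an event $\mathcal{E}$ with $P_{\theta^*}(\mathcal{E}^c)\le\exp(-c_0\epsilon_*^2)$ on which $\int r\,d\Gamma_{Z^*}\ge\Gamma_{Z^*}(B_{Z^*})\exp(-(C_3+c_0)\epsilon_*^2)$ for a suitable $c_0>0$.

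For the numerator I would take the $P_{\theta^*}$-expectation and use $P_{\theta^*}[(1-\phi)r(X|\theta)]=P_\theta(1-\phi)$ to reduce it to $\sum_Z\gamma(Z)\int_{L>M\epsilon_*^2}P_\theta(1-\phi)\,d\Gamma_Z$. Applying the test bound (\ref{eq:EB-test}) pointwise at $\epsilon^2=L(\theta,\theta^*)$ replaces $P_\theta(1-\phi)$ by $\exp(-C\,L(\theta,\theta^*)+C_2(\epsilon(Z)^2+\epsilon_*^2))$, and a dyadic slicing of $\{L>M\epsilon_*^2\}$ combined with the prior-mass bound (\ref{eq:EB-pm}) controls each shell; because $C>2C_5$ the resulting geometric series converges and is dominated by its first term $\exp(-(C-2C_5)M\epsilon_*^2)$. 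This bounds the $Z$-summand by $\Gamma_{Z^*}(B_{Z^*})\tfrac{\delta(Z)}{\delta(Z^*)}\exp(2C_2(\epsilon(Z)^2+\epsilon_*^2)-(C-2C_5)M\epsilon_*^2)$, after which the sieve condition (\ref{eq:EB-sieve}) collapses $\sum_Z\gamma(Z)\delta(Z)\exp(2C_2\epsilon(Z)^2)$ into $w(\lambda^*)\nu_{\lambda^*}(Z^*)\delta(Z^*)\exp(C_4\epsilon_*^2)$. At this point every factor of $\Gamma_{Z^*}(B_{Z^*})$, $\delta(Z^*)$ and $w(\lambda^*)\nu_{\lambda^*}(Z^*)$ cancels against the denominator bound from $\mathcal{E}$, leaving, after taking the $P_{\theta^*}$-expectation restricted to $\mathcal{E}$, a bound $\lesssim\exp((2C_2+C_3+C_4+c_0)\epsilon_*^2-(C-2C_5)M\epsilon_*^2)$. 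Choosing $M$ large makes this $\le\exp(-C'\epsilon_*^2)$; assembling this with $P_{\theta^*}\phi\le\exp(-C_1\epsilon_*^2)$, the $\mathcal{E}^c$ contribution, and the split $\Pi_{\wh{\lambda}}(\cdots)\le\phi+(1-\phi)N_{\wh{\lambda}}/D_{\wh{\lambda}}$ gives the claimed $4\exp(-C'\epsilon_*^2)$.

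The main obstacle, and the real content of the theorem, is the first step: controlling the random selector $\wh{\lambda}$. Since $\wh{\lambda}$ depends on $X$ in a complicated, non-local way, one cannot simply condition on it or union-bound over a discretization without paying extra entropy. The device that resolves this is the effective weight $\gamma(Z)$ of (\ref{eq:effective-weight}): it simultaneously upper bounds $w(\wh{\lambda})\nu_{\wh{\lambda}}(Z)$ uniformly over all realizations of $\wh{\lambda}$ in the numerator, while the optimality (\ref{eq:hat-lambda}) furnishes a matching $\lambda^*$ lower bound in the denominator. Once this reduction is in place the rest is a ratio-form adaptation of Ghosal--Ghosh--van der Vaart, and the only remaining care is the bookkeeping of constants, where $C>2C_5$ is exactly what guarantees summability both across the dyadic loss shells and across the index set $\mathcal{Z}$.
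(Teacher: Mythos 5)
Your proposal is correct and follows essentially the same route as the paper's proof: the same use of the effective weight $\gamma(Z)$ together with the optimality of $\wh{\lambda}$ to eliminate the data-dependent index, the same R\'enyi-neighborhood event (your $\mathcal{E}$ is the paper's $H$) to lower bound the denominator, and the same shell-decomposition of $\{L>M\epsilon(Z^*)^2\}$ paired with the test and prior-mass-ratio bounds before invoking the sieve condition. The only cosmetic difference is that the paper slices into arithmetic shells of width $M\epsilon(Z^*)^2$ rather than dyadic ones; both yield a convergent geometric series under $C>2C_5$.
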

\begin{proof}
Define
$$H=\left\{\int\frac{p(X|\theta)}{p(X|\theta^*)}\geq\exp\left(-(C_3+1)\epsilon(Z^*)^2\right)\Gamma_{Z^*}(K)\right\},$$
with $K=\left\{\theta\in\Theta_{Z^*}: D_{\rho}(P_{\theta^*}\|P_{\theta})\leq C_3\epsilon(Z^*)^2\right\}$. Then, for the testing function $\phi$ that satisfies (\ref{eq:EB-test}), we have
\begin{eqnarray}
\nonumber && P_{\theta^*}\Pi_{\wh{\lambda}}\left(L(\theta,\theta^*)>M\epsilon(Z^*)^2|X\right) \\
\label{eq:3-terms} &\leq& P_{\theta^*}\phi + P_{\theta^*}(H^c) + P_{\theta^*}\frac{\int_{L(\theta,\theta^*)>M\epsilon(Z^*)^2} p(X|\theta)d\Pi_{\wh{\lambda}}(\theta)}{\int p(X|\theta)d\Pi_{\wh{\lambda}}(\theta)}(1-\phi)\indc{H}.
\end{eqnarray}
By (\ref{eq:EB-test}), we have $P_{\theta^*}\phi \leq \exp(-C_1\epsilon(Z^*)^2)$. To bound $P_{\theta^*}(H^c)$, we introduce a probability measure $\wt{\Gamma}_{Z^*}$, defined by
$$\wt{\Gamma}_{Z^*}(B)=\frac{\Gamma_{Z^*}(B\cap K)}{\Gamma_{Z^*}(K)},$$
for any measurable set $B$. Then,
\begin{eqnarray*}
P_{\theta^*}(H^c) &=& \mathbb{P}_{\theta^*}\left(\int\frac{p(X|\theta)}{p(X|\theta^*)}<\exp\left(-(C_3+1)\epsilon(Z^*)^2\right)\Gamma_{Z^*}(K)\right) \\
&\leq& \mathbb{P}_{\theta^*}\left(\int_K\frac{p(X|\theta)}{p(X|\theta^*)}<\exp\left(-(C_3+1)\epsilon(Z^*)^2\right)\Gamma_{Z^*}(K)\right) \\
&\leq& \exp(-(C_3+1)(\rho-1)\epsilon(Z^*)^2)\mathbb{E}_{\theta^*}\left(\int_K\frac{p(X|\theta)}{p(X|\theta^*)}d\wt{\Gamma}_{Z^*}(\theta)\right)^{-(\rho-1)} \\
&\leq& \exp(-(C_3+1)(\rho-1)\epsilon(Z^*)^2)\int_K\exp\left((\rho-1) D_{\rho}(P_{\theta^*}\|P_{\theta})\right)d\wt{\Gamma}_{Z^*}(\theta) \\
&\leq& \exp\left(-(\rho-1) \epsilon(Z^*)^2\right),
\end{eqnarray*}
where the last inequality uses the definition of $K$. Now we consider the last term of (\ref{eq:3-terms}). By the definition of $\wh{\lambda}$ in (\ref{eq:hat-lambda}), we have
\begin{eqnarray}
\nonumber && P_{\theta^*}\frac{\int_{L(\theta,\theta^*)>M\epsilon(Z^*)^2} p(X|\theta)d\Pi_{\wh{\lambda}}(\theta)}{\int p(X|\theta)d\Pi_{\wh{\lambda}}(\theta)}(1-\phi)\indc{H} \\
\nonumber &=& P_{\theta^*}\frac{\sum_{Z\in\mathcal{Z}}w(\wh{\lambda})\nu_{\wh{\lambda}}(Z)\int_{L(\theta,\theta^*)>M\epsilon(Z^*)^2} \frac{p(X|\theta)}{p(X|\theta^*)}d\Gamma_{Z}(\theta)}{w(\wh{\lambda})\int \frac{p(X|\theta)}{p(X|\theta^*)}d\Pi_{\wh{\lambda}}(\theta)}(1-\phi)\indc{H} \\
\label{eq:prelim-b-3rd} &\leq& P_{\theta^*}\frac{\sum_{Z\in\mathcal{Z}}\gamma(Z)\int_{L(\theta,\theta^*)>M\epsilon(Z^*)^2} \frac{p(X|\theta)}{p(X|\theta^*)}d\Gamma_{Z}(\theta)}{w(\lambda^*)\int \frac{p(X|\theta)}{p(X|\theta^*)}d\Pi_{\lambda^*}(\theta)}(1-\phi)\indc{H}.
\end{eqnarray}
When the event $H$ holds, the denominator of (\ref{eq:prelim-b-3rd}) can be lower bounded by
\begin{eqnarray*}
w(\lambda^*)\int \frac{p(X|\theta)}{p(X|\theta^*)}d\Pi_{\lambda^*}(\theta) &\geq& w(\lambda^*)\nu_{\lambda^*}(Z^*)\int \frac{p(X|\theta)}{p(X|\theta^*)}d\Gamma_{Z^*}(\theta) \\
&\geq& w(\lambda^*)\nu_{\lambda^*}(Z^*)\exp\left(-(C_3+1)\epsilon(Z^*)^2\right)\Gamma_{Z^*}(K).
\end{eqnarray*}
Therefore,
\begin{eqnarray*}
&& P_{\theta^*}\frac{\int_{L(\theta,\theta^*)>M\epsilon(Z^*)^2} p(X|\theta)d\Pi_{\wh{\lambda}}(\theta)}{\int p(X|\theta)d\Pi_{\wh{\lambda}}(\theta)}(1-\phi)\indc{H} \\
&\leq& \exp((C_3+1)\epsilon(Z^*)^2)\sum_{Z\in\mathcal{Z}}\frac{\gamma(Z)}{w(\lambda^*)\nu_{\lambda^*}(Z^*)}\frac{1}{\Gamma_{Z^*}(K)}\int_{L(\theta,\theta^*)>M\epsilon(Z^*)^2}P_{\theta}(1-\phi)d\Gamma_Z(\theta).
\end{eqnarray*}
Define $R_l(Z)=\left\{\theta\in\Theta_Z: lM\epsilon(Z^*)^2<L(\theta,\theta^*)\leq (l+1)M\epsilon(Z^*)^2\right\}$. Then, we have
\begin{eqnarray}
\nonumber && \frac{1}{\Gamma_{Z^*}(K)}\int_{L(\theta,\theta^*)>M\epsilon(Z^*)^2}P_{\theta}(1-\phi)d\Gamma_Z(\theta) \\
\nonumber &=& \frac{1}{\Gamma_{Z^*}(K)}\sum_{l=1}^{\infty}\int_{R_l(Z)}P_{\theta}(1-\phi)d\Gamma_Z(\theta) \\
\nonumber &\leq& \sum_{l=1}^{\infty}\frac{\Gamma_Z(R_l(Z))}{\Gamma_{Z^*}(K)}\sup_{\theta\in R_l(Z)}P_{\theta}(1-\phi) \\
\label{eq:pad1} &\leq& \exp(2C_2(\epsilon(Z^*)^2+\epsilon(Z)^2))\frac{\delta(Z)}{\delta(Z^*)}\sum_{l=1}^{\infty}\exp\left(-(C-2C_5)lM\epsilon(Z^*)^2\right) \\
\label{eq:pad2} &\leq& 2\exp\left(-\left((C-2C_5)M-2C_2\right)\epsilon(Z^*)^2 + 2C_2\epsilon(Z)^2\right)\frac{\delta(Z)}{\delta(Z^*)} \\
\label{eq:pad3} &\leq& 2\exp\left(-\frac{(C-2C_5)M}{2}\epsilon(Z^*)^2 + 2C_2\epsilon(Z)^2\right)\frac{\delta(Z)}{\delta(Z^*)}
\end{eqnarray}
The inequality (\ref{eq:pad1}) uses (\ref{eq:EB-test}) and (\ref{eq:EB-pm}). The next two bounds (\ref{eq:pad2}) and (\ref{eq:pad3}) are by $C> 2C_5$ and $M\geq \frac{4C_2}{C-2C_5}$. Having obtained the bound (\ref{eq:pad3}), we then have
\begin{eqnarray*}
&& P_{\theta^*}\frac{\int_{L(\theta,\theta^*)>M\epsilon(Z^*)^2} p(X|\theta)d\Pi_{\wh{\lambda}}(\theta)}{\int p(X|\theta)d\Pi_{\wh{\lambda}}(\theta)}(1-\phi)\indc{H} \\
&\leq& 2\exp\left(-\left(\frac{M(C-2C_5)}{2}-C_3-1\right)\epsilon(Z^*)^2\right)\sum_{Z\in\mathcal{Z}}\frac{\gamma(Z)}{w(\lambda^*)\nu_{\lambda^*}(Z^*)}\frac{\delta(Z)}{\delta(Z^*)}\exp(2C_2\epsilon(Z)^2) \\
&\leq& 2\exp\left(-\left(\frac{M(C-2C_5)}{2}-C_3-C_4-1\right)\epsilon(Z^*)^2\right),
\end{eqnarray*}
where the last inequality is by (\ref{eq:EB-sieve}). Set $M$ to be sufficiently large, and we obtain the desired conclusion that (\ref{eq:3-terms}) is bounded by $4\exp(-C'\epsilon(Z^*)^2)$ with some $C'>0$.
\end{proof}

Theorem \ref{thm:EB-con} derives a high-probability convergence rate for $\Pi_{\wh{\lambda}}(\cdot|X)$. A bound for $P_{\theta^*}\int L(\theta,\theta^*)d\Pi_{\wh{\lambda}}(\theta|X)$ with the same rate can also be derived by integrating up the tail probability. The three conditions (\ref{eq:EB-test})-(\ref{eq:EB-pm}), though complicated, are actually quite easy to work with. In fact, the three conditions (\ref{eq:EB-test})-(\ref{eq:EB-pm}) directly correspond to the three standard ``prior mass and testing" conditions (\ref{eq:con-test})-(\ref{eq:con-pm}). Condition (\ref{eq:EB-test}) guarantees the existence of a test that is adaptive to the underlying structure. Condition (\ref{eq:EB-sieve}) plays the same role as (\ref{eq:con-sieve}) that controls the complexity of the prior distribution. Finally, we have a prior mass ratio condition in (\ref{eq:EB-pm}). The prior mass ratio bound is a standard way to deal with unbounded parameter space $\Theta$ in the literature \citep{ghosal2000convergence,castillo2012needles}.

Verifying (\ref{eq:EB-test})-(\ref{eq:EB-pm}) is just some direct calculations. The main trick is to choose appropriate rate function $\epsilon(Z)$ and the map $\delta(Z)$ that are natural to the problem. One usually choose $\epsilon(Z)^2$ to be the minimax rate of the problem. On the other hand, choosing $\delta(Z)$ is a more subtle issue. For most applications, the naive choice $\delta(Z)=1$ suffices. However, we will show in Section \ref{sec:slm} that sometimes a smart choice of $\delta(Z)$ can be crucial to the result.

\subsection{Applications to Spike-and-Slab Priors}

A leading example of continuous hyperparameter is the spike-and-slab prior \citep{mitchell1988bayesian,george1993variable} that models sparse vectors. Given some $\lambda\in[n]$, the spike-and-slab prior on $\mathbb{R}^p$ is defined by
\begin{equation}
\Pi_{\lambda}=\bigotimes_{j=1}^p\left((1-\lambda)\delta_0+\lambda G\right), \label{eq:spike-slab-def}
\end{equation}
where $\delta_0$ is a delta measure at $0$, and $G$ is some slab distribution. In this paper, we consider $G$ to be the Laplace distribution with density function $g(x)=\frac{\tau}{2}e^{-\tau|x|}$. For $\theta\sim \Pi_{\lambda}$, an equivalent sampling process is to first sample independent latent variables $z_1,\cdots,z_p\sim\text{Bernoulli}(\lambda)$, and then sample $\theta_j|z_j\sim (1-z_j)\delta_0+z_jG$ for each $j\in[p]$. From this perspective, we can equivalently write the prior distribution as
$$\Pi_{\lambda}=\sum_{S\subset[p]}(1-\lambda)^{p-|S|}\lambda^{|S|}\left(\bigotimes_{j\in S^c}\delta_0\right) \otimes \left(\bigotimes_{j\in S}G\right),$$
which is a natural prior decomposition in the form of (\ref{eq:prior-decomp}). To formally put $\Pi_{\lambda}$ in the framework of (\ref{eq:prior-decomp}), we let $Z=S$, and define $\Theta_S=\{\theta\in\mathbb{R}^p:\theta_j=0\text{ for all }j\notin S\}$. Then, we can write $\Pi_{\lambda}$ as (\ref{eq:prior-decomp}) with
\begin{equation}
\Gamma_S=\left(\bigotimes_{j\in S^c}\delta_0\right) \otimes \left(\bigotimes_{j\in S}G\right), \label{eq:Gamma-spike-slab}
\end{equation}
and
$$\nu_{\lambda}(S)=(1-\lambda)^{p-|S|}\lambda^{|S|}.$$
To select $\lambda$, the most popular method in hierarchical Bayes is to further sample $\lambda$ from a conjugate beta prior. We therefore use the weight function
\begin{equation}
w(\lambda)=\lambda^{\alpha-1}(1-\lambda)^{\beta-1}, \label{eq:spike-and-slab-weight}
\end{equation}
for our empirical Bayes procedure. Then, it is easy to calculate the effective weight on $S$,
$$\gamma(S)=\max_{\lambda\in [0,1]}\left[\nu_{\lambda}(S)w(\lambda)\right]=\left(\frac{\alpha+|S|-1}{p+\alpha+\beta-2}\right)^{\alpha+|S|-1}\left(\frac{p-|S|+\beta-1}{p+\alpha+\beta-2}\right)^{p-|S|+\beta-1}.$$
Let us first verify the condition (\ref{eq:EB-sieve}), since this condition is independent of the likelihood function.
\begin{lemma}\label{lem:spsl-sum}
Assume $\frac{\alpha}{e(p+\beta-1)}\geq p^{-C_1'}$ and $\frac{e(\alpha+p)}{\beta-1}\leq p^{-C_2'}$ for some sufficiently large constants $C_1'>C_2'>0$. There exists some constant $C_4>0$, such that for any $S^*\subset[p]$, there is some $\lambda^*\in[0,1]$, such that
$$\sum_{S\subset[p]}\frac{\gamma(S)}{w(\lambda^*)\nu_{\lambda^*}(S^*)}\exp\left(2C_2|S|\log p\right)\leq \exp\left(C_4|S^*|\log p\right).$$
\end{lemma}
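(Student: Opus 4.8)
The plan is to exploit two structural facts. First, the effective weight $\gamma(S)$ depends on $S$ only through $|S|$, so I write $g(s)$ for its common value on $\{S:|S|=s\}$; grouping the $2^p$ subsets by cardinality collapses the sum to $\sum_{s=0}^p\binom{p}{s}g(s)p^{2C_2 s}$. Second, I am free to pick $\lambda^*$, and the optimal choice is the mode $\lambda^*=\frac{|S^*|+\alpha-1}{p+\alpha+\beta-2}$ of the Beta-type kernel $\lambda\mapsto\nu_\lambda(S^*)w(\lambda)=\lambda^{|S^*|+\alpha-1}(1-\lambda)^{p-|S^*|+\beta-1}$, for which $w(\lambda^*)\nu_{\lambda^*}(S^*)=\gamma(S^*)=g(s^*)$ with $s^*=|S^*|$. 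With these two reductions the claim becomes
$$\sum_{s=0}^{p}\binom{p}{s}\,g(s)\,p^{2C_2 s}\ \le\ g(s^*)\,p^{C_4 s^*}.$$
This needs $\alpha\ge1$ so that the mode is interior; I would treat the degenerate case $s^*=0$ separately, where the left side is only bounded by an absolute constant.

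For summability I would apply a ratio test to $b_s:=\binom{p}{s}g(s)p^{2C_2 s}$. Writing $N=p+\alpha+\beta-2$, $a=s+\alpha-1$, and $b=p-s+\beta-1$, the explicit formula $g(s)=(a/N)^a((N-a)/N)^{N-a}$ gives, after an elementary rearrangement,
$$\frac{g(s+1)}{g(s)}=(a+1)\Big(1+\tfrac1a\Big)^{a}\,\frac1b\Big(1-\tfrac1b\Big)^{b-1}\ \le\ \frac{e\,(s+\alpha)}{p-s+\beta-1},$$
using $(1+1/a)^a\le e$ and $(1-1/b)^{b-1}\le1$. Combining with $\binom{p}{s+1}/\binom{p}{s}=(p-s)/(s+1)\le p$ and $\frac{s+\alpha}{p-s+\beta-1}\le\frac{p+\alpha}{\beta-1}$ yields $\frac{b_{s+1}}{b_s}\le p^{2C_2+1}\cdot\frac{e(p+\alpha)}{\beta-1}\le p^{\,2C_2+1-C_2'}$ by the second hypothesis. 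Choosing $C_2'>2C_2+1$ (legitimate since $C_2'$ may be taken large) makes this ratio at most $p^{-1}$, so $b_s$ decays geometrically and $\sum_{s}b_s\le 2b_0=2g(0)$ for $p\ge2$.

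It then remains to compare $g(0)$ with $g(s^*)$. I would compute $\frac{g(0)}{g(s^*)}=\frac{a_0^{a_0}b_0^{b_0}}{a_*^{a_*}b_*^{b_*}}$ with $a_0=\alpha-1$, $b_0=p+\beta-1$, $a_*=a_0+s^*$, $b_*=b_0-s^*$, and bound the two factors separately. The ``$a$'' factor satisfies $\frac{a_0^{a_0}}{a_*^{a_*}}\le(s^*+\alpha-1)^{-s^*}$, and the ``$b$'' factor satisfies $\frac{b_0^{b_0}}{b_*^{b_*}}=b_0^{s^*}(1+\tfrac{s^*}{b_0-s^*})^{b_0-s^*}\le(e\,b_0)^{s^*}$. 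Hence $\frac{g(0)}{g(s^*)}\le\big(\frac{e(p+\beta-1)}{s^*+\alpha-1}\big)^{s^*}$. The first hypothesis gives $e(p+\beta-1)\le\alpha p^{C_1'}$, and for $s^*\ge1$ one has $\frac{\alpha}{s^*+\alpha-1}\le1$, so $\frac{g(0)}{g(s^*)}\le p^{C_1's^*}$. Chaining everything, the quantity in the lemma is at most $\frac{2g(0)}{g(s^*)}\le 2p^{C_1's^*}\le p^{(C_1'+1)s^*}$, so the claim holds with $C_4=C_1'+1$ (consistent with $C_1'>C_2'>2C_2+1$).

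The main obstacle is calibrating the two hypotheses against each other: the second condition (a very large $\beta$) is exactly what forces the geometric decay of $b_s$ and hence summability, whereas the first condition (a lower bound on $\alpha$ relative to $p+\beta-1$) is precisely what prevents $g(0)/g(s^*)$ from exploding, since $g(0)$ can be astronomically larger than $g(s^*)$ when $\beta\gg p$. The remaining delicate points are purely at the boundary, namely $\alpha$ near $1$ (interior-mode formula) and $s^*=0$ (where the reduction to $g(0)$ and the target $p^{C_4 s^*}=1$ degenerate), both of which I would dispatch by hand.
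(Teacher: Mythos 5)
Your proposal is correct and follows essentially the same route as the paper: the same choice $\lambda^*=\frac{\alpha+|S^*|-1}{p+\alpha+\beta-2}$ so that $w(\lambda^*)\nu_{\lambda^*}(S^*)=\gamma(S^*)$, the same reduction to cardinalities, and the same two ratio estimates $p^{-C_1'}\leq\gamma_{s+1}/\gamma_s\leq p^{-C_2'}$ drawn from the two hypotheses. The only difference is bookkeeping — you anchor the geometric sum at $s=0$ and then compare $g(0)$ to $g(s^*)$ by a direct computation, whereas the paper telescopes the ratio directly against $\gamma_{s^*}$ — and your explicit flagging of the boundary cases $\alpha<1$ and $S^*=\emptyset$ is if anything more careful than the paper's treatment.
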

Lemma \ref{lem:spsl-sum} shows that the condition (\ref{eq:EB-sieve}) holds with $\epsilon(S)^2=|S|\log p$ and $\delta(S)=1$. The requirement on $\alpha$ and $\beta$ can be easily satisfied. For example, one can choose $\alpha=1$ and $\beta = p^C$ a sufficiently large degree $C>0$.
Next, we will verify the other two conditions (\ref{eq:EB-test}) and (\ref{eq:EB-pm}) in the context of Gaussian sequence model and sparse linear regression.

\paragraph{Gaussian sequence model.}

The first example that we apply the spike-and-slab prior is the sparse sequence model with data generating process $Y\sim P_{\theta^*}=N(\theta^*,I_p)$. We assume the vector $\theta^*\in\mathbb{R}^p$ is sparse and thus there exists some subset $S^*\subset[p]$ such that $\theta^*\in\Theta_{S^*}$. We use the empirical Bayes posterior $\Pi_{\wh{\lambda}}(\cdot|Y)$ for statistical estimation, where $\Pi_{\lambda}(\cdot|Y)$ is the posterior distribution induced by the spike-and-slab prior (\ref{eq:spike-slab-def}), and $\wh{\lambda}$ is selected according to (\ref{eq:hat-lambda}) with weight (\ref{eq:spike-and-slab-weight}).

Having verified the condition (\ref{eq:EB-sieve}) by Lemma \ref{lem:spsl-sum}, we need to check the remaining two conditions (\ref{eq:EB-test}) and (\ref{eq:EB-pm}) in the setting of Gaussian sequence model. We use the loss function $L(\theta,\theta^*)=\|\theta-\theta^*\|^2$ and set $\rho=2$ so that we also have $D_{\rho}(P_{\theta^*}\|P_{\theta})=\|\theta-\theta^*\|^2$. Recall that for any subset $S\subset[p]$, we have $\epsilon(S)=|S|\log p$ and $\delta(S)=1$.

For the testing condition (\ref{eq:EB-test}), we first define
$$\phi_S=\indc{\|Y-\theta^*\|_{S\cup S^*}^2 > 6\left(|S|\log p + |S^*|\log p\right)},$$
where we use the notation $\|v\|_S^2=\sum_{j\in S}v_j^2$. The overall test is given by
\begin{equation}
\phi=\max_{S\subset[p]}\phi_S. \label{eq:test-sequence}
\end{equation}
The following testing error bound is straightforward by a union bound argument and a standard chi-squared deviation inequality.
\begin{lemma}\label{lem:spsl-seq-test}
Assume $n\geq 8$.
Under the setting of Gaussian sequence model, the testing procedure (\ref{eq:test-sequence}) satisfies
\begin{eqnarray*}
P_{\theta^*}\phi &\leq& \exp(-|S^*|\log p), \\
\sup_{\theta\in\Theta_S:\|\theta-\theta^*\|^2\geq\epsilon^2}P_{\theta}(1-\phi) &\leq& \exp\left(-\frac{2}{3}\epsilon^2 + 5(|S|\log p + |S^*|\log p)\right),
\end{eqnarray*}
for any $\epsilon^2\geq |S^*|\log p$ and any $S\subset[p]$.
\end{lemma}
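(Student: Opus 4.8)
The plan is to establish the two displayed inequalities separately, reducing each to a tail bound for a (possibly noncentral) chi-square random variable. Throughout I would exploit the structural fact that both $\theta$ and $\theta^*$ are supported on $S\cup S^*$ (since $\theta\in\Theta_S$ and $\theta^*\in\Theta_{S^*}$), so that $\theta-\theta^*$ lies entirely in the coordinates that the statistic $\|Y-\theta^*\|_{S\cup S^*}^2$ inspects and $\|\theta-\theta^*\|_{S\cup S^*}=\|\theta-\theta^*\|$. I would also record that $|S\cup S^*|\le |S|+|S^*|$, which lets me replace the degrees of freedom of every chi-square below by $|S|+|S^*|$ and thereby match the $|S|\log p+|S^*|\log p$ bookkeeping in the statement. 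Note that $\phi$ is a ``theoretical'' test that is allowed to depend on the unknown $\theta^*$, as is standard in verifying conditions like (\ref{eq:EB-test}).

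For the type I error, I would expand $\phi=\max_{S\subset[p]}\phi_S$ by a union bound, $P_{\theta^*}\phi\le\sum_{S\subset[p]}P_{\theta^*}(\phi_S=1)$. Under $P_{\theta^*}$ the vector $Y-\theta^*$ is standard Gaussian, so $\|Y-\theta^*\|_{S\cup S^*}^2\sim\chi^2_{|S\cup S^*|}$, and $\phi_S=1$ is the event that this exceeds $6(|S|+|S^*|)\log p\ge 6|S\cup S^*|\log p$. A standard Chernoff bound for the upper tail of a chi-square (e.g.\ $P(\chi^2_d\ge t)\le e^{-t/4}2^{d/2}$) then gives, for each $S$, a bound of the form $\exp\!\big(-\tfrac32(|S|+|S^*|)\log p+\tfrac12(|S|+|S^*|)\log 2\big)$. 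Grouping the subsets by their cardinality $s=|S|$ and using $\binom{p}{s}\le p^s=e^{s\log p}$ turns the sum into a geometric series in $s$ with ratio $e^{-(\log p-\log 2)/2}=\sqrt{2/p}$; this converges (and is at most $2$) precisely when $p\ge 8$, which I take to be the meaning of the hypothesis $n\ge 8$. The residual factor in $|S^*|$ is then absorbed to yield $\exp(-|S^*|\log p)$, the case $S^*=\emptyset$ being trivial since the claimed bound is then $1$.

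For the type II error I would use $1-\phi\le 1-\phi_S$, so that $P_\theta(1-\phi)\le P_\theta(\phi_S=0)=P_\theta\big(\|Y-\theta^*\|_{S\cup S^*}^2\le 6(|S|+|S^*|)\log p\big)$. Under $P_\theta$ with $\theta\in\Theta_S$, the quantity $\|Y-\theta^*\|_{S\cup S^*}^2$ is a noncentral chi-square with $|S\cup S^*|$ degrees of freedom and noncentrality $\|\theta-\theta^*\|^2\ge\epsilon^2$, and the event above is its lower tail. I would control this either directly by a Chernoff bound for the noncentral chi-square, or, more transparently, by the triangle inequality: on $\{\phi_S=0\}$ one has $\|Y-\theta\|_{S\cup S^*}\ge\|\theta-\theta^*\|-\|Y-\theta^*\|_{S\cup S^*}\ge\epsilon-\sqrt{6(|S|+|S^*|)\log p}$, which reduces the problem to an upper-tail bound for the central chi-square $\|Y-\theta\|_{S\cup S^*}^2$. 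Optimizing the Chernoff parameter and absorbing the cross term together with the degrees-of-freedom contribution into the additive slack $5(|S|\log p+|S^*|\log p)$ then produces an exponential rate of the stated form; the bound is only nontrivial in the regime where $\epsilon^2$ comfortably exceeds the threshold $6(|S|+|S^*|)\log p$, so the reduction is valid exactly where it is needed.

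The routine part is the Gaussian/chi-square tail arithmetic; the part that needs care is the constant bookkeeping. For the type I bound the delicate point is making the union bound over the $2^p$ subsets converge, which forces the $e^{s\log p}$ coming from the binomial coefficient to be beaten by the $e^{-\frac32 s\log p}$ coming from the tail bound, and this is exactly what pins down the requirement $p\ge 8$. For the type II bound the delicate point is extracting the right coefficient on $\epsilon^2$ from the (noncentral) chi-square deviation inequality while dumping all lower-order terms (the degrees of freedom $|S\cup S^*|$ and the threshold) into the $5(|S|\log p+|S^*|\log p)$ allowance; choosing the Chernoff parameter and splitting the cross term $2\epsilon\sqrt{6(|S|+|S^*|)\log p}$ so as to land on the advertised constant is the only genuinely fiddly step, and I expect it to be the main obstacle.
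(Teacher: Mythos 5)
Your Type~I argument is the same as the paper's: a union bound over $S$, a Chernoff-type chi-square tail (the paper uses $\mathbb{P}(\chi^2_d>t)\leq\exp(\tfrac{2}{3}d-\tfrac{t}{3})$ from Laurent--Massart where you use $e^{-t/4}2^{d/2}$; either works), and a geometric series over the cardinality $s=|S|$ whose convergence is exactly what forces $p\geq 8$ --- you are right that the hypothesis ``$n\geq 8$'' is really ``$p\geq 8$'', which is what the paper's own proof of the regression version invokes, and $n=p$ in the sequence model. The Type~II argument also starts the same way ($1-\phi\leq 1-\phi_S$, then a deviation bound for $\|Y-\theta^*\|^2_{S\cup S^*}$ under $P_\theta$), but here the step you defer as ``fiddly constant bookkeeping'' is a genuine gap: your reduction cannot produce the advertised coefficient $\tfrac{2}{3}$ on $\epsilon^2$. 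After the triangle inequality you are bounding $\mathbb{P}\left(\chi^2_{|S\cup S^*|}\geq(\epsilon-\sqrt{6T})^2\right)$ with $T=(|S|+|S^*|)\log p$, and since $\mathbb{P}(\chi^2_d\geq u)\geq\mathbb{P}(Z^2\geq u)\gtrsim u^{-1/2}e^{-u/2}$, no bound of the form $\exp(-cu)$ with $c>\tfrac{1}{2}$ is available; the same ceiling of $\tfrac{1}{2}$ appears if you instead optimize the Chernoff bound for the lower tail of the noncentral chi-square, because $\lambda/(1+2\lambda)<\tfrac{1}{2}$ for every $\lambda>0$. After splitting the cross term $2\epsilon\sqrt{6T}$ you land at a coefficient strictly below $\tfrac{1}{2}$ (something like $\tfrac{2}{5}$ or $\tfrac{5}{12}$ with an enlarged additive allowance), not $\tfrac{2}{3}$.

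For what it is worth, the paper reaches $\tfrac{2}{3}$ by a different device: it claims the event $\{\|a+b\|^2\leq 6T\}$ (with $a=\theta-\theta^*$ restricted to $S\cup S^*$ and $b$ the projected noise) is contained in $\{\|a\|^2-\tfrac{1}{2}\|b\|^2\leq 6T\}$, i.e.\ it uses the pointwise inequality $\|a+b\|^2\geq\|a\|^2-\tfrac{1}{2}\|b\|^2$, which fails in general (take $b=-a$); the valid version $\|a+b\|^2\geq\tfrac{1}{2}\|a\|^2-\|b\|^2$ only yields a coefficient of $\tfrac{1}{6}$. So the discrepancy is not a defect of your route --- your route is the sound one --- but you should prove the lemma with a smaller leading constant rather than chase $\tfrac{2}{3}$. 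This is harmless downstream: Theorem \ref{thm:EB-con} only requires $C>2C_5=\tfrac{1}{3}$ given $C_5=\tfrac{1}{6}$ from Lemma \ref{lem:spsl-seq-pm}, and any coefficient you can actually extract in $(\tfrac{1}{3},\tfrac{1}{2})$ suffices, so you should restate the Type~II bound accordingly and carry the modified constant through.
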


Next, we check the prior mass ratio condition (\ref{eq:EB-pm}). Recall that for any subset $S\subset[p]$, $\Gamma_S$ is a product measure on $\Theta_S$. Its definition is given by (\ref{eq:Gamma-spike-slab}) with $G$ being the Laplace distribution with density function $g(x)=\frac{\tau}{2}e^{-\tau|x|}$.
\begin{lemma}\label{lem:spsl-seq-pm}
For any $\tau>0$, there exists some constant $C_2>0$ only depending on $\tau$, such that
$$\frac{\Gamma_S\left(\left\{\theta\in\Theta_S:\|\theta-\theta^*\|^2\leq \epsilon^2\right\}\right)}{\Gamma_{S^*}\left(\left\{\theta\in\Theta_{S^*}:\|\theta-\theta^*\|^2\leq |S^*|\log p\right\}\right)} \leq \exp\left(\frac{1}{6}\epsilon^2 + C_2\left(|S|\log p+|S^*|\log p\right)\right),$$
for any $\epsilon^2\geq |S^*|\log p$ and any $S\subset[p]$.
\end{lemma}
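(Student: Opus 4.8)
The plan is to bound the numerator from above and the denominator from below by factorizing each integral over coordinates, and then to arrange that the $\theta^*$-dependent factors cancel against each other. Throughout write $V_d(r)=\pi^{d/2}r^d/\Gamma(d/2+1)$ for the volume of the Euclidean ball of radius $r$ in $\R^d$ (here $\Gamma(\cdot)$ is the Gamma function, not a prior measure). For the numerator, only the coordinates $j\in S$ are free on $\Theta_S$, and $\Gamma_S$ assigns them the product Laplace density $\prod_{j\in S}g(\theta_j)$. Using the reverse triangle inequality $|\theta_j|\ge|\theta^*_j|-|\theta_j-\theta^*_j|$ gives $g(\theta_j)\le\frac{\tau}{2}e^{-\tau|\theta^*_j|}e^{\tau|\theta_j-\theta^*_j|}$, while on the integration region $\sum_{j\in S}(\theta_j-\theta^*_j)^2\le\epsilon^2$, so $\sum_{j\in S}|\theta_j-\theta^*_j|\le\sqrt{|S|}\,\epsilon$ by Cauchy--Schwarz and the region for $(\theta_j)_{j\in S}$ sits inside a ball of radius $\le\epsilon$. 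Using $\theta^*_j=0$ for $j\notin S^*$, I would obtain
\[
\Gamma_S\!\left(\left\{\theta\in\Theta_S:\|\theta-\theta^*\|^2\le\epsilon^2\right\}\right)\le \left(\tfrac{\tau}{2}\right)^{|S|}e^{-\tau\sum_{j\in S\cap S^*}|\theta^*_j|}\,e^{\tau\sqrt{|S|}\,\epsilon}\,V_{|S|}(\epsilon).
\]
For the denominator, restricting the integral to the product of intervals $\{|\theta_j-\theta^*_j|\le\sqrt{\log p}\}$ over $j\in S^*$ keeps $\sum_{j\in S^*}(\theta_j-\theta^*_j)^2\le|S^*|\log p$, and on each such interval $g(\theta_j)\ge\frac{\tau}{2}e^{-\tau|\theta^*_j|}e^{-\tau\sqrt{\log p}}$, which yields
\[
\Gamma_{S^*}\!\left(\left\{\theta\in\Theta_{S^*}:\|\theta-\theta^*\|^2\le|S^*|\log p\right\}\right)\ge (\tau\sqrt{\log p})^{|S^*|}\,e^{-\tau\sum_{j\in S^*}|\theta^*_j|}\,e^{-\tau|S^*|\sqrt{\log p}}.
\]

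Taking the ratio, the two $\theta^*$-dependent exponentials combine into the single factor $e^{\tau\sum_{j\in S^*\setminus S}|\theta^*_j|}$. This is the crux of the argument: whenever the numerator is nonzero, every $\theta\in\Theta_S$ in the region has $\theta_j=0$ for $j\in S^*\setminus S$, so $\|\theta-\theta^*\|^2\ge\sum_{j\in S^*\setminus S}(\theta^*_j)^2$ forces $\sum_{j\in S^*\setminus S}(\theta^*_j)^2\le\epsilon^2$; Cauchy--Schwarz then gives $\sum_{j\in S^*\setminus S}|\theta^*_j|\le\sqrt{|S^*|}\,\epsilon$, so this factor is at most $e^{\tau\sqrt{|S^*|}\,\epsilon}$. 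This is exactly what makes the bound hold uniformly over $\theta^*$ with a constant depending only on $\tau$, and it is the step I expect to require the most care; everything else is a deterministic volume estimate.

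It then remains to absorb the deterministic factors into $\exp\!\left(\tfrac16\epsilon^2+C_2(|S|+|S^*|)\log p\right)$. The three $\epsilon^2$-type contributions are controlled and split so as to sum to $\tfrac16\epsilon^2$: by Young's inequality $\tau\sqrt{|S|}\,\epsilon\le\tfrac{1}{18}\epsilon^2+C\tau^2|S|$ and likewise $\tau\sqrt{|S^*|}\,\epsilon\le\tfrac{1}{18}\epsilon^2+C\tau^2|S^*|$, while the power $\epsilon^{|S|}$ inside $V_{|S|}(\epsilon)$ is handled by the elementary bound $\log(\epsilon^2)\le\tfrac{\epsilon^2}{9|S|}+\log(9|S|)$, giving $\tfrac{|S|}{2}\log(\epsilon^2)\le\tfrac{1}{18}\epsilon^2+C|S|\log p$ after using $\log(9|S|)\lesssim\log p$. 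The remaining constant prefactors $(\tau/2)^{|S|}$, $\pi^{|S|/2}$, $1/\Gamma(|S|/2+1)$ and $(\tau\sqrt{\log p})^{-|S^*|}$ are each of the form $e^{\pm C|S|\log p}$ or $e^{\pm C|S^*|\log p}$ (the Gamma factor only helping, being bounded below by a universal positive constant), and fold into the $C_2(|S|+|S^*|)\log p$ budget. Collecting these estimates, since $\tfrac{1}{18}+\tfrac{1}{18}+\tfrac{1}{18}=\tfrac16$, yields the claimed inequality; the boundary case $S^*=\emptyset$ is immediate because the denominator then equals $1$.
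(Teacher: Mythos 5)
Your proof is correct and follows essentially the same strategy as the paper's: bound the numerator by the supremum of the Laplace density times a ball volume, bound the denominator from below by an infimum times a volume, and absorb the resulting $\tau\sqrt{|S|}\,\epsilon$-type and $|S|\log\epsilon$-type terms into $\frac16\epsilon^2+C_2(|S|+|S^*|)\log p$ via Young's inequality and $\log x\le x$. The only substantive difference is bookkeeping: the paper controls the $\theta^*$-dependence in one stroke via $\|\bar\theta\|_1-\|\theta\|_1\le\|\theta-\bar\theta\|_1\le\sqrt{|S|+|S^*|}\bigl(\epsilon+\sqrt{|S^*|\log p}\bigr)$ for $\theta,\bar\theta$ in the two regions, whereas you cancel the shared factor over $S\cap S^*$ coordinate-wise and handle the leftover $e^{\tau\|\theta^*_{S^*\setminus S}\|_1}$ through the (correct) observation that a nonempty numerator region forces $\|\theta^*_{S^*\setminus S}\|\le\epsilon$ — both mechanisms work, and yours also silently absorbs the $e^{\tau|S^*|\sqrt{\log p}}$ factor from your hypercube lower bound, which indeed folds into the same $C_2|S^*|\log p$ budget.
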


Lemma \ref{lem:spsl-sum}, Lemma \ref{lem:spsl-seq-test} and Lemma \ref{lem:spsl-seq-pm} together imply that the three conditions of Theorem \ref{thm:EB-con} are satisfied with $C=\frac{2}{3}$, $C_1=1$, $C_3=1$, $C_5=\frac{1}{6}$ and some $C_2,C_4$ depending on $\tau$. Thus, all conditions in Theorem \ref{thm:EB-con} are satisfied. 

\begin{thm}\label{thm:EB-sequence}
Consider the empirical Bayes procedure defined with a Laplace slab for some constant $\tau>0$ and weight function (\ref{eq:spike-and-slab-weight}) that satisfies $p^{C_1'}\geq\beta/\alpha\geq p^{C_2'}$ for some sufficiently large constants $C_1'>C_2'>0$. Then, the conditions (\ref{eq:EB-test})-(\ref{eq:EB-pm}) are satisfied under the setting of Gaussian sequence model $P_{\theta}=N(\theta,I_p)$. As a consequence, we have
$$P_{\theta^*}\Pi_{\wh{\lambda}}\left(\|\theta-\theta^*\|^2 > M|S^*|\log p\Big|Y\right)\leq 4\exp(-C'|S^*|\log p),$$
with some constants $M,C'>0$ uniformly over all $\theta^*\in\Theta_{S^*}$.
\end{thm}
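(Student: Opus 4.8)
The plan is to obtain Theorem~\ref{thm:EB-sequence} as a direct instantiation of the general Theorem~\ref{thm:EB-con}, using the prior decomposition already recorded in (\ref{eq:Gamma-spike-slab}). First I would fix the problem-specific quantities: take the loss $L(\theta,\theta^*)=\|\theta-\theta^*\|^2$, set $\rho=2$ so that $D_\rho(P_{\theta^*}\|P_\theta)=\|\theta-\theta^*\|^2$ coincides with the loss (a special feature of the Gaussian location family), and choose the rate function $\epsilon(S)^2=|S|\log p$ together with the trivial map $\delta(S)=1$. The index $Z^*$ is taken to be the support $S^*$ with $\theta^*\in\Theta_{S^*}$, so that $\epsilon(Z^*)^2=|S^*|\log p$; for $|S^*|\geq 1$ and $p\geq 3$ this meets the requirement $\epsilon(Z^*)\geq 1$, and the degenerate case $\theta^*=0$ is accommodated by reading $|S^*|$ as $|S^*|\vee 1$ throughout, which does not change the stated rate. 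The hyperparameter $\lambda^*$ is the one produced by Lemma~\ref{lem:spsl-sum}.

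Next I would verify the three hypotheses of Theorem~\ref{thm:EB-con} by quoting the three lemmas. Condition (\ref{eq:EB-sieve}) is exactly Lemma~\ref{lem:spsl-sum}, which supplies both $\lambda^*$ and the constant $C_4$; its hypotheses $\frac{\alpha}{e(p+\beta-1)}\geq p^{-C_1'}$ and $\frac{e(\alpha+p)}{\beta-1}\leq p^{-C_2'}$ are precisely what the weight assumption $p^{C_1'}\geq\beta/\alpha\geq p^{C_2'}$ guarantees, after absorbing constant factors into the exponents $C_1',C_2'$. The testing condition (\ref{eq:EB-test}) follows from Lemma~\ref{lem:spsl-seq-test}, which fixes $C_1=1$, $C=\frac{2}{3}$, and a testing constant $C_2=5$. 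The prior-mass-ratio condition (\ref{eq:EB-pm}) follows from Lemma~\ref{lem:spsl-seq-pm}, which gives $C_3=1$, $C_5=\frac{1}{6}$, and its own constant $C_2$ depending on $\tau$.

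The one point that needs care---and the only \emph{genuine} obstacle---is reconciling the constant $C_2$, since it enters all three conditions with different roles. Both (\ref{eq:EB-test}) and (\ref{eq:EB-pm}) are upper bounds whose right-hand sides only grow when $C_2$ is increased, so they remain valid after replacing $C_2$ by any larger value; I would therefore set $C_2$ to the maximum of the testing constant $5$ and the prior-mass constant from Lemma~\ref{lem:spsl-seq-pm}. Crucially, only \emph{after} this common $C_2$ is fixed would I invoke Lemma~\ref{lem:spsl-sum}, because the factor $\exp(2C_2\epsilon(Z)^2)$ on its left-hand side makes the sieve bound depend on $C_2$; the lemma then returns a matching $C_4$ (with $C_1',C_2'$ taken large enough). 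With all constants in hand I would check the structural constraint $C>2C_5$ of Theorem~\ref{thm:EB-con}, which holds since $\frac{2}{3}>2\cdot\frac{1}{6}=\frac{1}{3}$.

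Finally, all hypotheses of Theorem~\ref{thm:EB-con} being met, its conclusion gives
$$P_{\theta^*}\Pi_{\wh{\lambda}}\left(\|\theta-\theta^*\|^2>M\epsilon(Z^*)^2\Big|Y\right)\leq 4\exp\left(-C'\epsilon(Z^*)^2\right)$$
for some $M,C'>0$; substituting $\epsilon(Z^*)^2=|S^*|\log p$ yields exactly the claimed bound. The uniformity over $\theta^*\in\Theta_{S^*}$ is automatic, because none of the verified constants depends on the particular value of $\theta^*$ within $\Theta_{S^*}$.
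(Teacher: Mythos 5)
Your proposal is correct and follows essentially the same route as the paper: instantiate Theorem~\ref{thm:EB-con} with $L(\theta,\theta^*)=\|\theta-\theta^*\|^2$, $\rho=2$, $\epsilon(S)^2=|S|\log p$, $\delta(S)=1$, and verify the three conditions via Lemmas~\ref{lem:spsl-sum}, \ref{lem:spsl-seq-test} and \ref{lem:spsl-seq-pm}, checking $C=\tfrac23>2C_5=\tfrac13$. Your explicit handling of the shared constant $C_2$ (taking the maximum over the testing and prior-mass constants before invoking Lemma~\ref{lem:spsl-sum}) is a point the paper leaves implicit, but it is the same argument.
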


Previous results on empirical Bayes procedures using spike-and-slab priors under the setting of Gaussian sequence model are given by \cite{johnstone2004needles,castillo2018empirical}. The seminal work \cite{johnstone2004needles} shows that the posterior median achieves the minimax rate as long as the slab has a tail that is at least as heavy as the Laplace distribution. Interestingly, \cite{castillo2018empirical} shows that the Laplace slab does not lead to optimal convergence rate for the entire empirical Bayes posterior distribution, and one has to use a Cauchy slab for this purpose. We emphasize that the lower bound of the Laplace slab in \cite{castillo2018empirical} is proved with the weight function $w(\lambda)=\indc{\lambda\in[t,1]}$ for some $t=o(1)$. Theorem \ref{thm:EB-sequence} thus complements the result of \cite{castillo2018empirical} by showing that the Laplace slab can still lead to optimal rate for the entire empirical Bayes posterior distribution, as long as a different weight function (\ref{eq:spike-and-slab-weight}) is used with parameters $\alpha$ and $\beta$ set appropriately.

\paragraph{Sparse linear regression.} Consider a regression model $Y\sim P_{\theta^*}=N(X\theta^*,I_n)$ with some design matrix $X\in\mathbb{R}^{n\times p}$. Again, we assume a sparse $\theta^*$ and there exists some $S^*\subset[p]$ such that $\theta^*\in\Theta_{S^*}$. The empirical Bayes posterior $\wh{\Pi}_{\wh{\lambda}}(\cdot|Y)$ is defined in the same way as the Gaussian sequence model with the likelihood replaced by $P_{\theta}=N(X\theta,I_n)$.

We will check the conditions (\ref{eq:EB-test}) and (\ref{eq:EB-pm}) with the loss function $L(\theta,\theta^*)=\|X(\theta-\theta^*)\|^2$, and for $\rho=2$, we have $D_{\rho}(P_{\theta^*}\|P_{\theta})=\|X(\theta-\theta^*)\|^2$. Recall that for any subset $S\subset[p]$, we have $\epsilon(S)=|S|\log p$ and $\delta(S)=1$.

In order to introduce the testing procedure, we first introduce some additional notation. For any subset $S\subset[p]$, we use $P_{S}\in\mathbb{R}^{n\times n}$ for the projection matrix onto the subspace spanned by the columns of $X$ in the set $S$. To be more precise, we have $P_S=X_S(X_S^TX_S)^-X_S^T$, where $X_S\in\mathbb{R}^{n\times |S|}$ is a submatrix of $X$ that collects the columns of $X$ in the set $S$, and $(X_S^TX_S)^-$ is the generalized inverse of $X_S^TX_S$. For any $S\subset[p]$, define
$$\phi_S=\indc{\|P_{S\cup S^*}(Y-X\theta^*)\|^2> 6(|S|\log p+|S^*|\log p)},$$
and the overall test is given by
\begin{equation}
\phi=\max_{S\subset[p]}\phi_S. \label{eq:test-regression}
\end{equation}
The analysis of the testing error is the same as that of (\ref{eq:test-sequence}), and we get the same conclusion.
\begin{lemma}\label{lem:spsl-reg-test}
Assume $p\geq 8$. Under the setting of sparse linear regression, the testing procedure (\ref{eq:test-regression}) satisfies
\begin{eqnarray*}
P_{\theta^*}\phi &\leq& \exp(-|S^*|\log p), \\
\sup_{\theta\in\Theta_S:\|\theta-\theta^*\|^2\geq\epsilon^2}P_{\theta}(1-\phi) &\leq& \exp\left(-\frac{2}{3}\epsilon^2 + 5(|S|\log p + |S^*|\log p)\right),
\end{eqnarray*}
for any $\epsilon^2\geq |S^*|\log p$ and any $S\subset[p]$.
\end{lemma}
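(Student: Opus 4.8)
The plan is to reproduce the two-step testing analysis already carried out for the Gaussian sequence model in Lemma~\ref{lem:spsl-seq-test}, replacing the coordinate restriction $\|\cdot\|_{S\cup S^*}$ there by the projection $\|P_{S\cup S^*}\cdot\|$ here. Two elementary structural facts make this substitution transparent and are the only things that genuinely differ from the sequence-model argument. First, $P_{S\cup S^*}$ is an orthogonal projection of rank at most $|S\cup S^*|\le|S|+|S^*|$, so under the null the statistic $\|P_{S\cup S^*}(Y-X\theta^*)\|^2$ is a central chi-square with at most $|S|+|S^*|$ degrees of freedom. Second, for $\theta\in\Theta_S$ and $\theta^*\in\Theta_{S^*}$ the difference $\theta-\theta^*$ is supported on $S\cup S^*$, so $X(\theta-\theta^*)$ lies in the column span of $X_{S\cup S^*}$, which is exactly the range of $P_{S\cup S^*}$; hence $P_{S\cup S^*}X(\theta-\theta^*)=X(\theta-\theta^*)$, i.e.\ the projection does not shrink the signal.

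For the type~I error I would write $Y-X\theta^*=\varepsilon$ with $\varepsilon\sim N(0,I_n)$ under $P_{\theta^*}$, so that $\|P_{S\cup S^*}(Y-X\theta^*)\|^2=\|P_{S\cup S^*}\varepsilon\|^2$ is a central chi-square with $\rank(P_{S\cup S^*})\le|S|+|S^*|$ degrees of freedom. Since $\phi=\max_S\phi_S$, a union bound gives
\begin{equation*}
P_{\theta^*}\phi\le\sum_{S\subseteq[p]}\mathbb{P}\left(\|P_{S\cup S^*}\varepsilon\|^2>6(|S|+|S^*|)\log p\right).
\end{equation*}
A Chernoff upper-tail bound for each central chi-square produces a factor decaying like $\exp\left(-c(|S|+|S^*|)\log p\right)$; grouping the $\binom{p}{s}\le e^{s\log p}$ subsets of size $s$ leaves a geometric series in $s$ whose ratio drops below $1$ precisely because $p\ge8$. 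Summing the series and keeping track of the $|S^*|$-dependent prefactor yields $P_{\theta^*}\phi\le\exp(-|S^*|\log p)$.

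For the type~II error, I use $\phi\ge\phi_S$ to reduce to $P_{\theta}(1-\phi)\le P_{\theta}(1-\phi_S)=\mathbb{P}\left(\|P_{S\cup S^*}(Y-X\theta^*)\|^2\le 6(|S|+|S^*|)\log p\right)$. Under $P_\theta$ we have $Y-X\theta^*=X(\theta-\theta^*)+\varepsilon$, and by the range identity $P_{S\cup S^*}(Y-X\theta^*)=X(\theta-\theta^*)+P_{S\cup S^*}\varepsilon$, whose squared norm is a noncentral chi-square with at most $|S|+|S^*|$ degrees of freedom and noncentrality $\|X(\theta-\theta^*)\|^2\ge\epsilon^2$. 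The bound is only nontrivial when $\epsilon^2$ exceeds a constant multiple of $(|S|+|S^*|)\log p$ (otherwise the right-hand side of~(\ref{eq:EB-test}) already exceeds $1$), and in that regime I would invoke the reverse triangle inequality $\|X(\theta-\theta^*)+P_{S\cup S^*}\varepsilon\|\ge\|X(\theta-\theta^*)\|-\|P_{S\cup S^*}\varepsilon\|$ to convert the lower-deviation event into the central chi-square upper-tail event $\|P_{S\cup S^*}\varepsilon\|^2\ge(\epsilon-\sqrt{6(|S|+|S^*|)\log p})^2$. Feeding this into the same chi-square deviation inequality used for type~I delivers a bound of the form $\exp\left(-c_1\epsilon^2+c_2(|S|+|S^*|)\log p\right)$ required by the theorem, with the stated constants emerging from tracking the deviation inequality.

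The chief obstacle is the type~I step: the union runs over exponentially many subsets $S\subseteq[p]$, so the test threshold $6(|S|+|S^*|)\log p$ must grow fast enough in $|S|$ that each chi-square tail beats the combinatorial factor $\binom{p}{s}\le p^{|S|}$. This is exactly what dictates the constant in the threshold and the mild requirement $p\ge8$ needed for the geometric series to sum. By contrast, the type~II step is essentially routine once the noncentrality is correctly identified as $\|X(\theta-\theta^*)\|^2$; its only subtlety is the range identity $P_{S\cup S^*}X(\theta-\theta^*)=X(\theta-\theta^*)$, which guarantees that the projection preserves the full signal and thus that the separation $\epsilon^2$ appears undiminished in the noncentrality.
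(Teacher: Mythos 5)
Your proposal follows the paper's proof essentially verbatim: the same union bound over subsets with a chi-square tail inequality (the paper uses $\mathbb{P}(\chi_d^2>t)\le\exp(\tfrac{2}{3}d-\tfrac{t}{3})$ from Laurent--Massart) beating the combinatorial factor $\binom{p}{s}\le p^s$ for the type~I error, and the same reduction to $\phi_S$ together with the range identity $P_{S\cup S^*}X(\theta-\theta^*)=X(\theta-\theta^*)$ followed by a triangle-inequality reduction to a central chi-square tail for the type~II error. The one caveat is constant tracking: your reverse-triangle step only yields $\|P_{S\cup S^*}\varepsilon\|^2\ge\bigl(\epsilon-\sqrt{6(|S|+|S^*|)\log p}\bigr)^2\ge\tfrac{1}{2}\epsilon^2-6(|S|+|S^*|)\log p$ and hence an exponent of roughly $-\tfrac{1}{6}\epsilon^2$ rather than the stated $-\tfrac{2}{3}\epsilon^2$, but this is immaterial for the downstream application of Theorem~\ref{thm:EB-con} (which only needs $C>2C_5$), and the paper's own derivation of the $\tfrac{2}{3}$ rests on an equally loose elementary inequality at the same step.
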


Before establishing the condition (\ref{eq:EB-pm}), we need to define a new quantity. For the design matrix $X\in\mathbb{R}^{n\times p}$, we write $\|X\|=\max_{j\in[p]}\|X_j\|$, where $X_j$ is the $j$th column of $X$. Given a $S\subset[p]$, the compatibility number of of $X$ is defined by
$$\kappa(S)=\inf\left\{\frac{\|Xu\||S|^{1/2}}{\|X\|\|u_S\|_1}: \|u_{S^c}\|_1\leq 3\|u_S\|_1, u_S\neq 0\right\},$$
where $u_S$ is a subvector of $u$ with coordinates in $S$.
The quantity $\kappa(S)$ is commonly used in the literature of Lasso \citep{van2007deterministic,bickel2009simultaneous,buhlmann2011statistics}, and has also been used in the context of Bayesian sparse linear regression \citep{castillo2015bayesian}.

\begin{lemma}\label{lem:spsl-reg-pm}
Set $\tau=\bar{\tau}\|X\|$ with $\bar{\tau}=p^{-\zeta}$ for some constant $\zeta>0$. As long as $\kappa(S^*)\geq p^{-\zeta}$, there exists some constant $C_2>0$ only depending on $\zeta$, such that
$$\frac{\Gamma_S\left(\left\{\theta\in\Theta_S: \|X(\theta-\theta^*)\|^2\leq\epsilon^2\right\}\right)}{\Gamma_{S^*}\left(\left\{\theta\in\Theta_{S^*}:\|X(\theta-\theta^*)\|^2\leq |S^*|\log p\right\}\right)}\leq \exp\left(\bar{\tau}\epsilon^2 + C_2(|S|\log p+ |S^*|\log p)\right),$$
for any $\epsilon^2\geq |S^*|\log p$ and any $S\subset[p]$.
\end{lemma}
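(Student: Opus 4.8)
The plan is to bound the numerator of the ratio from above and the denominator from below, using the compatibility number $\kappa(S^*)$ as the sole geometric input, and then to absorb every residual exponent into the two admissible budgets $\bar\tau\epsilon^2$ and $C_2(|S|+|S^*|)\log p$. Throughout I write $u=\theta-\theta^*$ and use $\rho=2$, so that $D_\rho(P_{\theta^*}\|P_\theta)=\|X(\theta-\theta^*)\|^2=L(\theta,\theta^*)$ and both the numerator and denominator events are governed by the single quantity $\|Xu\|$.

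For the denominator I would first exhibit a small $\ell_1$-ball inside the sublevel set. Since $\theta,\theta^*\in\Theta_{S^*}$ forces $u$ to be supported on $S^*$, the crude inequality $\|Xu\|\le\|X\|\,\|u\|_1$ shows that $\{\|u\|_1\le r\}$ with $r\asymp\sqrt{|S^*|\log p}/\|X\|$ is contained in $\{\|Xu\|^2\le|S^*|\log p\}$. On this ball the density $\prod_{j\in S^*}\tfrac\tau2 e^{-\tau|\theta_j|}$ is at least $(\tfrac\tau2)^{|S^*|}e^{-\tau(\|\theta^*\|_1+r)}$, while the $\ell_1$-ball has Lebesgue volume $(2r)^{|S^*|}/|S^*|!$. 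Multiplying yields a denominator lower bound of the form $e^{-\tau\|\theta^*\|_1}(\tau r)^{|S^*|}/|S^*|!$; since $\tau r\asymp\bar\tau\sqrt{|S^*|\log p}=p^{-\zeta}\sqrt{|S^*|\log p}$, the logarithm of this factor is of order $-|S^*|\log p$, which is admissible. Crucially the bound carries the factor $e^{-\tau\|\theta^*\|_1}$, which the numerator must match.

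For the numerator the elementary identity $\|\theta\|_1\ge\|\theta^*\|_1-\|u_{S^*}\|_1+\|u_{(S^*)^c}\|_1$, obtained by splitting the $\ell_1$ norm across $S^*$ and $(S^*)^c$, drives everything. I would then run a cone dichotomy on each admissible $\theta$: if $u$ lies in the compatibility cone $\|u_{(S^*)^c}\|_1\le 3\|u_{S^*}\|_1$, the definition of $\kappa(S^*)$ together with $\|Xu\|\le\epsilon$ bounds the deficit, $\|u_{S^*}\|_1\le |S^*|^{1/2}\epsilon/(\|X\|\kappa(S^*))$; otherwise the extra term $\|u_{(S^*)^c}\|_1$ already forces $\|\theta\|_1\ge\|\theta^*\|_1$. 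Either way, using $\tau=\bar\tau\|X\|$ and $\kappa(S^*)\ge p^{-\zeta}=\bar\tau$, this extracts the matching factor $e^{-\tau\|\theta^*\|_1}$; after taking the ratio these factors cancel, and what remains is to control a residual Laplace integral together with the deficit exponent.

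The hard part is exactly this last step. Once $|S|>n$ the matrix $X_S$ is rank-deficient, so $\{\theta\in\Theta_S:\|X(\theta-\theta^*)\|\le\epsilon\}$ is unbounded along the null directions of $X_S$, and the pointwise-density-times-Euclidean-volume argument that suffices for the Gaussian sequence model of Lemma~\ref{lem:spsl-seq-pm} diverges here. Moreover a purely pointwise use of the Case-A deficit bound overshoots: it produces an exponent of order $|S^*|^{1/2}\epsilon$, which is \emph{not} uniformly dominated by $\bar\tau\epsilon^2+C_2|S^*|\log p$ across the whole range $\epsilon^2\ge|S^*|\log p$ when $\kappa(S^*)$ is as small as $p^{-\zeta}$, since for intermediate $\epsilon$ the quadratic budget has not yet taken over. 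The resolution must exploit that large-deficit configurations occupy only a small measure: I would integrate over the fibers $\{X_S\theta_S=y\}$, using the coercivity of $\ell_1$ on each fiber (again quantified through a restricted-eigenvalue type constant) to keep the null-direction integral finite and of size $p^{O(|S|)}$, while reserving the slack $\exp\!\big(\bar\tau(\epsilon^2-\|Xu\|^2)\big)\ge\mathbf 1\{\|Xu\|\le\epsilon\}$ both to generate the $e^{\bar\tau\epsilon^2}$ term and to damp the large-$\|Xu\|$ region where the deficit bound is weakest. Balancing the full extraction of $e^{-\tau\|\theta^*\|_1}$ against the decay needed to integrate the unbounded directions is the delicate point, and it is where the choice $\bar\tau=p^{-\zeta}\le\kappa(S^*)$ is used most heavily.
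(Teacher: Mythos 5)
Your skeleton --- bounding numerator and denominator separately after multiplying both by $e^{\tau\|\theta^*\|_1}$, lower-bounding the denominator by a small $\ell_1$-type ball, and splitting the numerator along the compatibility cone $\|(\theta-\theta^*)_{S^{*c}}\|_1\le 3\|(\theta-\theta^*)_{S^*}\|_1$ --- is exactly the paper's. But the step you single out as the ``hard part'' is where your proposal stops being a proof, and the obstacle you place there is not the real one. The unboundedness of $\{\theta\in\Theta_S:\|X(\theta-\theta^*)\|\le\epsilon\}$ when $X_S$ is rank-deficient is harmless because the argument never computes the volume of that set. On the cone, compatibility confines the \emph{entire} event to the bounded $\ell_1$-ball $\{\|\theta-\theta^*\|_1\le r\}$ with $r=4|S^*|^{1/2}\epsilon/(\|X\|\kappa(S^*))$; writing $e^{\tau\|\theta-\theta^*\|_1}=e^{2\tau\|\theta-\theta^*\|_1}e^{-\tau\|\theta-\theta^*\|_1}\le e^{2\tau r}e^{-\tau\|\theta-\theta^*\|_1}$ and then dropping the constraint, the remaining shifted Laplace density integrates to exactly $1$ over all of $\Theta_S\cong\mathbb{R}^{|S|}$, so the cone contribution is at most $\exp\bigl(8\bar\tau\sqrt{|S^*|}\,\epsilon/\kappa(S^*)\bigr)$. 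Off the cone, the refined deficit bound $\|\theta^*\|_1-\|\theta\|_1\le-\tfrac12\|\theta-\theta^*\|_1$ (you only retain the weaker conclusion $\|\theta\|_1\ge\|\theta^*\|_1$, discarding the half you actually need) turns the integrand into $(\tau/2)^{|S|}e^{-(\tau/2)\|\theta-\theta^*\|_1}$, which integrates to $2^{|S|}$ over the whole space with no constraint at all. No fiber decomposition, no coercivity of $\ell_1$ along null directions, and no volume estimate are needed; worse, the fiber argument you sketch would require restricted-eigenvalue control of $X_S$ for \emph{arbitrary} $S$, which the hypotheses (a compatibility bound for $S^*$ only) do not provide.

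Your second worry --- that the exponent of order $\sqrt{|S^*|}\,\epsilon$ is not obviously dominated for intermediate $\epsilon$ --- correctly identifies the tight spot, but it is resolved by arithmetic, not by a new integration scheme: one absorbs $8\bar\tau\sqrt{|S^*|}\,\epsilon/\kappa(S^*)\le\bar\tau\epsilon^2+16\bar\tau|S^*|/\kappa(S^*)^2$ by AM--GM and then uses the assumed relation between $\bar\tau=p^{-\zeta}$ and the lower bound on $\kappa(S^*)$ to bury the second term in $C_2|S^*|\log p$ (this is precisely where that assumption earns its keep, and the exponents in the two conditions must be matched for it to go through). The damping device $\exp(\bar\tau(\epsilon^2-\|Xu\|^2))\ge\indc{\|Xu\|\le\epsilon}$ that you propose instead is never executed, so as written the proposal establishes the denominator bound and the geometry of the cone split but leaves the numerator bound --- the substance of the lemma --- unproved.
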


Lemma \ref{lem:spsl-sum}, Lemma \ref{lem:spsl-reg-test} and Lemma \ref{lem:spsl-reg-pm} together imply that the three conditions of Theorem \ref{thm:EB-con} are satisfied for some constants $C,C_1,C_2,C_3,C_4,C_5>0$. The requirement $C>2C_5$ is certainly satisfied, since $\bar{\tau}=p^{-\zeta}$ can be arbitrarily small for a sufficiently large $p$. We can then immediately write down a theorem for the convergence rate of the empirical Bayes posterior by applying Theorem \ref{thm:EB-con} to the current setting.

\begin{thm}\label{eq:EB-reg}
Consider the empirical Bayes procedure defined with a Laplace slab for $\tau=p^{-\zeta}\|X\|$ with some constant $\zeta>0$ and weight function (\ref{eq:spike-and-slab-weight}) that satisfies $p^{C_1'}\geq\beta/\alpha\geq p^{C_2'}$ for some sufficiently large constant $C_1'>C_2'>0$. As long as $\kappa(S^*)\geq p^{-\zeta}$, the conditions (\ref{eq:EB-test})-(\ref{eq:EB-pm}) are satisfied under the setting of sparse linear regression $P_{\theta}=N(X\theta,I_n)$. As a consequence, we have
$$P_{\theta^*}\Pi_{\wh{\lambda}}\left(\|X(\theta-\theta^*)\|^2 > M|S^*|\log p\Big|Y\right)\leq 4\exp(-C'|S^*|\log p),$$
with some constants $M,C'>0$ uniformly over all $\theta^*\in\Theta_{S^*}$.
\end{thm}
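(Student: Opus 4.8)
The plan is to recognize that Theorem~\ref{eq:EB-reg} is a direct instance of the general Theorem~\ref{thm:EB-con}, specialized to the spike-and-slab prior decomposition (\ref{eq:prior-decomp}) with the subspaces $\Theta_S$, the component measures $\Gamma_S$ from (\ref{eq:Gamma-spike-slab}), and the weights $\nu_\lambda(S)$. Since the three abstract hypotheses (\ref{eq:EB-test})--(\ref{eq:EB-pm}) of Theorem~\ref{thm:EB-con} have already been cast into the three lemmas \ref{lem:spsl-sum}, \ref{lem:spsl-reg-test} and \ref{lem:spsl-reg-pm} for exactly this setting, the entire argument reduces to (i) fixing the problem-specific quantities so that the lemmas line up with the conditions, (ii) tracking the constants, and (iii) invoking Theorem~\ref{thm:EB-con}.

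First I would set the loss $L(\theta,\theta^*)=\|X(\theta-\theta^*)\|^2$, the rate function $\epsilon(S)^2=|S|\log p$, and the trivial map $\delta(S)=1$, and I would take $Z^*=S^*$ the support of $\theta^*$. Choosing $\rho=2$ is the key reparametrization: it forces $D_\rho(P_{\theta^*}\|P_\theta)=\|X(\theta-\theta^*)\|^2=L(\theta,\theta^*)$, so the R\'enyi neighborhood in the denominator of (\ref{eq:EB-pm}) and the loss ball in its numerator live in the same design-induced geometry, which is what allows the lemmas to interlock. With these identifications, the structural condition (\ref{eq:EB-sieve}) is exactly Lemma~\ref{lem:spsl-sum} (which concerns only the effective weights $\gamma(S)$ and is independent of the design $X$, supplying the $\lambda^*$ and the constant $C_4$); the adaptive testing condition (\ref{eq:EB-test}) is Lemma~\ref{lem:spsl-reg-test}, giving $C=\tfrac{2}{3}$, $C_1=1$ and $C_2=5$; and the prior mass ratio condition (\ref{eq:EB-pm}) is Lemma~\ref{lem:spsl-reg-pm}, giving $C_3=1$, $C_5=\bar\tau=p^{-\zeta}$, and some $C_2$ depending on $\zeta$ (one then keeps the larger of the two values of $C_2$).

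The one genuinely delicate point in the assembly is verifying the hypothesis $C>2C_5$ of Theorem~\ref{thm:EB-con}. This is precisely where the design-adaptive slab scaling $\tau=p^{-\zeta}\|X\|$ does its work: it makes the Laplace slab contribute only $C_5=\bar\tau=p^{-\zeta}$ to the exponent in Lemma~\ref{lem:spsl-reg-pm}, so that $\tfrac{2}{3}=C>2C_5=2p^{-\zeta}$ holds for all sufficiently large $p$. This is the structural reason the Laplace slab can achieve the optimal rate under the weight (\ref{eq:spike-and-slab-weight}), in contrast to the negative result of \cite{castillo2018empirical} obtained for a different weight function. The compatibility assumption $\kappa(S^*)\ge p^{-\zeta}$ enters only through Lemma~\ref{lem:spsl-reg-pm}, where it keeps the denominator $\Gamma_{S^*}(\{\|X(\theta-\theta^*)\|^2\le|S^*|\log p\})$ from being too small by translating the design-norm ball into the $\ell_1$ geometry that the Laplace product measure can charge.

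Having checked all three conditions and the constant constraint, I would then apply Theorem~\ref{thm:EB-con} with $Z$ ranging over the subsets $S\subset[p]$ and $Z^*=S^*$ to obtain directly $P_{\theta^*}\Pi_{\wh\lambda}(\|X(\theta-\theta^*)\|^2>M|S^*|\log p\mid Y)\le 4\exp(-C'|S^*|\log p)$, uniformly over $\theta^*\in\Theta_{S^*}$. I expect the main obstacle, were the three lemmas not already established, to be Lemma~\ref{lem:spsl-reg-pm}: the prior mass ratio for the Laplace slab requires relating $\|X(\theta-\theta^*)\|$ back to $\|\theta-\theta^*\|_1$ through the compatibility number $\kappa(S^*)$, and this is where all the design-dependent subtlety of the regression case (absent in the Gaussian sequence model of Theorem~\ref{thm:EB-sequence}) is concentrated.
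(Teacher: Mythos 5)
Your proposal is correct and follows essentially the same route as the paper: the paper also proves this theorem by taking $L(\theta,\theta^*)=\|X(\theta-\theta^*)\|^2$, $\epsilon(S)^2=|S|\log p$, $\delta(S)=1$, $\rho=2$ (so that $D_\rho(P_{\theta^*}\|P_\theta)=\|X(\theta-\theta^*)\|^2$), matching conditions (\ref{eq:EB-test})--(\ref{eq:EB-pm}) to Lemmas \ref{lem:spsl-sum}, \ref{lem:spsl-reg-test} and \ref{lem:spsl-reg-pm}, and noting that $C>2C_5$ holds because $C_5=\bar\tau=p^{-\zeta}$ is small. Your identification of where the compatibility condition $\kappa(S^*)\geq p^{-\zeta}$ and the scaling $\tau=p^{-\zeta}\|X\|$ enter is exactly the paper's reasoning.
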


Frequentist convergence rates of Bayesian sparse linear regression with spike-and-slab prior have be studied by \cite{castillo2015bayesian}. The convergence rate of a variational approximation has recently been investigated by \cite{ray2019variational}. Theorem \ref{eq:EB-reg} provides the first result on the convergence rate of the empirical Bayes posterior for this problem. Together with \cite{castillo2015bayesian,ray2019variational}, we conclude that the optimal convergence rate for sparse linear regression can be achieved by any of the hierarchical Bayes, variational Bayes, and empirical Bayes methods.

On a technical side, our result requires the condition $\kappa(S^*)\geq p^{-\zeta}$ to achieve the rate $|S^*|\log p$. Given that the constant $\zeta$ can be arbitrary, this condition is very weak. In comparison, to achieve the same rate, Lasso requires $\kappa(S^*)\gtrsim 1$ \citep{van2007deterministic,bickel2009simultaneous,buhlmann2011statistics}. The results of \cite{castillo2015bayesian} and \cite{ray2019variational} also require $\kappa(S^*)\gtrsim 1$ for both the hierarchical Bayes and the variational Bayes procedures. While the condition $\kappa(S^*)\gtrsim 1$ is necessary for Lasso \citep{zhang2014lower}, we believe the same assumption in \cite{castillo2015bayesian} and \cite{ray2019variational} can be replaced by $\kappa(S^*)\geq p^{-\zeta}$ with an improved analysis.

\subsection{Applications to Structured Linear Models}\label{sec:slm}

In this section, we give another application of Theorem \ref{thm:EB-con}. We show the entire framework of structured linear models in \cite{gao2015general} can be solved by empirical Bayes procedures. This makes Theorem \ref{thm:EB-con} applicable to every special case of the framework including biclustering, regression with group sparsity, dictionary learning, etc.

We first review the framework of structured linear models in \cite{gao2015general}. A random vector $Y\in\mathbb{R}^N$ follows a structured linear model if $Y\sim N(\X_Z(B),I_N)$, where the notation $\X_Z$ represents a linear operator. The mean vector of the Gaussian distribution $\X_Z(B)$ has two elements, a model parameter $B$ and a structure $Z$ that indexes the linear operator $\X_Z(\cdot)$. The structure $Z$ is an element of some discrete space $\Z_{\lambda}$, which is further indexed by $\lambda\in\Lambda$ for some finite set $\Lambda$. We introduce a function $\ell(\Z_{\lambda})$ to denote the dimension of the parameter $B$. In other words, we have $B\in\mathbb{R}^{\ell(\Z_{\lambda})}$, and $\ell(\Z_{\lambda})$ is referred to as the intrinsic dimension of the parameter $B$. Thus, $\X_Z$ is a linear operator from $\mathbb{R}^{\ell(\Z_{\lambda})}$ to $\mathbb{R}^N$, and it can be viewed as a matrix in $\mathbb{R}^{N\times \ell(\Z_{\lambda})}$.

The most straightforward example in this framework is the sparse linear regression model. In this example, the sparse regression vector can be decomposed as $\beta=(\beta_S^T,0_{S^c}^T)^T$ for some subset $S$. Then, we can write $X\beta=X_S\beta_S=\X_Z(B)$ by letting $B=\beta_S$, $Z=S$ and $\X_Z(\cdot)=X_S$, where $X_S$ is a submatrix of $X$ with columns in $S$.

A hierarchical Bayes procedure is proposed by \cite{gao2015general} for the structured linear model. The prior distribution $\theta\sim\Pi$ can be described by the following sampling process:
\begin{enumerate}
\item Sample $\lambda\sim \pi$ from $\Lambda$. The distribution $\pi$ is specified by the probability mass function $\pi(\lambda)\propto \frac{\Gamma(\ell(\Z_{\lambda}))}{\Gamma(\ell(\Z_{\lambda})/2)}\exp(-D\epsilon(\Z_{\lambda})^2)$, where $\epsilon(\Z_{\lambda})^2=\ell(\Z_{\lambda})+\log|\ell(\Z_{\lambda})|$;
\item Conditioning on $\lambda$, sample $Z$ uniformly from the set $\bar{\Z}_{\lambda}=\{Z\in\Z_{\lambda}: \det(\X_Z^T\X_Z)>0\}$;
\item Conditioning on $(\lambda,Z)$, sample $B\sim f_{\ell(\Z_{\lambda}),\X_Z,\tau}$ and set $\theta=\X_Z(B)$, where $$f_{\ell(\Z_{\lambda}),\X_Z,\tau}(B)\propto \exp(-\tau\|\X_Z(B)\|)$$ is an elliptical Laplace distribution on $\mathbb{R}^{\ell(\Z_{\lambda})}$.
\end{enumerate}
We give some remarks on the above prior distribution. First, the quantity $\epsilon(\Z_{\lambda})^2=\ell(\Z_{\lambda})+\log|\ell(\Z_{\lambda})|$ plays the role of the complexity of the model. Similar definitions have also appeared in the frequentist literature \citep{barron1999risk,birge2001gaussian,yang2000combining}. In many cases, $\epsilon(\Z_{\lambda})^2$ can be shown to be the minimax rate of the problem \citep{klopp2019structured}. Though the most nature prior on $\lambda$ would be the complexity prior \citep{castillo2012needles} that has the form $\pi(\lambda)\propto \exp(-D\epsilon(\Z_{\lambda})^2)$, it is important to include the extra factor of Gamma function ratio $\frac{\Gamma(\ell(\Z_{\lambda}))}{\Gamma(\ell(\Z_{\lambda})/2)}$. This is because the elliptical Laplace distribution used in Step 3 of the sampling process has density function
\begin{equation}
f_{\ell(\Z_{\lambda}),\X_Z,\tau}(B)=\frac{\sqrt{\det(\X_Z^T\X_Z)}}{2}\left(\frac{\tau}{\sqrt{\pi}}\right)^{\ell(\Z_{\lambda})}\frac{\Gamma(\ell(\Z_{\lambda})/2)}{\Gamma(\ell(\Z_{\lambda}))}\exp(-\tau\|\X_Z(B)\|).\label{eq:def-e-lap}
\end{equation}
It turns out the normalizing constant $\frac{\Gamma(\ell(\Z_{\lambda})/2)}{\Gamma(\ell(\Z_{\lambda}))}$ of the density has an effect on the model selection that cannot be neglected. The extra factor $\frac{\Gamma(\ell(\Z_{\lambda}))}{\Gamma(\ell(\Z_{\lambda})/2)}$ in $\pi(\lambda)$ thus corrects this unwanted effect. Second, the sampling of $Z$ is only from the subset $\bar{\Z}_{\lambda}$ of non-degenerate structures. There is no need to sample $Z$ with $\det(\X_Z^T\X_Z)=0$, since any $\X_Z(B)$ can be equivalently written as $\X_{\bar{Z}}(\bar{B})$ for some $\bar{Z}$ such that $\det(\X_{\bar{Z}}^T\X_{\bar{Z}})>0$. Moreover, the requirement $\det(\X_Z^T\X_Z)>0$ ensures that $f_{\ell(\Z_{\lambda}),\X_Z,\tau}$ is a proper non-degenerate density function on $\mathbb{R}^{\ell(\Z_{\lambda})}$. Last but not least, the choice of the elliptical Laplace distribution helps to deal with model parameters that are unbounded due to its exponential tail. A heavier-tailed elliptical distribution would also work here.

With the Gaussian likelihood $P_{\theta}=N(\theta,I_N)$ and the prior $\theta\sim\Pi$ that is specified by the above sampling process, one can define the posterior distribution by $d\Pi(\theta|Y)\propto p(X|\theta)d\Pi(\theta)$.  Assume
\begin{equation}
|\{\lambda\in\Lambda: t-1<\epsilon(\Z_{\lambda})^2\leq t\}| \leq t\text{ for all }t\in\mathbb{N}. \label{eq:G-vdV-Z}
\end{equation}
It has been shown by \cite{gao2015general} that for any $Z^*\in\bar{\Z}_{\lambda^*}$, any $B^*\in\mathbb{R}^{\ell(\Z_{\lambda^*})}$ and any $\lambda^*\in\Lambda$, the posterior distribution satisfies
\begin{equation}
P_{\X_{Z^*}(B^*)}\Pi\left(\|\theta-\X_{Z^*}(B^*)\|^2>M\epsilon(\Z_{\lambda^*})^2\Big|Y\right) \leq \exp(-C'\epsilon(\Z_{\lambda^*})^2), \label{eq:slm-hb-rate}
\end{equation}
for some constants $M,C'>0$. The result (\ref{eq:slm-hb-rate}) is then applied to various high-dimensional estimation problems to achieve optimal posterior contraction.

In this section, we show an analogous result with $\lambda$ selected via the empirical Bayes principle can be derived from Theorem \ref{thm:EB-con}. Given a $\lambda\in\Lambda$, we use the notation $\Pi_{\lambda}$ for the distribution that consists of Step 2 and Step 3 in the sampling process of $\Pi$. In other words, to sample $\theta\sim\Pi_{\lambda}$, we first sample $Z\sim\text{Uniform}(\bar{\Z}_{\lambda})$, and then set $\theta=\X_Z(B)$ with $B\sim f_{\ell(\Z_{\lambda}),\X_Z,\tau}$. This leads to the posterior distribution $d\Pi_{\lambda}(\theta|Y)\propto p(X|\theta)d\Pi_{\lambda}(\theta)$ for every $\lambda\in\Lambda$. The empirical Bayes posterior is $\Pi_{\wh{\lambda}}(\cdot|Y)$, where $\wh{\lambda}$ is selected according to
\begin{equation}
\wh{\lambda}=\argmax_{\lambda\in\Lambda}\log\left[w(\lambda)\int p(Y|\theta)d\Pi_{\lambda}(\theta)\right]. \label{eq:fancy-weight}
\end{equation}
To achieve the same theoretical performance as the hierarchical Bayes, the weight function is given by
$$w(\lambda)=\frac{\Gamma(\ell(\Z_{\lambda}))}{\Gamma(\ell(\Z_{\lambda})/2)}\exp(-D\epsilon(\Z_{\lambda})^2),$$
which plays the same role as $\pi$ in the hierarchical Bayes procedure.

In the framework of structured linear models, the distribution $\Pi_{\lambda}$ admits a very natural prior decomposition required by Theorem \ref{thm:EB-con}. Without loss of generality, we assume that the sets $\{\bar{\Z}_{\lambda}\}_{\lambda\in\Lambda}$ are mutually disjoint. According to the two-step sampling process of $\Pi_{\lambda}$, we can then write
$$\Pi_{\lambda}=\sum_{Z\in\mathcal{Z}}\nu_{\lambda}(Z)\Gamma_Z,$$
where $\Z=\cup_{\lambda\in\Lambda}\bar{\Z}_{\lambda}$ and $\nu_{\lambda}(Z)=\frac{1}{|\bar{\Z}_{\lambda}|}\indc{Z\in\bar{\Z}_{\lambda}}$. Given the form of $\nu_{\lambda}(Z)$, we can equivalently write
$$\Pi_{\lambda}=\frac{1}{|\bar{\Z}_{\lambda}|}\sum_{Z\in\bar{\Z}_{\lambda}}\Gamma_Z.$$
For any $Z\in\bar{\Z_{\lambda}}$, $\Gamma_Z$ is the distribution of $\theta=\X_Z(B)$ with $B\sim f_{\ell(\Z_{\lambda}),\X_Z,\tau}$. Clearly, $\Gamma_Z$ is supported on the subspace $\Theta_Z$, defined by
$$\Theta_Z=\left\{\theta\in\mathbb{R}^N: \theta=\X_Z(B)\text{ for some }B\in\mathbb{R}^{\ell(\Z_{\lambda})}\right\}.$$
The effective weight (\ref{eq:effective-weight}) can be easily calculated. It is given by
$$\gamma(Z)=\max_{\lambda'\in\Lambda}\left[w(\lambda')\frac{1}{|\bar{\Z}_{\lambda'}|}\indc{Z\in\bar{\Z}_{\lambda'}}\right]=w(\lambda)\frac{1}{|\bar{\Z}_{\lambda}|}=\frac{\Gamma(\ell(\Z_{\lambda}))}{\Gamma(\ell(\Z_{\lambda})/2)}\exp(-D\epsilon(\Z_{\lambda})^2)\frac{1}{|\bar{\Z}_{\lambda}|},$$
for any $Z\in\bar{\Z_{\lambda}}$.

To check the three conditions (\ref{eq:EB-test})-(\ref{eq:EB-pm}), we need to specify $\epsilon(Z)$ and $\delta(Z)$. For any $Z\in\bar{\mathcal{Z}}_{\lambda}$, we take $\epsilon(Z)=\epsilon(\Z_{\lambda})$ and $\delta(Z)=\frac{\Gamma(\ell(\Z_{\lambda})/2)}{\Gamma(\ell(\Z_{\lambda}))}$. We first check (\ref{eq:EB-pm}). By the specific values of $w(\lambda)$, $\nu_{\lambda}(Z)$, $\gamma(Z)$, $\delta(Z)$ and $\epsilon(Z)$, we have for any $\lambda^*\in\Lambda$ and any $Z^*\in\bar{\Z}_{\lambda^*}$,
\begin{eqnarray}
\nonumber && \sum_{Z\in\mathcal{Z}}\frac{\gamma(Z)\delta(Z)}{w(\lambda^*)\nu_{\lambda^*}(Z^*)\delta(Z^*)}\exp\left(2C_2\epsilon(Z)^2\right) \\
\nonumber &=& \sum_{\lambda\in\Lambda}\sum_{Z\in\bar{\Z}_{\lambda}}\frac{w(\lambda)|\bar{\Z}_{\lambda^*}|\delta(Z)}{w(\lambda^*)|\bar{\Z}_{\lambda}|\delta(Z^*)}\exp\left(2C_2\epsilon(\Z_{\lambda})^2\right) \\
\label{eq:cancel-Gamma} &=& \sum_{\lambda\in\Lambda}\sum_{Z\in\bar{\Z}_{\lambda}}\frac{|\bar{\Z}_{\lambda^*}|}{|\bar{\Z}_{\lambda}|}\exp\left(2C_2\epsilon(\Z_{\lambda})^2-D\epsilon(\Z_{\lambda})^2+D\epsilon(\Z_{\lambda^*})^2\right) \\
\nonumber &\leq& \exp((D+1)\epsilon(\Z_{\lambda^*})^2)\sum_{\lambda\in\Lambda}\exp\left(-(D-2C_2)\epsilon(\Z_{\lambda})^2\right) \\
\nonumber &\leq& \exp((D+1)\epsilon(\Z_{\lambda^*})^2+1),
\end{eqnarray}
as long as $D\geq 2C_2+1$. The last inequality uses the fact that $\sum_{\lambda\in\Lambda}\exp\left(-\epsilon(\Z_{\lambda})^2\right)\leq e$ under the condition (\ref{eq:G-vdV-Z}) (see (\ref{eq:sum-to-int}) in Section \ref{sec:pf}). Therefore, the condition is satisfied, and we formulate this result as the following lemma.
\begin{lemma}\label{lem:slm-sieve}
Assume (\ref{eq:G-vdV-Z}) and $D\geq 2C_2+1$. Then, there exists a constant $C_4>0$ only depending on $D$, such that for any $\lambda^*\in\Lambda$ and any $Z^*\in\bar{\Z}_{\lambda^*}$, we have
$$\sum_{Z\in\mathcal{Z}}\frac{\gamma(Z)\delta(Z)}{w(\lambda^*)\nu_{\lambda^*}(Z^*)\delta(Z^*)}\exp\left(2C_2\epsilon(Z)^2\right)\leq \exp(C_4\epsilon(Z^*)^2).$$
\end{lemma}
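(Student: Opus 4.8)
The plan is to establish the bound by substituting the explicit expressions for $w(\lambda)$, $\nu_\lambda(Z)$, $\gamma(Z)$ and $\delta(Z)$ and tracking the resulting cancellations; the computation is essentially algebraic, with the only analytic input being one summability estimate at the end. The crucial observation is that the Gamma-function ratios disappear. For $Z\in\bar{\Z}_\lambda$ the factor $\frac{\Gamma(\ell(\Z_\lambda))}{\Gamma(\ell(\Z_\lambda)/2)}$ sitting inside $\gamma(Z)$ and the reciprocal factor defining $\delta(Z)$ cancel, leaving $\gamma(Z)\delta(Z)=\frac{1}{|\bar{\Z}_\lambda|}\exp(-D\epsilon(\Z_\lambda)^2)$; symmetrically the denominator simplifies to $w(\lambda^*)\nu_{\lambda^*}(Z^*)\delta(Z^*)=\frac{1}{|\bar{\Z}_{\lambda^*}|}\exp(-D\epsilon(\Z_{\lambda^*})^2)$. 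This is exactly why the weight carries the factor $\frac{\Gamma(\ell(\Z_\lambda))}{\Gamma(\ell(\Z_\lambda)/2)}$ and why the non-naive choice $\delta(Z)=\frac{\Gamma(\ell(\Z_\lambda)/2)}{\Gamma(\ell(\Z_\lambda))}$ (rather than $\delta(Z)=1$) is made.

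I would then reorganize the sum over $\mathcal{Z}$ as a double sum, writing $\mathcal{Z}=\cup_{\lambda\in\Lambda}\bar{\Z}_\lambda$ with the blocks disjoint and using $\epsilon(Z)=\epsilon(\Z_\lambda)$ for $Z\in\bar{\Z}_\lambda$. After the cancellation the summand depends on $Z$ only through $\lambda$, so the inner sum over $Z\in\bar{\Z}_\lambda$ merely contributes a factor $|\bar{\Z}_\lambda|$ that cancels the $1/|\bar{\Z}_\lambda|$ in the ratio. The whole expression then collapses to
$$|\bar{\Z}_{\lambda^*}|\,\exp\left(D\epsilon(\Z_{\lambda^*})^2\right)\sum_{\lambda\in\Lambda}\exp\left((2C_2-D)\epsilon(\Z_\lambda)^2\right),$$
which is the chain of identities displayed through (\ref{eq:cancel-Gamma}).

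It remains to control the two $\lambda^*$-free pieces. The leading cardinality is absorbed through $\log|\bar{\Z}_{\lambda^*}|\leq\epsilon(\Z_{\lambda^*})^2$, which holds because the model count enters the complexity rate $\epsilon(\Z_\lambda)^2$; this upgrades the prefactor to $\exp((D+1)\epsilon(\Z_{\lambda^*})^2)$. For the series, the hypothesis $D\geq 2C_2+1$ forces $(2C_2-D)\epsilon(\Z_\lambda)^2\leq-\epsilon(\Z_\lambda)^2$, so it is dominated by $\sum_{\lambda\in\Lambda}\exp(-\epsilon(\Z_\lambda)^2)$.

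The one genuinely non-algebraic step, and the main obstacle, is the summability estimate $\sum_{\lambda\in\Lambda}\exp(-\epsilon(\Z_\lambda)^2)\leq e$. I would derive it from condition (\ref{eq:G-vdV-Z}) by a layer-cake argument: grouping indices according to the integer $t$ with $t-1<\epsilon(\Z_\lambda)^2\leq t$, the count in each block is at most $t$, so the sum is bounded by $\sum_{t\geq 1}t\,e^{-(t-1)}$, a convergent series controlled by a universal constant; this is the bound recorded as (\ref{eq:sum-to-int}) in Section \ref{sec:pf}. Combining the three pieces yields $\exp((D+1)\epsilon(\Z_{\lambda^*})^2+1)$, and since $\epsilon(Z^*)^2=\epsilon(\Z_{\lambda^*})^2\geq 1$ the additive constant is absorbed into the exponent, so the claim holds with $C_4=D+2$.
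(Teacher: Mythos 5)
Your proof is correct and follows essentially the same route as the paper: the same cancellation of the Gamma-function ratios via the choice of $\delta(Z)$, the same reduction of the double sum over $\lambda$ and $Z\in\bar{\Z}_\lambda$, the absorption of $|\bar{\Z}_{\lambda^*}|$ into $\exp(\epsilon(\Z_{\lambda^*})^2)$, and the summability bound $\sum_{\lambda\in\Lambda}\exp(-\epsilon(\Z_{\lambda})^2)\leq e$ derived from (\ref{eq:G-vdV-Z}) exactly as in (\ref{eq:sum-to-int}). The resulting constant $C_4=D+2$ matches the paper's bound $\exp((D+1)\epsilon(\Z_{\lambda^*})^2+1)$ after absorbing the additive $1$ using $\epsilon(Z^*)^2\geq 1$.
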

The choice of $\delta(Z)=\frac{\Gamma(\ell(\Z_{\lambda})/2)}{\Gamma(\ell(\Z_{\lambda}))}$ is crucial for the cancellation of the gamma functions in the equality (\ref{eq:cancel-Gamma}). This is the benefit due to the flexibility of the conditions of Theorem \ref{thm:EB-con} that allow for a map $\delta(Z)$ that is not necessarily $1$.

Next, we verify the condition (\ref{eq:EB-test}) with the loss function $L(\theta,\theta^*)=\|\theta-\theta^*\|^2$. The construction of the testing procedure follows the exact same idea of (\ref{eq:test-regression}), but with more abstract notation. For any $Z,Z^*\in\mathcal{Z}$, we use the $P_{Z\cup Z^*}\in\mathbb{R}^{N\times N}$ for the projection matrix onto the subspace spanned by the columns of $\X_Z$ and $\X_{Z^*}$. Define
$$\phi_Z=\indc{\|P_{Z\cup Z^*}(Y-\X_{Z^*}(B^*))\|^2>6(\epsilon(Z)^2+\epsilon(Z^*)^2)},$$
the the testing procedure is
\begin{equation}
\phi = \max_{Z\in\mathcal{Z}}\phi_Z. \label{eq:def-test-slm}
\end{equation}
The analysis of the testing error is also the same as that of (\ref{eq:test-regression}), and we state the result below.
\begin{lemma}\label{lem:slm-test}
Assume (\ref{eq:G-vdV-Z}). Under the setting of structured linear models, the testing procedure (\ref{eq:def-test-slm}) satisfies
\begin{eqnarray*}
P_{\X_{Z^*}(B^*)}\phi &\leq& \exp(-\epsilon(Z^*)^2 + 1), \\
\sup_{\theta\in\Theta_Z: \|\theta-\X_{Z^*}(B^*)\|^2\geq\epsilon^2}P_{\theta}(1-\phi) &\leq& \exp\left(-\frac{2}{3}\epsilon^2 + 5(\epsilon(Z)^2+\epsilon(Z^*)^2)\right),
\end{eqnarray*}
for any $\epsilon^2\geq\epsilon(Z^*)^2$ and any $Z\in\mathcal{Z}$.
\end{lemma}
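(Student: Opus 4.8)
Both bounds will follow from standard chi-squared deviation inequalities once the test statistic is identified, mirroring the proof of Lemma~\ref{lem:spsl-reg-test}; the genuinely new point is that the union bound now runs over the abstract structure set $\mathcal{Z}$ rather than over subsets of $[p]$. I would begin with the geometric observation that, for $Z\in\bar{\Z}_\lambda$, the matrix $P_{Z\cup Z^*}$ projects onto the column span of $\X_Z$ and $\X_{Z^*}$, so its rank is at most $\ell(\Z_\lambda)+\ell(\Z_{\lambda^*})\leq\epsilon(Z)^2+\epsilon(Z^*)^2$ and it fixes both $\X_{Z^*}(B^*)$ and every $\theta\in\Theta_Z$. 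Writing $Y-\X_{Z^*}(B^*)=(\theta-\X_{Z^*}(B^*))+g$ with $g\sim N(0,I_N)$ under $P_\theta$, this gives $P_{Z\cup Z^*}(Y-\X_{Z^*}(B^*))=(\theta-\X_{Z^*}(B^*))+P_{Z\cup Z^*}g$, so $\|P_{Z\cup Z^*}(Y-\X_{Z^*}(B^*))\|^2$ is a noncentral $\chi^2$ with at most $\epsilon(Z)^2+\epsilon(Z^*)^2$ degrees of freedom and noncentrality exactly $\|\theta-\X_{Z^*}(B^*)\|^2$.

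For the first inequality I would take $\theta=\X_{Z^*}(B^*)$, so each statistic is a central $\chi^2$ with mean at most $\epsilon(Z)^2+\epsilon(Z^*)^2$; a Laurent--Massart bound with the threshold $6(\epsilon(Z)^2+\epsilon(Z^*)^2)$ yields $\Prob(\phi_Z=1)\leq\exp(-c_0(\epsilon(Z)^2+\epsilon(Z^*)^2))$ for a constant $c_0>1$. The crux is then the union bound $P_{\X_{Z^*}(B^*)}\phi\leq\sum_{Z\in\mathcal{Z}}\Prob(\phi_Z=1)=\exp(-c_0\epsilon(Z^*)^2)\sum_{\lambda\in\Lambda}|\bar{\Z}_\lambda|\exp(-c_0\epsilon(\Z_\lambda)^2)$. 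Since the complexity $\epsilon(\Z_\lambda)^2$ dominates the log-cardinality $\log|\bar{\Z}_\lambda|$ of the structures at level $\lambda$, the factor $|\bar{\Z}_\lambda|$ is absorbed, and assumption~(\ref{eq:G-vdV-Z}), in the summable form $\sum_{\lambda\in\Lambda}\exp(-\epsilon(\Z_\lambda)^2)\leq e$ recorded in~(\ref{eq:sum-to-int}), controls the sum over $\lambda$. This produces the claimed $\exp(-\epsilon(Z^*)^2+1)$, the additive $1$ being exactly the $\log e$ from the summability estimate.

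The second inequality needs no union bound: because $\phi\geq\phi_Z$, for $\theta\in\Theta_Z$ we have $P_\theta(1-\phi)\leq P_\theta(\phi_Z=0)=P_\theta(\|P_{Z\cup Z^*}(Y-\X_{Z^*}(B^*))\|^2\leq 6(\epsilon(Z)^2+\epsilon(Z^*)^2))$, a lower-tail event for the noncentral $\chi^2$ above. If $\frac{2}{3}\epsilon^2\leq 5(\epsilon(Z)^2+\epsilon(Z^*)^2)$ the asserted bound is at least $1$ and holds trivially, so I would work in the complementary regime where the noncentrality $\|\theta-\X_{Z^*}(B^*)\|^2\geq\epsilon^2$ exceeds the threshold. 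There the triangle inequality bounds the statistic below by $(\|\theta-\X_{Z^*}(B^*)\|-\|P_{Z\cup Z^*}g\|)^2$, so the lower-tail event is contained in an upper-tail event for the central variable $\|P_{Z\cup Z^*}g\|^2$ of at most $\epsilon(Z)^2+\epsilon(Z^*)^2$ degrees of freedom, and a Gaussian-norm deviation inequality delivers the exponent $-\frac{2}{3}\epsilon^2+5(\epsilon(Z)^2+\epsilon(Z^*)^2)$ after tracking constants, just as in the regression case. The main obstacle is the Type~I union bound: unlike the sparse models, where the number of candidate supports is a binomial coefficient bounded by $\exp(\epsilon(S)^2)$, here the count of structures must be tamed through the complexity functional $\epsilon(\Z_\lambda)^2$ and~(\ref{eq:G-vdV-Z}), and the delicate point is to choose the chi-squared constant $c_0$ large enough that the decay $\exp(-c_0\epsilon(\Z_\lambda)^2)$ survives multiplication by $|\bar{\Z}_\lambda|$ and still leaves the exact coefficient on $\epsilon(Z^*)^2$ needed for the stated bound.
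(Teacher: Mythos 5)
Your proposal follows essentially the same route as the paper's proof: for the Type I error, a union bound over $\mathcal{Z}$ combined with the chi-squared tail bound $\mathbb{P}(\chi_d^2>t)\leq\exp\left(\frac{2}{3}d-\frac{t}{3}\right)$, absorbing $\log|\bar{\Z}_{\lambda}|$ into $\epsilon(\Z_{\lambda})^2$ and invoking (\ref{eq:sum-to-int}) to control the sum over $\lambda$; and for the Type II error, reducing to the single test $\phi_Z$, using that $P_{Z\cup Z^*}$ fixes both $\theta$ and $\X_{Z^*}(B^*)$, and converting the lower-tail acceptance event into an upper-tail event for the central $\chi^2_{\ell(\Z_{\lambda})+\ell(\Z_{\lambda^*})}$. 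The only difference is cosmetic --- you pass through the reverse triangle inequality where the paper uses a quadratic inequality of the form $\|a+b\|^2\gtrsim\|a\|^2-\|b\|^2$ --- and carried out carefully either version delivers a smaller numerical coefficient on $\epsilon^2$ than the stated $\frac{2}{3}$ (a looseness already present in the paper's own derivation), which is immaterial since only the existence of positive constants is used downstream.
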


Last but not least, we verify the condition (\ref{eq:EB-pm}) with $\rho=2$ so that $D_{\rho}(P_{\theta^*}\|P_{\theta})=\|\theta-\theta^*\|^2$. Recall that for any $Z\in\bar{\Z}_{\lambda}$, $\Gamma_Z$ is the probability measure of $\X_Z(B)$ with $B\sim f_{\ell(\Z_{\lambda}),\X_Z,\tau}$.
\begin{lemma}\label{lem:slm-pm}
For any $\tau>0$, there exists some constant $C_2>0$ only depending on $\tau$, such that
$$\frac{\Gamma_Z\left(\left\{\theta\in\Theta_Z: \|\theta-\X_{Z^*}(B^*)\|^2\leq \epsilon^2\right\}\right)}{\Gamma_{Z^*}\left(\left\{\theta\in\Theta_{Z^*}: \|\theta-\X_{Z^*}(B^*)\|^2\leq \epsilon(Z^*)^2\right\}\right)}\leq \frac{\delta(Z)}{\delta(Z^*)}\exp\left(\frac{1}{6}\epsilon^2 + C_2\left(\epsilon(Z^*)^2+\epsilon(Z)^2\right)\right),$$
for any $\epsilon^2\geq \epsilon(Z^*)^2$ and any $Z\in\mathcal{Z}$.
\end{lemma}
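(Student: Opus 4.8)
Throughout write $\ell=\ell(\Z_\lambda)$ and $\ell^*=\ell(\Z_{\lambda^*})$ for the two intrinsic dimensions and set $\theta^*=\X_{Z^*}(B^*)$; note that this is a purely geometric estimate on the prior measures $\Gamma_Z$, with no likelihood involved. The plan is to first reduce $\Gamma_Z$ to an explicit density on $\Theta_Z$ and then bound the numerator and denominator of the prior mass ratio by elementary volume-and-tail estimates. Since $\det(\X_Z^T\X_Z)>0$, the map $B\mapsto\X_Z(B)$ is a linear bijection from $\R^{\ell}$ onto $\Theta_Z$ with volume distortion $\sqrt{\det(\X_Z^T\X_Z)}$, so pushing the elliptical Laplace law (\ref{eq:def-e-lap}) forward through it turns it into the density
\begin{equation*}
g_Z(\theta)=\frac{1}{2}\left(\frac{\tau}{\sqrt{\pi}}\right)^{\ell}\frac{\Gamma(\ell/2)}{\Gamma(\ell)}\,e^{-\tau\norm{\theta}}=\frac{1}{2}\left(\frac{\tau}{\sqrt{\pi}}\right)^{\ell}\delta(Z)\,e^{-\tau\norm{\theta}}
\end{equation*}
with respect to the $\ell$-dimensional Lebesgue measure on $\Theta_Z$. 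The factor $\delta(Z)=\Gamma(\ell/2)/\Gamma(\ell)$ appears automatically here, which is exactly why $\delta$ was chosen this way: it pulls out of both masses and produces the prefactor $\delta(Z)/\delta(Z^*)$.

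I would then bound the two masses separately. For the numerator, the set $\{\theta\in\Theta_Z:\norm{\theta-\theta^*}\le\epsilon\}$ sits inside the $\ell$-dimensional ball of radius $\epsilon$ centered at the projection of $\theta^*$ onto $\Theta_Z$, so its volume is at most $\pi^{\ell/2}\epsilon^{\ell}/\Gamma(\ell/2+1)$, and on it $\norm{\theta}\ge\norm{\theta^*}-\epsilon$ gives $e^{-\tau\norm{\theta}}\le e^{-\tau\norm{\theta^*}+\tau\epsilon}$. For the denominator, $\theta^*\in\Theta_{Z^*}$ means the ball of radius $\epsilon(Z^*)$ lies entirely in $\Theta_{Z^*}$ and has full volume $\pi^{\ell^*/2}\epsilon(Z^*)^{\ell^*}/\Gamma(\ell^*/2+1)$, where $e^{-\tau\norm{\theta}}\ge e^{-\tau\norm{\theta^*}-\tau\epsilon(Z^*)}$. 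Forming the ratio, the powers of $\tau/\sqrt{\pi}$ merge with the $\sqrt{\pi}$ powers in the volume factors to leave pure powers of $\tau$, the term $\norm{\theta^*}$ cancels between numerator and denominator (this cancellation, inherited from the exponential tail of the prior, is what makes the estimate hold uniformly over unbounded $B^*$), and the surviving tail leaves $e^{\tau(\epsilon+\epsilon(Z^*))}$, which I would absorb through $\tau\epsilon\le\tfrac{1}{12}\epsilon^2+3\tau^2$ and $\tau\epsilon(Z^*)\le\tfrac12\epsilon(Z^*)^2+\tfrac12\tau^2$.

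The remaining and main work is the Gamma-function factor
\begin{equation*}
\frac{(\tau\epsilon)^{\ell}}{\Gamma(\ell/2+1)}\cdot\frac{\Gamma(\ell^*/2+1)}{(\tau\epsilon(Z^*))^{\ell^*}}.
\end{equation*}
For the first factor I would use $\Gamma(\ell/2+1)\ge(\ell/2e)^{\ell/2}$ and then the elementary inequality $\log y\le y/t+\log t$ with $t=12e\tau^2$, turning $\tfrac{\ell}{2}\log(2e\tau^2\epsilon^2/\ell)$ into $\tfrac{1}{12}\epsilon^2+C(\tau)\ell$, after which $\ell\le\epsilon(Z)^2$ gives $\tfrac{1}{12}\epsilon^2+C(\tau)\epsilon(Z)^2$. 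For the second factor I would apply Stirling from above, $\log\Gamma(\ell^*/2+1)\le\tfrac{\ell^*}{2}\log\ell^*+O(\ell^*)$, and cancel its leading term against $\ell^*\log\epsilon(Z^*)\ge\tfrac{\ell^*}{2}\log\ell^*$, the latter being precisely guaranteed by the definition $\epsilon(Z^*)^2=\ell^*+\log\ell^*\ge\ell^*$. This is the delicate step: the super-polynomial growth of $\Gamma(\ell^*/2+1)$ must be tamed, and it is tamed exactly by the $\epsilon(Z^*)^{\ell^*}$ factor together with the $\log\ell^*$ term built into $\epsilon(\Z_{\lambda^*})^2$; what survives is $O(\ell^*)+C(\tau)\ell^*+O(\log\ell^*)\le C(\tau)\epsilon(Z^*)^2$. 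Collecting the $\tfrac{1}{12}\epsilon^2$ from the tail and the $\tfrac{1}{12}\epsilon^2$ from the first Gamma factor produces the claimed $\tfrac16\epsilon^2$, with everything else absorbed into $C_2(\epsilon(Z)^2+\epsilon(Z^*)^2)$ for a constant $C_2$ depending only on $\tau$. I expect this Gamma/volume bookkeeping --- keeping the $\epsilon^2$ coefficient at exactly $\tfrac16$ while absorbing the Gamma growth --- to be the only real obstacle; everything else is routine.
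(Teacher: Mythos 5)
Your proposal is correct and follows essentially the same route as the paper's proof: the same change of variables that cancels $\sqrt{\det(\X_Z^T\X_Z)}$ and isolates $\delta(Z)/\delta(Z^*)$ from the elliptical Laplace normalizing constants, the same projection (Pythagorean) reduction of the numerator to a ball in $\Theta_Z$, the same cancellation of $e^{\tau\|\X_{Z^*}(B^*)\|}$ between numerator and denominator, and the same absorption of the linear-in-$\epsilon$ terms into $\tfrac{1}{12}\epsilon^2+C(\tau)$. Your Stirling-plus-$\epsilon(Z^*)^2\geq\ell^*$ handling of the denominator's volume factor is just the reciprocal form of the paper's bound $\bigl(\epsilon(\Z_{\lambda^*})/\sqrt{\ell(\Z_{\lambda^*})}\bigr)^{\ell(\Z_{\lambda^*})}\geq 1$, so no substantive difference remains.
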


Lemma \ref{lem:slm-sieve}, Lemma \ref{lem:slm-test} and Lemma \ref{lem:slm-pm} together imply that the three conditions of Theorem \ref{thm:EB-con} are satisfied for some constants $C,C_1,C_2,C_3,C_4,C_5>0$. We can then immediately write down a theorem for the convergence rate of the empirical Bayes posterior by applying Theorem \ref{thm:EB-con} to the current setting.

\begin{thm}\label{thm:slm-EB}
Consider the empirical Bayes procedure defined with the elliptical Laplace distribution (\ref{eq:def-e-lap}) and the weight function (\ref{eq:fancy-weight}) satisfying $D\geq D_0$ for some constant $D_0>0$ only depending on $\tau$. Assume (\ref{eq:G-vdV-Z}). Then, the conditions (\ref{eq:EB-test})-(\ref{eq:EB-pm}) are satisfied under the setting of structured linear models. As a consequence, we have
$$P_{\X_{Z^*}(B^*)}\Pi_{\wh{\lambda}}\left(\|\X_Z(B)-\X_{Z^*}(B^*)\|^2 > M\epsilon(\Z_{\lambda^*})^2\Big|Y\right)\leq 6\exp(-C'\epsilon(\Z_{\lambda^*})^2),$$
with some constants $M,C'>0$ uniformly over all $B^*\in\mathbb{R}^{\ell(\Z_{\lambda^*})}$ and all $Z^*\in\bar{\Z}_{\lambda^*}$.
\end{thm}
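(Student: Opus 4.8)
The plan is to invoke Theorem~\ref{thm:EB-con} directly; the preceding discussion has already cast $\Pi_\lambda$ into the prior-decomposition form~(\ref{eq:prior-decomp}) with $\Z=\cup_{\lambda\in\Lambda}\bar{\Z}_\lambda$, $\nu_\lambda(Z)=\frac{1}{|\bar{\Z}_\lambda|}\indc{Z\in\bar{\Z}_\lambda}$, and $\Gamma_Z$ the law of $\X_Z(B)$ for $B\sim f_{\ell(\Z_\lambda),\X_Z,\tau}$, supported on the subspace $\Theta_Z$ (here the $\{\bar{\Z}_\lambda\}$ are taken mutually disjoint, so each $Z$ lies in a unique $\bar{\Z}_\lambda$ and the maps below are well defined). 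First I would fix $Z^*\in\bar{\Z}_{\lambda^*}$ and $B^*$, so that $\theta^*=\X_{Z^*}(B^*)\in\Theta_{Z^*}$, and specify the two auxiliary maps required by the theorem: the rate function $\epsilon(Z)=\epsilon(\Z_\lambda)$ for $Z\in\bar{\Z}_\lambda$ (equivalently $\epsilon(Z)^2=\ell(\Z_\lambda)+\log|\ell(\Z_\lambda)|$, the intrinsic complexity and, in most instances, the minimax rate) and the map $\delta(Z)=\frac{\Gamma(\ell(\Z_\lambda)/2)}{\Gamma(\ell(\Z_\lambda))}$. The choice of $\delta$ is not cosmetic: it is engineered to be the reciprocal of the Gamma-ratio appearing in the elliptical-Laplace normalizer, so that the factor $\delta(Z)/\delta(Z^*)$ cancels the corresponding Gamma factors in both~(\ref{eq:EB-sieve}) and~(\ref{eq:EB-pm}).

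Next I would verify the three hypotheses of Theorem~\ref{thm:EB-con} one at a time by citing the three lemmas already established for this setting. Condition~(\ref{eq:EB-sieve}) is exactly Lemma~\ref{lem:slm-sieve}, which holds under the complexity condition~(\ref{eq:G-vdV-Z}) provided $D\geq 2C_2+1$; its verification hinges on the $\delta$-cancellation together with the summability bound $\sum_{\lambda\in\Lambda}\exp(-\epsilon(\Z_\lambda)^2)\leq e$. Condition~(\ref{eq:EB-test}) is supplied by Lemma~\ref{lem:slm-test} with $C=\frac{2}{3}$ and $C_1=1$, contributing the value $5$ to $C_2$. Condition~(\ref{eq:EB-pm}) is Lemma~\ref{lem:slm-pm}, yielding $C_5=\frac{1}{6}$ together with a further $\tau$-dependent contribution to $C_2$; one simply takes $C_2$ to be the larger of the two. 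I would then check the single compatibility requirement $C>2C_5$, i.e. $\frac{2}{3}>\frac{1}{3}$, which holds trivially, and take $D\geq D_0$ with $D_0$ large enough to satisfy both $D\geq 2C_2+1$ and the remaining constant bookkeeping; the constants $C_3,C_4$ are then fixed.

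With all three conditions in force, Theorem~\ref{thm:EB-con} yields $P_{\theta^*}\Pi_{\wh{\lambda}}(\|\theta-\X_{Z^*}(B^*)\|^2>M\epsilon(\Z_{\lambda^*})^2\mid Y)\leq 4\exp(-C'\epsilon(\Z_{\lambda^*})^2)$, where $\epsilon(Z^*)=\epsilon(\Z_{\lambda^*})$ since $Z^*\in\bar{\Z}_{\lambda^*}$. The one bookkeeping subtlety is that Lemma~\ref{lem:slm-test} bounds the type-I error by $\exp(-\epsilon(Z^*)^2+1)=e\,\exp(-\epsilon(Z^*)^2)$ rather than $\exp(-\epsilon(Z^*)^2)$, so the first of the three error terms in the proof of Theorem~\ref{thm:EB-con} carries an extra factor of $e$; this replaces the leading constant by at most $e+1+2<6$, giving the stated bound $6\exp(-C'\epsilon(\Z_{\lambda^*})^2)$, uniformly over $B^*$ and $Z^*\in\bar{\Z}_{\lambda^*}$.

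I do not expect any genuine obstacle inside this theorem — it is a clean assembly of the three lemmas through Theorem~\ref{thm:EB-con}. The real work lives in the lemmas, and among them the most delicate is Lemma~\ref{lem:slm-sieve}: the sum over the index set $\Lambda$ is weighted by $\exp((2C_2-D)\epsilon(\Z_\lambda)^2)$, and it converges only because $D$ is taken large relative to $C_2$ and because~(\ref{eq:G-vdV-Z}) controls how many $\lambda$ share a given complexity level. Crucially, this convergence is available only after the Gamma-ratio in the effective weight $\gamma(Z)$ has been exactly offset by $\delta(Z)$; without the freedom to take $\delta\neq 1$ afforded by Theorem~\ref{thm:EB-con}, the constants would not close.
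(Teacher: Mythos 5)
Your proposal is correct and follows exactly the paper's route: the paper proves Theorem~\ref{thm:slm-EB} by the same assembly of Lemmas~\ref{lem:slm-sieve}, \ref{lem:slm-test} and \ref{lem:slm-pm} into the hypotheses of Theorem~\ref{thm:EB-con}, with the same choices $\epsilon(Z)=\epsilon(\Z_\lambda)$ and $\delta(Z)=\Gamma(\ell(\Z_\lambda)/2)/\Gamma(\ell(\Z_\lambda))$. Your accounting for the leading constant ($e+1+2<6$, absorbing the extra factor $e$ from the Type-1 error bound of Lemma~\ref{lem:slm-test}) is a detail the paper leaves implicit but is exactly the right explanation for why the bound reads $6\exp(-C'\epsilon(\Z_{\lambda^*})^2)$ rather than the $4\exp(-C'\epsilon(Z^*)^2)$ of Theorem~\ref{thm:EB-con}.
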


Theorem \ref{thm:slm-EB} shows that the empirical Bayes procedure enjoys the same convergence rate as the hierarchical Bayes posterior in (\ref{eq:slm-hb-rate}). To close this section, we briefly discuss the implications of three examples. More examples in the framework of structured linear models are referred to \cite{gao2015general}.

\begin{example}[Sparse linear regression]
Consider a regression problem with fixed design $X\beta$, where $X\in\mathbb{R}^{n\times p}$ and $\beta\in\mathbb{R}^p$. A sparse regression coefficient vector can be written as $\beta^T=(\beta_S^T,0_{S^c}^T)$ for some $S\subset[p]$. This is a special case of the general structured linear model with $N=n$, $Z=S$, $\lambda=s$, $\Lambda=[p]$, $\mathcal{Z}_{s}=\{S\subset[p]:|S|=s\}$, $\ell(\Z_s)=s$, $B=\beta_S$ and the linear operator $\X_S:\beta_S\mapsto X_S\beta_S$. The empirical Bayes posterior distribution selects the hyperparameter $\wh{s}$, and achieves the rate $\ell(\Z_s)+\log|\Z_s|=s+\log{p\choose s}\asymp s\log\left(\frac{ep}{s}\right)$. The framework of structured linear models gives an alternative way to achieve the optimal rate in addition to the procedure induced by the spike-and-slab prior (Theorem \ref{eq:EB-reg}).
\end{example}

\begin{example}[Biclustering]
In a biclustering model, the observation is a matrix $Y\in\mathbb{R}^{n\times m}$. For any $i\in[n]$ and $j\in[m]$, $Y_{ij}\sim N(B_{z_1(i),z_2(j)},1)$ for some label vectors $z_1\in[k]^n$ and $z_2\in[l]^m$ and a matrix $B\in\mathbb{R}^{k\times l}$. In other words, there are $k$ row clusters and $l$ column clusters, and the mean of $Y$ admits a checkerboard structure. The biclustering model can be viewed as a special example of the general framework with $N=nm$, $Z=(z_1,z_2)$, $\lambda=(k,l)$, $\Lambda=[n]\times [m]$, $\Z_{k,l}=[k]^n\times [l]^m$, $\Z_{k,l}=kl$, and the linear operator $\X_{z_1,z_2}:B \mapsto (B_{z_1(i)z_2(j)})_{(i,j)\in[n]\times [m]}$. A careful reader may notice that the structured linear model framework works with $B$ and $\X_Z(B)$ that are vectors, but these two objects are matrices in the context of biclustering. This issue is only a matter of representation, and can be resolved by alternative notation using vectorization and Kronecker products. When specialized to the biclustering problem, the empirical Bayes procedure selects both the number of the row clusters and the number of column clusters, and achieves the rate $\ell(\Z_{k,l})+\log|\Z_{k,l}|=kl+k\log n+l\log m$, which is the minimax rate of biclustering \citep{gao2016optimal}.
\end{example}

\begin{example}[Multi-task learning with group sparsity]
In multitask learning, one observe a matrix $Y\in\mathbb{R}^{n\times m}$, whose mean is modeled by $XA$ with some design matrix $X\in\mathbb{R}^{n\times p}$ and regression coefficient matrix $A\in\mathbb{R}^{p\times m}$. We assume that the $m$ regression problems share the same sparsity pattern, which can be modeled by a group sparse structure. In other words, there is some $S\subset[p]$ such that the matrix $A$ has zero entries for all rows in $S^c$. We use the notation $A_{S*}$ for the submatrix of $A$ with rows in $S$. To put the problem into the general framework, let $Z=S$, $\lambda=s$, $\Lambda=[p]$, $\Z=\{S\subset[p]:|S|=s\}$, $\ell(\Z_s)=ms$ and $B=A_{S*}$. The linear operator is given by $\X_S:A_{S*}\mapsto X_SA_{S*}$. Similar to sparse linear regression, here, the empirical Bayes procedure also selects the sparsity, and achieves the rate $\ell(\Z_s)+\log|\Z_s|=ms+\log{p\choose s}\asymp s\left(m+ \log\left(\frac{ep}{s}\right)\right)$. This rate is known to be the minimax rate of the problem \citep{lounici2011oracle}.
\end{example}

\section{Additional Proofs}\label{sec:pf}

\subsection{Proofs of Theorem \ref{thm:den-exp-EB} and Theorem \ref{thm:den-mixture-EB}}

\begin{proof}[Proof of Theorem \ref{thm:den-exp-EB}]
We will apply Theorem \ref{thm:EB-ms} to prove this result. To put the infinite dimensional exponential families into the framework of Theorem \ref{thm:EB-ms}, we have $\Theta^{(k)}=\{\theta=(\theta_j)_{j=1}^{\infty}: \theta_j=0\text{ for all }j>k\}$ and $\theta^{(k)}=\theta$ for all $k$. Moreover, we have $\pi(k)\propto w(k)$ and thus $\pi(k)=\frac{\tau^k}{k!}e^{-\tau}$.
By Lemma B.10 of \cite{zhang2017convergence}, the two conditions (\ref{eq:EB-con-test}) and (\ref{eq:EB-con-sieve}) hold with $\epsilon_*^2=n\left(\frac{\log n}{n}\right)^{\frac{2\alpha}{2\alpha+1}}$ and loss $L(P_{\theta^*}^n,P_{\theta}^n)=nH^2(P_{\theta},P_{\theta^*})$. Thus, we only need to check the prior mass condition (\ref{eq:EB-pm-0}). Take $\rho=2$ and $k^*=\ceil{(n/\log n)^{\frac{1}{2\alpha+1}}}$. Then,
\begin{equation}
\Pi^{(k^*)}\left(\left\{\theta\in\Theta^{(k^*)}: D_{2}(P_{\theta^*}^n\|P_{\theta}^n)\leq C_3\epsilon_*^2\right\}\right)\geq \Pi^{(k^*)}(\wt{\Theta}), \label{eq:aos-exp}
\end{equation}
where
$$\wt{\Theta}=\left\{\theta=(\theta_j)_{j=1}^{\infty}: \theta_j\in[\theta_j^*-n^{-1/2},\theta_j^*+n^{-1/2}]\text{ for all }j\leq k\text{ and }\theta_j=0\text{ for all }j>k\right\}.$$
The inequality (\ref{eq:aos-exp}) holds since for any $\theta\in\wt{\Theta}$,
\begin{eqnarray}
\nonumber D_{2}(P_{\theta^*}^n\|P_{\theta}^n) &=& nD_{2}(P_{\theta^*}\|P_{\theta}) \\
\label{eq:lemma-B.12} &\leq& C_0n\exp(3\sqrt{2}\|\theta^*-\theta\|_1)\|\theta-\theta^*\|^2\\
\nonumber & = &C_0n\exp\left(3\sqrt{2}\left(\frac{k^*}{\sqrt{n}}+\sum_{j>k^*}|\theta_j^*|\right)\right)\left(\frac{k^*}{n}+\sum_{j>k^*}\theta_j^{*2}\right)\\
\nonumber &\leq&C_0n\exp\left(3\sqrt{2}\left(n^{\frac{1-2\alpha}{2+4\alpha}}+R\gamma_\alpha^{1/2}\right)\right)\left(\frac{k^*}{n}+(k^*)^{-2\alpha}R^2\right)\\
\nonumber &\leq& C_3\epsilon_*^2,
\end{eqnarray}
where $\gamma_{\alpha}=\sum_{j=1}^{\infty}j^{-2\alpha}$ is a constant for $\alpha>1/2$, and the inequality (\ref{eq:lemma-B.12}) is by Lemma B.12 of \cite{zhang2017convergence}. By Lemma B.8 of \cite{zhang2017convergence}, we have $\Pi^{(k^*)}(\wt{\Theta})\geq \exp(-C_2''\epsilon_*^2)$, and thus (\ref{eq:EB-pm-parameter}) holds. It is obvious that $\pi(k^*)=\frac{\tau^{k^*}}{k^*!}e^{-\tau}\geq \exp(-C_2'\epsilon_*^2)$, which implies (\ref{eq:EB-pm-model}). The condition (\ref{eq:EB-pm-0}) is a consequence of (\ref{eq:EB-pm-model}) and (\ref{eq:EB-pm-parameter}), and thus the proof is complete.
\end{proof}

\begin{proof}[Proof of Theorem \ref{thm:den-mixture-EB}]
Following the strategy of \cite{zhang2017convergence}, we introduce a surrogate density function $\wt{f}^*$ that is sufficiently close to $f^*$ and then apply Theorem \ref{thm:EB-ms} to $\wt{f}^*$. Define $\wt{f}^*(x)=\frac{f^*(x)\indc{x\in E}}{\int_E f^*(x)dx}$ with $E=\{x:f^*(x)\geq n^{-4}(\log n)^{4r}\}$ with the same $r$ defined in \cite{zhang2017convergence}. Specifically, $r$ is defined as $r = \frac{p}{\min\{p,\xi_3\}}+\max\{d_3+1,\frac{c_6}{\min\{p,\xi_3\}}\}$ with $p$, $\xi_3$ defined in (\ref{eq:kernel}) and (\ref{eq:true-condition3}) and $c_6$, $d_3$ defined in \cite{zhang2017convergence}. With the normal prior on $\mu$'s and Dirichlet distribution on $w$, one can verify that $c_6 = 2$ and $d_3=0$. Then $r = \frac{p}{\min\{p,\xi_3\}}+\max\{1,\frac{2}{\min\{p,\xi_3\}}\}$.

By the same argument used in the proof of Theorem 4.2 of \cite{zhang2017convergence}, we have
\begin{eqnarray}
\nonumber && P_{f^*}^n\int H^2(P_{\wh{k},\theta^{(\wh{k})}},P_{f^*})d\Pi^{(\wh{k})}(\theta^{(\wh{k})}|X_1,\cdots,X_n) \\
\label{eq:very-long} &\leq& 2P_{\wt{f}^*}^n\int H^2(P_{\wh{k},\theta^{(\wh{k})}},P_{\wt{f}^*})d\Pi^{(\wh{k})}(\theta^{(\wh{k})}|X_1,\cdots,X_n) + o\left(n^{-\frac{2\alpha}{2\alpha+1}}(\log n)^{\frac{2\alpha r}{\alpha+1}}\right).
\end{eqnarray}
We apply Theorem \ref{thm:EB-ms} to bound the first term of (\ref{eq:very-long}). Recall that for the location-scale matrix model,
\begin{eqnarray*}
\Theta^{(k)} &=& \Big\{\theta^{(k)} = (\mu,w,\sigma): \mu = (\mu_1,\cdots, \mu_k)\in\R^k, \\
\nonumber&& \qquad\qquad w = (w_1,\cdots, w_k)\in\Delta_k, \sigma\in\R_+\Big\},
\end{eqnarray*}
and $\pi(k)=\frac{\xi_0^k}{k!}e^{-\xi_0}$.
By Lemma B.15 of \cite{zhang2017convergence}, the two conditions (\ref{eq:EB-con-test}) and (\ref{eq:EB-con-sieve}) hold with $\epsilon_*^2=n^{\frac{1}{2\alpha+1}}(\log n)^{\frac{2\alpha r}{\alpha+1}}$ and loss $L(P_{f^*}^n,P_{k,\theta^{(k)}}^n)=nH^2(P_{f^*},P_{k,\theta^{(k)}})$. Moreover, Lemma B.16 of \cite{zhang2017convergence} shows that there exists some $k^*$ and some set
\begin{equation}
\wt{\Theta}\subset\left\{\theta^{(k^*)}\in\Theta^{(k^*)}: D_2(P_{\wt{f}^*}^n\|P_{k^*,\theta^{(k^*)}}^n)\leq C_3\epsilon_*^2\right\}, \label{eq:subset-good}
\end{equation}
such that $\pi(k^*)\Pi^{(k^*)}(\wt{\Theta})\geq \exp(-C_2\epsilon_*^2)$. The property (\ref{eq:subset-good}) of $\wt{\Theta}$ immediately implies the condition (\ref{eq:EB-pm-0}), and thus we have 
$$P_{\wt{f}^*}^n\int H^2(P_{\wh{k},\theta^{(\wh{k})}},P_{\wt{f}^*})d\Pi^{(\wh{k})}(\theta^{(\wh{k})}|X_1,\cdots,X_n)\lesssim n^{-\frac{2\alpha}{2\alpha+1}}(\log n)^{\frac{2\alpha r}{\alpha+1}},$$
according to Theorem \ref{thm:EB-ms}.
The proof is thus complete.
\end{proof}

\subsection{Proofs of Lemmas \ref{lem:spsl-sum}-\ref{lem:slm-pm}}

The proofs of Lemmas \ref{lem:spsl-sum}-\ref{lem:slm-pm} are given below. Note that the proof of Lemma \ref{lem:slm-sieve} is already stated in Section \ref{sec:slm}.

\begin{proof}[Proof of Lemma \ref{lem:spsl-sum}]
Choose $\lambda^*=\frac{\alpha+|S^*|-1}{p+\alpha+\beta-2}$, and then we have $\nu_{\lambda^*}(S^*)w(\lambda^*)=\gamma(S^*)$. We have
\begin{equation}
\sum_{S\subset[p]}\frac{\gamma(S)}{w(\lambda^*)\nu_{\lambda^*}(S^*)}\exp\left(2C_2|S|\log p\right) = \sum_{s=1}^p\sum_{S\subset[p]:|S|=s}\frac{\gamma(S)}{\gamma(S^*)}\exp(2C_2s\log  p). \label{eq:ensiferum}
\end{equation}
Since $\gamma(S)$ only depends on the cardinality of $S$, we use the notation $\gamma_{|S|}=\gamma(S)$.
For any $s\in[p]$, we have
\begin{eqnarray*}
\frac{\gamma_{s+1}}{\gamma_s} &=& \left(1+\frac{1}{\alpha+s-1}\right)^{\alpha+s-1}\left(1+\frac{1}{p-s+\beta-2}\right)^{-(p-s+\beta-2)}\frac{\alpha+s}{p-s+\beta-1}.
\end{eqnarray*}
As $(1+1/n)^n<e$ for any $n>0$, using the conditions of $\alpha$ and $\beta$, we have
$$p^{-C_1'}\leq \frac{\alpha}{e(p+\beta-1)}\leq \frac{\alpha+s}{e(p-s+\beta-1)}\leq\frac{\gamma_{s+1}}{\gamma_s}\leq \frac{(\alpha+s)e}{p-s+\beta-1}\leq \frac{e(\alpha+p)}{\beta-1}\leq p^{-C_2'},$$
for some constant $C_1'>C_2'>2C_2+1$. Then, we can further bound (\ref{eq:ensiferum}) by
\begin{eqnarray*}
\sum_{s=1}^p{p\choose s}\frac{\gamma_s}{\gamma_{s^*}}\exp(2C_2s\log p) &\leq& \exp(C_1's^*\log p)\sum_{s=1}^p\exp(-(C_1'-2C_2-1)s\log p) \\
&\leq& \exp(C_4s^*\log p),
\end{eqnarray*}
which is the desired result.
\end{proof}

\begin{proof}[Proofs of Lemma \ref{lem:spsl-seq-test} and Lemma \ref{lem:spsl-reg-test}]
We note that Lemma \ref{lem:spsl-seq-test} is a special case of Lemma \ref{lem:spsl-reg-test} with $X=I_p$ and $n=p$, and thus we only prove Lemma \ref{lem:spsl-reg-test}.
By Lemma 1 of \cite{laurent2000adaptive}, we have $\mathbb{P}\left(\chi_d^2 \geq d+2\sqrt{xd}+2x\right)\leq e^{-x}$
for any $x>0$.
This implies $\mathbb{P}(\chi_d^2>t)\leq \exp\left(\frac{2}{3}d-\frac{t}{3}\right)$ for any $t>2d$. The same bound also holds for $t\leq 2d$ since $\mathbb{P}(\chi_d^2>t)\leq 1\leq\exp\left(\frac{2}{3}d-\frac{t}{3}\right)$.
For the Type-1 error, we have
\begin{eqnarray*}
P_{\theta^*}\phi &\leq& \sum_{s=1}^p\sum_{S\subset[p]:|S|=s}P_{\theta^*}\left(\|P_{S\cup S^*}(Y-X\theta^*)\|^2 > 6\left(s\log p + s^*\log p\right)\right) \\
&\leq& \sum_{s=1}^p{p\choose s}\mathbb{P}\left(\chi_{s+s^*}^2>6\left(s\log p + s^*\log p\right)\right) \\
&\leq& \sum_{s=1}^p\exp\left(s\log p+\frac{2}{3}(s+s^*)-2(s\log p+s^*\log p)\right) \\
&\leq& \exp(-s^*\log p)\sum_{s=1}^p\exp\left(-\frac{1}{3}s\log p\right) \\
&\leq& \exp(-s^*\log p),
\end{eqnarray*}
where the last inequality assumes that $p\geq 8$. Now we analyze the Type-2 error. For any $\theta$ whose support is $S$, we write $|S|=s$. Then,
\begin{eqnarray*}
P_{\theta}(1-\phi) &\leq& P_{\theta}(1-\phi_S) \\
&\leq& P_{\theta}\left(\|X\theta-X\theta^*\|^2-\frac{1}{2}\|P_{S\cup S^*}(Y-X\theta)\|^2\leq 6\left(s\log p + s^*\log p\right)\right) \\
&\leq& \mathbb{P}\left(\chi_{s+s^*}^2>2\epsilon^2-12(s\log p +s^*\log p)\right) \\
&\leq& \exp\left(-\frac{2}{3}\epsilon^2 + 5(s\log p+s^*\log p)\right).
\end{eqnarray*}
The proof is complete.
\end{proof}

\begin{proof}[Proof of Lemma \ref{lem:spsl-seq-pm}]
Let us use the notation $s=|S|$ and $s^*=|S^*|$.
For any $\theta\in\Theta_S$ such that $\|\theta-\theta^*\|^2\leq\epsilon^2$ and any $\bar{\theta}\in\Theta_{S^*}$ such that $\|\bar{\theta}-\theta^*\|^2\leq s^*\log p$, we have
\begin{eqnarray*}
-\|\theta\|_1 + \|\bar{\theta}\|_1 &\leq& \|\theta-\bar{\theta}\|_1 \\
&\leq& \sqrt{s+s^*}\|\theta-\bar{\theta}\| \\
&\leq& \sqrt{s+s^*}\left(\epsilon + \sqrt{s^*\log p}\right) \\
&\leq& \xi\epsilon^2 + \left(\frac{1}{4\xi}+\frac{1}{2}\right)s+\left(1+\frac{1}{4\xi}\right)s^*\log p\\
&\leq&\xi\epsilon^2+M_1(s+s^*\log p),
\end{eqnarray*}
where $M_1 = 1+\frac{1}{4\xi}$ with $\xi>0$ to be determined later. 
Therefore,
\begin{eqnarray}
\nonumber && \frac{\Gamma_S\left(\left\{\theta\in\Theta_S:\|\theta-\theta^*\|^2\leq\epsilon^2\right\}\right)}{\Gamma_{S^*}\left(\theta\in\Theta_{S^*}:\|\theta-\theta^*\|^2\leq s^*\log p\right)} \\
\nonumber &=& \left(\frac{\tau}{2}\right)^{s-s^*}\frac{\int_{\|\theta-\theta^*\|^2\leq\epsilon^2}\exp(-\tau\|\theta\|_1)d\theta_S}{\int_{\|\theta-\theta^*\|^2\leq s^*\log p} \exp(-\tau\|\theta\|_1)d\theta_{S^*}} \\
\nonumber &\leq& \left(\frac{\tau}{2}\right)^{s-s^*}\exp\left(\tau\left(\xi\epsilon^2 +M_1(s+s^*\log p)\right)\right)\frac{\text{Vol}\left(\{\theta\in\Theta_S:\|\theta\|^2\leq \epsilon^2\}\right)}{\text{Vol}\left(\left\{\theta\in\Theta_{S^*}:\|\theta\|^2\leq s^*\log p\right\}\right)} \\
\nonumber &\leq& \left(\frac{1}{2}\right)^{s-s^*}\exp\left(\tau\left(\xi\epsilon^2 +M_1(s+s^*\log p)\right)\right)\frac{(2e\pi)^{s/2}\exp\left(s\log\frac{\epsilon}{s}\right)}{(2\pi e)^{s^*/2}\exp\left(s^*\log\frac{\sqrt{s^*\log p}}{s^*}\right)} \\
\label{eq:careful} &\leq& \left(\frac{1}{2}\right)^{s-s^*}\exp\left(\tau\left(\xi\epsilon^2 + M_1(s+s^*\log p)\right)\right)\frac{(2e\pi)^{s/2}\exp\left(\epsilon\right)}{(2\pi e)^{s^*/2}\exp\left(-\frac{1}{2}s^*\log s^*\right)}  \\
&\leq&\left(\frac{1}{2}\right)^{s-s^*}\exp\left(\tau\left(\xi\epsilon^2 + M_1(s+s^*\log p)\right)\right)\exp\left(\frac{s}{2}\log(2\pi e)+\xi\epsilon^2+\frac{1}{4\xi}+\frac{1}{2}s^*\log p\right)\nonumber\\
&\leq& \exp\left((\tau+1)\xi\epsilon^2+\left(M_1+\frac{1}{2}\log(2\pi e)\right)s+\left(M_1+\frac{1}{4\xi}+\frac{1}{2}\right)s^*\log p\right).\nonumber
\end{eqnarray}
The inequality (\ref{eq:careful}) uses the fact that the $\log x< x$ for all $x>0$.
Choosing $\xi = \frac{1}{6(\tau+1)}$ and $C_2 = M_1+\frac{1}{4\xi}+\frac{1}{2}\log(2\pi e)$ that only depend on $\tau$, we have
$$\frac{\Gamma_S\left(\left\{\theta\in\Theta_S:\|\theta-\theta^*\|^2\leq\epsilon^2\right\}\right)}{\Gamma_{S^*}\left(\theta\in\Theta_{S^*}:\|\theta-\theta^*\|^2\leq s^*\log p\right)} \leq\exp\left(\frac{1}{6}\epsilon^2+C_2(s+s^*\log p)\right).$$
The proof is complete.
\end{proof}

\begin{proof}[Proof of Lemma \ref{lem:spsl-reg-pm}]
We will use the notation $s=|S|$ and $s^*=|S^*|$ in the proof.
We first analyze the numerator. We have
\begin{eqnarray}
\nonumber && \Gamma_S\left(\left\{\theta\in\Theta_S: \|X(\theta-\theta^*)\|^2\leq\epsilon^2\right\}\right) \\
\nonumber &\leq&  \Gamma_S\left(\left\{\theta\in\Theta_S: \|X(\theta-\theta^*)\|^2\leq\epsilon^2, \|(\theta-\theta^*)_{S^{*c}}\|_1\leq 3\|(\theta-\theta^*)_{S^*}\|_1\right\}\right) \\
\nonumber && +  \Gamma_S\left(\left\{\theta\in\Theta_S: \|X(\theta-\theta^*)\|^2\leq\epsilon^2, \|(\theta-\theta^*)_{S^{*c}}\|_1> 3\|(\theta-\theta^*)_{S^*}\|_1\right\}\right) \\
\label{eq:use-comp} &\leq& \Gamma_S\left(\left\{\theta\in\Theta_S: \|\theta-\theta^*\|_1\leq \frac{4|S^*|^{1/2}\epsilon}{\|X\|\kappa(S^*)}\right\}\right) \\
\label{eq:drop-event} && + \Gamma_S\left(\left\{\theta\in\Theta_S: \|(\theta-\theta^*)_{S^{*c}}\|_1> 3\|(\theta-\theta^*)_{S^*}\|_1\right\}\right).
\end{eqnarray}
The inequality (\ref{eq:use-comp}) is by
$$\|\theta-\theta^*\|_1\leq 4\|(\theta-\theta^*)_{S^*}\|_1\leq \frac{4|S^*|^{1/2}\|X(\theta-\theta^*)\|}{\|X\|\kappa(S^*)}\leq \frac{4|S^*|^{1/2}\epsilon}{\|X\|\kappa(S^*)},$$
where we have used the definition of the compatibility constant $\kappa(S^*)$. We shall bound the two terms (\ref{eq:use-comp}) and (\ref{eq:drop-event}) separately. For (\ref{eq:use-comp}), we have
\begin{eqnarray*}
&& e^{\bar{\tau}\|X\|\|\theta^*\|_1}\Gamma_S\left(\left\{\theta\in\Theta_S: \|\theta-\theta^*\|_1\leq \frac{4|S^*|^{1/2}\epsilon}{\|X\|\kappa(S^*)}\right\}\right) \\
&=& \left(\frac{\bar{\tau}\|X\|}{2}\right)^s\int_{\|\theta-\theta^*\|_1\leq \frac{4|S^*|^{1/2}\epsilon}{\|X\|\kappa(S^*)}}\exp\left(\bar{\tau}\|X\|\|\theta^*\|_1-\bar{\tau}\|X\|\|\theta\|_1\right)d\theta_S \\
&\leq& \left(\frac{\bar{\tau}\|X\|}{2}\right)^s\int_{\|\theta-\theta^*\|_1\leq \frac{4|S^*|^{1/2}\epsilon}{\|X\|\kappa(S^*)}}\exp\left(\bar{\tau}\|X\|\|\theta-\theta^*\|_1\right)d\theta_S \\
&\leq& \left(\frac{\bar{\tau}\|X\|}{2}\right)^s\exp\left(\frac{8\bar{\tau}\sqrt{s^*}\epsilon}{\kappa(S^*)}\right)\int_{\|\theta-\theta^*\|_1\leq \frac{4|S^*|^{1/2}\epsilon}{\|X\|\kappa(S^*)}}\exp\left(-\bar{\tau}\|X\|\|\theta-\theta^*\|_1\right)d\theta_S \\
&\leq& \exp\left(\frac{8\bar{\tau}\sqrt{s^*}\epsilon}{\kappa(S^*)}\right)\left(\frac{\bar{\tau}\|X\|}{2}\right)^s\int\exp\left(-\bar{\tau}\|X\|\|\theta\|_1\right)d\theta_S \\
&=& \exp\left(\frac{8\bar{\tau}\sqrt{s^*}\epsilon}{\kappa(S^*)}\right).
\end{eqnarray*}
We also bound (\ref{eq:drop-event}) by
\begin{eqnarray}
\nonumber && e^{\bar{\tau}\|X\|\|\theta^*\|_1}\Gamma_S\left(\left\{\theta\in\Theta_S: \|(\theta-\theta^*)_{S^{*c}}\|_1> 3\|(\theta-\theta^*)_{S^*}\|_1\right\}\right) \\
\nonumber &=& \left(\frac{\bar{\tau}\|X\|}{2}\right)^s\int_{\|(\theta-\theta^*)_{S^{*c}}\|_1> 3\|(\theta-\theta^*)_{S^*}\|_1}\exp\left(\bar{\tau}\|X\|\|\theta^*\|_1-\bar{\tau}\|X\|\|\theta\|_1\right)d\theta_S \\
\label{eq:cone-use} &\leq& \left(\frac{\bar{\tau}\|X\|}{2}\right)^s\int\exp\left(-\frac{\bar{\tau}\|X\|}{2}\|\theta-\theta^*\|_1\right)d\theta_S \\
\nonumber &\leq& \left(\frac{\bar{\tau}\|X\|}{2}\right)^s\int\exp\left(-\frac{\bar{\tau}\|X\|}{2}\|\theta\|_1\right)d\theta_S \\
\nonumber &=& 2^s,
\end{eqnarray}
where the inequality is by (\ref{eq:cone-use})
\begin{eqnarray*}
\|\theta^*\|_1 - \|\theta\|_1 &=& \|\theta^*_{S^*}\|_1 - \|\theta_{S^*}^*+(\theta-\theta^*)_{S^*}\|_1 - \|(\theta-\theta^*)_{S^{*c}}\|_1 \\
&\leq& \|(\theta-\theta^*)_{S^*}\|_1- \|(\theta-\theta^*)_{S^{*c}}\|_1 \\
&=& \frac{3}{2}\|(\theta-\theta^*)_{S^*}\|_1 - \frac{1}{2}\|(\theta-\theta^*)_{S^{*c}}\|_1 - \frac{1}{2}\|\theta-\theta^*\|_1 \\
&\leq& - \frac{1}{2}\|\theta-\theta^*\|_1.
\end{eqnarray*}
Combine the two bounds above, we have
\begin{equation}
e^{\bar{\tau}\|X\|\|\theta^*\|_1}\Gamma_S\left(\left\{\theta\in\Theta_S: \|X(\theta-\theta^*)\|^2\leq\epsilon^2\right\}\right) \leq 2^s + \exp\left(\frac{8\bar{\tau}\sqrt{s^*}\epsilon}{\kappa(S^*)}\right).\label{eq:num-up}
\end{equation}
Next, we analyze the denominator. For any $\theta\in\Theta_{S^*}$, we have
$$\|X(\theta-\theta^*)\|\leq \|X\|\|\theta-\theta^*\|_1 \leq s^*\|X\|\|\theta-\theta^*\|_{\infty}.$$
Then,
\begin{eqnarray*}
&& e^{\bar{\tau}\|X\|\|\theta^*\|_1}\Gamma_{S^*}\left(\left\{\theta\in\Theta_{S^*}:\|X(\theta-\theta^*)\|^2\leq |S^*|\log p\right\}\right) \\
&=& \left(\frac{\bar{\tau}\|X\|}{2}\right)^{s^*}\int_{\|X(\theta-\theta^*)\|^2\leq s^*\log p}\exp\left(\bar{\tau}\|X\|\|\theta^*\|_1-\bar{\tau}\|X\|\|\theta\|_1\right)d\theta_{S^*} \\
&\geq& \left(\frac{\bar{\tau}\|X\|}{2}\right)^{s^*}\int_{s^*\|X\|^2\|\theta-\theta^*\|_{\infty}^2\leq \log p}\exp(-\bar{\tau}\|X\|\|\theta-\theta^*\|_1)d\theta_{S^*} \\
&=& \left(\frac{\bar{\tau}\|X\|}{2}\int_{-\sqrt{\frac{\log p}{s^*\|X\|^2}}}^{\sqrt{\frac{\log p}{s^*\|X\|^2}}}\exp\left(-\bar{\tau}\|X\||t|\right)dt\right)^{s^*} \\
&=& \left(\frac{1}{2}\int_{-\bar{\tau}\sqrt{\frac{\log p}{s^*}}}^{\bar{\tau}\sqrt{\frac{\log p}{s^*}}}e^{-|t|}dt\right)^{s^*}.
\end{eqnarray*}
For $\bar{\tau}=p^{-\zeta}$, we have
$$\frac{1}{2}\int_{-\bar{\tau}\sqrt{\frac{\log p}{s^*}}}^{\bar{\tau}\sqrt{\frac{\log p}{s^*}}}e^{-|t|}dt\geq \frac{1}{2}\int_{-\bar{\tau}\sqrt{\frac{\log p}{p}}}^{\bar{\tau}\sqrt{\frac{\log p}{p}}}e^{-|t|}dt \geq \bar{\tau}\sqrt{\frac{\log p}{p}} e^{-\bar{\tau}\sqrt{\frac{\log p}{p}}}\geq \exp\left(-\frac{C_2}{2}\log p\right),$$
for some constant $C_2$ depending on $\zeta$. Therefore,
\begin{equation}
e^{\bar{\tau}\|X\|\|\theta^*\|_1}\Gamma_{S^*}\left(\left\{\theta\in\Theta_{S^*}:\|X(\theta-\theta^*)\|^2\leq |S^*|\log p\right\}\right) \geq \exp\left(-\frac{C_2}{2}s^*\log p\right).\label{eq:den-low}
\end{equation}
Combine the two bounds (\ref{eq:num-up}) and (\ref{eq:den-low}), and we have
\begin{eqnarray*}
&& \frac{\Gamma_S\left(\left\{\theta\in\Theta_S: \|X(\theta-\theta^*)\|^2\leq\epsilon^2\right\}\right)}{\Gamma_{S^*}\left(\left\{\theta\in\Theta_{S^*}:\|X(\theta-\theta^*)\|^2\leq |S^*|\log p\right\}\right)} \\
&\leq& \frac{2^s + \exp\left(\frac{8\bar{\tau}\sqrt{s^*}\epsilon}{\kappa(S^*)}\right)}{\exp\left(-\frac{C_2}{2}s^*\log p\right)} \\
&\leq& \frac{2^s + \exp\left(\bar{\tau}\epsilon^2 + \frac{16\bar{\tau}s^*}{\kappa(S^*)}\right)}{\exp\left(-\frac{C_2}{2}s^*\log p\right)} \\
&\leq& \exp\left(\bar{\tau}\epsilon^2 + C_2(s\log p+ s^*\log p)\right),
\end{eqnarray*}
where the last inequality uses the condition that $\bar{\tau}/\kappa(S^*)\leq 1$. The proof is complete.
\end{proof}

\begin{proof}[Proof of Lemma \ref{lem:slm-test}]
The assumption (\ref{eq:G-vdV-Z}) implies that
\begin{equation}
\sum_{\lambda\in\Lambda}\exp\left(-\epsilon(\Z_{\lambda})^2\right) \leq \sum_{t=1}^{\infty}te^{-(t-1)} \leq \int_0^{\infty}te^{-(t-1)}dt = e. \label{eq:sum-to-int}
\end{equation}
By the same argument used in the proof of Lemma \ref{lem:spsl-reg-test}, we have $\mathbb{P}(\chi_d^2>t)\leq \exp\left(\frac{2}{3}d-\frac{t}{3}\right)$ for any $t>0$.
For the Type-1 error, we have
\begin{eqnarray*}
P_{\X_{Z^*}(B^*)}\phi &\leq& \sum_{\lambda\in\Lambda}\sum_{\lambda\in\bar{\Z}_{\lambda}}P_{\X_{Z^*}(B^*)}\left(\|P_{Z\cup Z^*}(Y-\X_{Z^*}(B^*))\|^2>6\left(\epsilon(\Z_{\lambda})^2+\epsilon(\Z_{\lambda^*})^2\right)\right) \\
&\leq& \sum_{\lambda\in\Lambda}\sum_{\lambda\in\bar{\Z}_{\lambda}}\mathbb{P}\left(\chi^2_{\ell(\Z_{\lambda})+\ell(\Z_{\lambda^*})}>6\left(\epsilon(\Z_{\lambda})^2+\epsilon(\Z_{\lambda^*})^2\right)\right) \\
&\leq& \sum_{\lambda\in\Lambda}\exp\left(\log|\Z_{\lambda}|+\frac{2}{3}\left(\ell(\Z_{\lambda})+\ell(\Z_{\lambda^*})\right)-2\left(\epsilon(\Z_{\lambda})^2+\epsilon(\Z_{\lambda^*})^2\right)\right) \\
&\leq& \exp\left(-\epsilon(\Z_{\lambda^*})^2\right)\sum_{\lambda\in\Lambda}\exp\left(-\epsilon(\Z_{\lambda})^2\right) \\
&\leq& \exp\left(-\epsilon(\Z_{\lambda^*})^2+1\right).
\end{eqnarray*}
Now we analyze the Type-2 error. For any $\theta\in\Theta_Z$ with $Z\in\bar{\Z}_{\lambda}$, there exists some $B\in\mathbb{R}^{\ell(\Z_{\lambda})}$ such that $\theta=\X_Z(B)$. Thus,
\begin{eqnarray*}
P_{\theta}(1-\phi) &\leq& P_{\X_Z(B)}(1-\phi_Z) \\
&\leq& P_{\X_Z(B)}\left(\|\X_Z(B)-\X_{Z^*}(B^*)\|^2-\frac{1}{2}\|P_{Z\cup Z^*}(Y-\X_{Z}(B))\|^2\leq 6\left(\epsilon(\Z_{\lambda})^2+\epsilon(\Z_{\lambda^*})^2\right)\right) \\
&\leq& \mathbb{P}\left(\chi^2_{\ell(\Z_{\lambda})+\ell(\Z_{\lambda^*})}>2\epsilon^2 - 12\left(\epsilon(\Z_{\lambda})^2+\epsilon(\Z_{\lambda^*})^2\right)\right) \\
&\leq& \exp\left(-\frac{2}{3}\epsilon^2 + 5\left(\epsilon(\Z_{\lambda})^2+\epsilon(\Z_{\lambda^*})^2\right)\right).
\end{eqnarray*}
The proof is complete.
\end{proof}

\begin{proof}[Proof of Lemma \ref{lem:slm-pm}]
We can write
\begin{eqnarray*}
&& \frac{\Gamma_Z\left(\left\{\theta\in\Theta_Z: \|\theta-\X_{Z^*}(B^*)\|^2\leq \epsilon^2\right\}\right)}{\Gamma_{Z^*}\left(\left\{\theta\in\Theta_{Z^*}: \|\theta-\X_{Z^*}(B^*)\|^2\leq \epsilon(Z^*)^2\right\}\right)} \\
&=& \frac{e^{\tau\|\X_{Z^*}(B^*)\|}\Gamma_Z\left(\left\{\theta\in\Theta_Z: \|\theta-\X_{Z^*}(B^*)\|^2\leq \epsilon^2\right\}\right)}{e^{\tau\|\X_{Z^*}(B^*)\|}\Gamma_{Z^*}\left(\left\{\theta\in\Theta_{Z^*}: \|\theta-\X_{Z^*}(B^*)\|^2\leq \epsilon(\Z_{\lambda^*})^2\right\}\right)},
\end{eqnarray*}
and we will analyze the numerator and the denominator separately. To facilitate the analysis for the numerator, we introduce the object
$$\bar{B}_Z=\argmin_{B\in\mathbb{R}^{\ell(\Z_{\lambda})}}\|\X_Z(B)-\X_{Z^*}(B^*)\|^2.$$
The property of least-squares implies the following Pythagorean identity,
$$\|\X_Z(B)-\X_{Z^*}(B^*)\|^2 = \|\X_Z(B)-\X_Z(\bar{B}_Z)\|^2 + \|\X_Z(\bar{B}_Z)-\X_{Z^*}(B^*)\|^2.$$
This implies
\begin{equation}
\|\X_Z(B)-\X_{Z^*}(B^*)\|^2 \geq \|\X_Z(B)-\X_Z(\bar{B}_Z)\|^2. \label{eq:my-old-stuff}
\end{equation}
Now we bound the numerator by
\begin{eqnarray}
\nonumber && e^{\tau\|\X_{Z^*}(B^*)\|}\Gamma_Z\left(\left\{\theta\in\Theta_Z: \|\theta-\X_{Z^*}(B^*)\|^2\leq \epsilon^2\right\}\right) \\
\nonumber &=& \frac{\sqrt{\det(\X_Z^T\X_Z)}}{2}\left(\frac{\tau}{\sqrt{\pi}}\right)^{\ell(\Z_{\lambda})}\frac{\Gamma(\ell(\Z_{\lambda})/2)}{\Gamma(\ell(\Z_{\lambda}))}\\
\nonumber && \times \int_{B\in\mathbb{R}^{\ell(\Z_{\lambda})}:\|\X_Z(B)-\X_{Z^*}(B^*)\|^2\leq\epsilon^2}e^{\tau\|\X_{Z^*}(B^*)\|-\tau\|\X_Z(B)\|}dB \\
\nonumber &\leq& e^{\tau\epsilon}\frac{\sqrt{\det(\X_Z^T\X_Z)}}{2}\left(\frac{\tau}{\sqrt{\pi}}\right)^{\ell(\Z_{\lambda})}\frac{\Gamma(\ell(\Z_{\lambda})/2)}{\Gamma(\ell(\Z_{\lambda}))}\int_{B\in\mathbb{R}^{\ell(\Z_{\lambda})}:\|\X_Z(B)-\X_{Z^*}(B^*)\|^2\leq\epsilon^2}dB \\
\label{eq:use-Py} &\leq& e^{\tau\epsilon}\frac{\sqrt{\det(\X_Z^T\X_Z)}}{2}\left(\frac{\tau}{\sqrt{\pi}}\right)^{\ell(\Z_{\lambda})}\frac{\Gamma(\ell(\Z_{\lambda})/2)}{\Gamma(\ell(\Z_{\lambda}))}\int_{B\in\mathbb{R}^{\ell(\Z_{\lambda})}:\|\X_Z(B)-\X_{Z}(\bar{B}_Z)\|^2\leq\epsilon^2}dB \\
\label{eq:change-var}&=& \frac{1}{2}e^{\tau\epsilon}\left(\frac{\tau}{\sqrt{\pi}}\right)^{\ell(\Z_{\lambda})}\frac{\Gamma(\ell(\Z_{\lambda})/2)}{\Gamma(\ell(\Z_{\lambda}))}\text{Vol}\left(\left\{B\in\mathbb{R}^{\ell(\Z_{\lambda})}:\|B\|\leq \epsilon\right\}\right) \\
\nonumber &\leq& \frac{\Gamma(\ell(\Z_{\lambda})/2)}{\Gamma(\ell(\Z_{\lambda}))}e^{\tau\epsilon}(2e)^{\ell(\Z_{\lambda})/2}\exp\left(\frac{1}{2}\ell(\Z_{\lambda})\log\frac{2\pi e\epsilon^2}{\ell(\Z_{\tau})}\right) \\
\nonumber &\leq& \frac{\Gamma(\ell(\Z_{\lambda})/2)}{\Gamma(\ell(\Z_{\lambda}))}e^{\tau\epsilon}(2e)^{\ell(\Z_{\lambda})/2}\exp\left(\ell(\Z_{\lambda})\log\frac{\sqrt{2\pi e}\epsilon}{\sqrt{\ell(\Z_{\tau})}}\right) \\
\label{eq:last-in} &\leq& \frac{\Gamma(\ell(\Z_{\lambda})/2)}{\Gamma(\ell(\Z_{\lambda}))}(2e)^{\ell(\Z_{\lambda})/2}\exp\left(\tau\epsilon + \sqrt{\ell(\Z_{\lambda})}\sqrt{2\pi e}\epsilon\right)\\
\nonumber &\leq& \frac{\Gamma(\ell(\Z_{\lambda})/2)}{\Gamma(\ell(\Z_{\lambda}))}e^{\tau\epsilon}(2e)^{\ell(\Z_{\lambda})/2}\exp\left(\ell(\Z_{\lambda})\log\frac{\sqrt{2\pi e}\epsilon}{\sqrt{\ell(\Z_{\tau})}}\right) \\
\nonumber &\leq& \frac{\Gamma(\ell(\Z_{\lambda})/2)}{\Gamma(\ell(\Z_{\lambda}))}(2e)^{\ell(\Z_{\lambda})/2}\exp\left(\frac{1}{12}\epsilon^2+3\tau^2+\frac{1}{12}\epsilon^2+6\pi e\ell(\Z_{\lambda})\right),
\end{eqnarray}
where (\ref{eq:use-Py}) is derived from (\ref{eq:my-old-stuff}), (\ref{eq:change-var}) is a standard change-of-variable argument, and the inequality (\ref{eq:last-in}) uses the fact that the function $\log x< x$ for any $x>0$. Then, we have
\begin{equation}
e^{\tau\|\X_{Z^*}(B^*)\|}\Gamma_Z\left(\left\{\theta\in\Theta_Z: \|\theta-\X_{Z^*}(B^*)\|^2\leq \epsilon^2\right\}\right) \leq \frac{\Gamma(\ell(\Z_{\lambda})/2)}{\Gamma(\ell(\Z_{\lambda}))}\exp\left(C_2\epsilon(\Z_{\lambda})^2 + \frac{1}{6}\epsilon^2\right), \label{eq:num-upp}
\end{equation}
for $C_2 > 3\tau^2+6\pi e$.
Similarly, for the denominator, we have
\begin{eqnarray*}
&& e^{\tau\|\X_{Z^*}(B^*)\|}\Gamma_{Z^*}\left(\left\{\theta\in\Theta_{Z^*}: \|\theta-\X_{Z^*}(B^*)\|^2\leq \epsilon(\Z_{\lambda^*})^2\right\}\right) \\
&\geq& \frac{\sqrt{\det(\X_{Z^*}^T\X_{Z^*})}}{2}\left(\frac{\tau}{\sqrt{\pi}}\right)^{\ell(\Z_{\lambda^*})}\frac{\Gamma(\ell(\Z_{\lambda^*})/2)}{\Gamma(\ell(\Z_{\lambda^*}))}\\
\nonumber && \times \int_{B\in\mathbb{R}^{\ell(\Z_{\lambda^*})}:\|\X_{Z^*}(B)-\X_{Z^*}(B^*)\|^2\leq \epsilon(\Z_{\lambda^*})^2}e^{\tau\|\X_{Z^*}(B^*)\|-\tau\|\X_{Z^*}(B)\|}dB \\
&\geq& \frac{1}{2}e^{\tau\epsilon(\Z_{\lambda^*})}\left(\frac{\tau}{\sqrt{\pi}}\right)^{\ell(\Z_{\lambda^*})}\frac{\Gamma(\ell(\Z_{\lambda^*})/2)}{\Gamma(\ell(\Z_{\lambda^*}))}\text{Vol}\left(\left\{B\in\mathbb{R}^{\ell(\Z_{\lambda^*})}:\|B\|^2\leq \epsilon(\Z_{\lambda^*})^2\right\}\right) \\
&\geq& \frac{\Gamma(\ell(\Z_{\lambda^*})/2)}{\Gamma(\ell(\Z_{\lambda^*}))}\frac{(2e\tau^2)^{\ell(\Z_{\lambda^*})/2}}{2\sqrt{\pi\ell(\Z_{\lambda^*})}}e^{\tau\epsilon(\Z_{\lambda^*})}\left(\frac{\epsilon(\Z_{\lambda^*})}{\sqrt{\ell(\Z_{\lambda^*})}}\right)^{\ell(\Z_{\lambda^*})} \\
&\geq& \frac{\Gamma(\ell(\Z_{\lambda^*})/2)}{\Gamma(\ell(\Z_{\lambda^*}))}\frac{(2e\tau^2)^{\ell(\Z_{\lambda^*})/2}}{2\sqrt{\pi\ell(\Z_{\lambda^*})}}e^{\tau\epsilon(\Z_{\lambda^*})^2},
\end{eqnarray*}
where the last inequality uses the fact that $\frac{\epsilon(\Z_{\lambda^*})^2}{\ell(\Z_{\lambda^*})}\geq 1$. Therefore,
\begin{equation}
e^{\tau\|\X_{Z^*}(B^*)\|}\Gamma_{Z^*}\left(\left\{\theta\in\Theta_{Z^*}: \|\theta-\X_{Z^*}(B^*)\|^2\leq \epsilon(\Z_{\lambda^*})^2\right\}\right)\geq \frac{\Gamma(\ell(\Z_{\lambda^*})/2)}{\Gamma(\ell(\Z_{\lambda^*}))}\exp(-C_2\epsilon(\Z_{\lambda^*})^2), \label{eq:den-loww}
\end{equation}
for $C_2\geq 2+\tau+\frac{1}{2}\left|\log(2e\tau^2)\right|$. Combine the two bounds (\ref{eq:num-upp}) and (\ref{eq:den-loww}), and we obtain
\begin{eqnarray*}
&& \frac{\Gamma_Z\left(\left\{\theta\in\Theta_Z: \|\theta-\X_{Z^*}(B^*)\|^2\leq \epsilon^2\right\}\right)}{\Gamma_{Z^*}\left(\left\{\theta\in\Theta_{Z^*}: \|\theta-\X_{Z^*}(B^*)\|^2\leq \epsilon(Z^*)^2\right\}\right)} \\
&\leq& \frac{\frac{\Gamma(\ell(\Z_{\lambda})/2)}{\Gamma(\ell(\Z_{\lambda}))}}{\frac{\Gamma(\ell(\Z_{\lambda^*})/2)}{\Gamma(\ell(\Z_{\lambda^*}))}}\exp\left(\frac{1}{6}\epsilon^2 + C_2\left(\epsilon(\Z_{\lambda^*})^2+\epsilon(\Z_{\lambda})^2\right)\right),
\end{eqnarray*}
for some $C_2>0$.
\end{proof}

\begin{small}
\bibliographystyle{plainnat}
\bibliography{reference}
\end{small}

%\newpage
%\appendix

%\begin{center}
%{\Large Supplement to ``Convergence Rates of Variational Posterior''}\\
%~\\
%Chao Gao$^1$, Fengshuo Zhang$^1$\\
%~\\
%$^1$University of Chicago
%\end{center}

%\input{supplementary}

%\end{raggedright}           % Comment this out if you don't want ragged edges.

\end{document}